\title{Lagrangian Dual Sections: A Topological Perspective on Hidden Convexity}
\author{
    Venkat Chandrasekaran\footnote{California Institute of Technology, Department of Computing and Mathematical Sciences and Department of Electrical Engineering, \texttt{venkatc@caltech.edu}}
    \and
    Timothy Duff\footnote{University of Missouri-Columbia, Department of Mathematics, \texttt{tduff@missouri.edu}}
    \and
    Jose Israel Rodriguez\footnote{University of Wisconsin-Madison, Department of Mathematics \texttt{jose@math.wisc.edu}}
    \and
    Kevin Shu\footnote{California Institute of Technology, Department of Computing and Mathematical Sciences, \texttt{kshu@caltech.edu}}
}
\numberwithin{equation}{section}
\newcommand{\tr}{\mathrm{Tr}}
\newcommand{\cB}{\mathcal{B}}
\newcommand{\inclu}[0] {\ar@{^{(}->}}
\newcommand{\diag}{{\rm diag}}
\newcommand{\dist}{{\rm dist}}
\newcommand{\R}{\mathbb{R}}
\newcommand{\Z}{\mathbb{Z}}
\newcommand{\N}{\mathbb{N}}
\newcommand{\cN}{\mathcal{N}}
\newcommand{\PP}{\mathbb{P}}
\newcommand{\SO}{\text{SO}}
\DeclareMathOperator*{\conv}{conv}
\newcommand{\Diag}{\text{Diag}}
\newcommand{\argmin}{\operatornamewithlimits{argmin}}
\newcommand{\argmax}{\operatornamewithlimits{argmax}}
\newtheorem{thm}{Theorem}[section]
\newtheorem{theorem}{Theorem}[section]
\newtheorem{lemma}{Lemma}[section]
\newtheorem{corollary}{Corollary}[section]
\newtheorem{definition}{Definition}[section]
\newtheorem{proposition}[thm]{Proposition}
\newtheorem{remark}{Remark}
\newtheorem{example}{Example}[section]
\newcommand{\cL}{\mathcal{L}}
\newcommand{\Rplus}{\R_{0+}}
\newcommand{\cM}{{\mathcal{M}}}
\renewcommand{\cB}{\mathcal{B}}
\newcommand{\Msing}{\cM^{(n,m)}_{\sigma}}
\newcommand{\cC}{\mathcal{C}}
\newcommand{\Voptf}{V_{f}}
\newcommand{\MR}{ \Rplus^{k+1} \times \mathcal{M} }  
\newcommand{\subspace}{\mathcal{S}}
\DeclareMathOperator{\St}{St}
\DeclareMathOperator{\Gr}{Gr}
\algrenewcommand\algorithmicrequire{\textbf{Input:}}
\algrenewcommand\algorithmicensure{\textbf{Output:}}
\newcommand{\demph}[1]{\emph{{\color{RoyalBlue}#1}}}
\newcommand{\joseCut}[1]{}
\begin{document}
\maketitle
\begin{abstract}
    Hidden convexity is a powerful idea in optimization: under the right transformations, nonconvex problems that are seemingly intractable can be solved efficiently using convex optimization. 
    We introduce the notion of a \emph{Lagrangian dual section} of a nonlinear program defined over a topological space, and we use it to give a sufficient condition for a nonconvex optimization problem to have a natural convex reformulation.
    We emphasize the topological nature of our framework, using only continuity and connectedness properties of a certain Lagrangian formulation of the problem to prove our results.  We demonstrate the practical consequences of our framework in a range of applications and by developing new computational methodology.
    First, we present families of nonconvex problem instances that can be transformed to convex programs in the context of linear inverse spectral problems -- which include quadratically constrained quadratic optimization and Stiefel manifold optimization as special cases -- as well as unbalanced Procrustes problems.
    In each of these applications, we both generalize prior results on hidden convexity and provide unifying proofs.  For the case of the linear inverse spectral problems, 
    we also present a Lie-theoretic approach that illustrates connections with the Kostant convexity theorem.
    Second, we present novel algorithmic ideas that can be used to find globally optimal solutions to both Lagrangian forms of an optimization problem as well as constrained optimization problems when the underlying topological space is a Riemannian manifold.
\end{abstract}
\textbf{Keywords: }
    Kostant convexity, noncrossing subspaces, Procrustes problems, quadratic programming, Radon-Hurwitz numbers, Riemannian optimization, semidefinite relaxation, Stiefel manifold optimization

\section{Introduction}

In global optimization, a major distinction has been made between `convex' and `nonconvex' optimization problems for decades \cite{rockafellar1993lagrange}.
While there is a rich theory for how to solve convex optimization problems to global optimality, the analogous theories for nonconvex optimization are distinctly weaker and more ad hoc.
However, convexity is not invariant under changes of variables, meaning that a problem may be convex in one presentation, but not in another.
Hence, it is of interest to identify situations in which a given problem can be \emph{reparametrized} in such a way that the resulting problem is a convex optimization problem in the new parameters. The phenomenon of problems having such a convex reparametrization is also referred to as `hidden convexity' in the literature.

In studying this phenomenon, it is helpful to identify properties of convex optimization problems which will continue to hold under reparametrization.  One such property is a tendency for \emph{parametric families} of convex  problems -- such as those derived from a Lagrangian formulation of a convex nonlinear program -- to have solutions which vary continuously with the parameter.  This property is prominently exploited by a number of optimization algorithms such as interior-point methods and augmented Lagrangian methods \cite{nocedal2006numerical}.  In particular, these procedures solve a given convex program by instead solving a sequence of parametrized problems by slowly varying the underlying parameter, and leveraging the continuity of the associated solution map to approach the solution of the original program.

In this paper, we show a sort of converse to this property -- if there exist continuously-varying solutions to a certain parametric family of problems associated to a constrained (not necessarily convex) nonlinear program, then we can provide a natural convex reformulation of the constrained program. In particular, this property can be defined over an arbitrary topological space, as the definition only involves topological aspects of the program. We show the practical utility of our results by exhibiting many examples satisfying our criterion arising from fundamental optimization problems, such as quadratically constrained quadratic optimization problems and Stiefel manifold optimization problems. Our methods yield new results on the exactness of semidefinite programming (SDP) relaxations for these problems. We also give applications to the less well-understood unbalanced Procrustes problem. 

We will formalize the notion of a continuous family of solutions to an optimization problem using the notion of a \demph{bundle}, which is ubiquitous in topology and geometry. To a constrained (possibly nonconvex) optimization problem, we associate a \demph{Lagrangian dual bundle}, which describes how the maximizer of a Lagrangian formulation of the problem varies as the Lagrange multipliers change. We then show how properties of that bundle can have geometric consequences for convex reparametrizations and the development of effective algorithms.

\subsection{Preliminaries and Main Result}
We give here the general setting of our analysis, and state our main result. We fix a topological space $\cM$ to be the domain of our optimization problem.
As examples, $\cM$ may be $\R^n$ with the Euclidean topology, a manifold, or an algebraic variety.
For our purposes, a constrained optimization problem on $\cM$ is specified by a continuous function $f : \cM \rightarrow \R^{k+1}$ for some $k \in \N$ and a constraint vector $c \in \R^k$:
\begin{equation}
     \label{eq:constrained} 
    V_f(c) = \max \{f_0(x) : f_1(x) = c_1, \dots, f_k(x) =  c_k, x \in \cM\}.\tag{opt-Main}
\end{equation}
We refer to $V_f$ as the \demph{optimal value function} of $f$. We assign the value $-\infty$ to $\Voptf(c)$ in case $\{x \in \cM : f_1(x) = c_1, \dots, f_k(x) =  c_k\}$ is empty.

\newcommand{\tDomain}{\Rplus^{k+1}}
\newcommand{\tDomainSmooth}{\R^{k+1}}

The continuous function 
$f:\cM\rightarrow \R^{k+1}$ also induces the~\demph{Lagrangian}: 
\begin{align}
\cL:\tDomain\times \cM\to\mathbb{R},\quad  \cL(t,x) = \langle t, f(x)\rangle, 
\end{align}
where $\langle \cdot, \cdot \rangle$ denotes the usual inner product on Euclidean space
and 
\[\tDomain := \{t \in \R^{k+1} : t_0 \ge 0, t \neq 0\}.\]
The  \demph{Lagrangian dual bundle} is defined as the following subset of the domain of $\mathcal{L}$: 
\begin{equation}
    \label{eq:bundle}
    \cB := \left\{(t,x) : x \in \argmax_{z \in \cM} \langle t, f(z)\rangle \right\} \subseteq \MR.
\end{equation}
We define a \demph{Lagrangian dual section of $\cB$} to be a continuous function $D : \tDomain \rightarrow \cM$ with  the property that $(t,D(t)) \in \cB$ for all $t \in \tDomain$. That is, for each $t \in \tDomain$, $D(t)$ is a maximizer of the function \begin{equation}
\ell_t:\cM\to \mathbb{R},\quad
\ell_t(x)=\langle t, f(x)\rangle,
\end{equation}
for all $t \in \tDomain$.\footnote{In other areas of mathematics, the data of a bundle would also involve a map $\pi : \cB \rightarrow \tDomain$, and a section would be right inverse of $\pi$. Our setup is equivalent, but simplifies the concept for our purposes.
In \cite{kevinthesis}, a related notion was defined, where it was termed `continuous maximization'.} 
If a Lagrangian dual section exists, we say the function $f$ is \demph{Lagrangian dual sectioned}, or simply \demph{sectioned}. 

Our main theorem shows a connection between the property of $f$ being sectioned and a convex reparametrization of the optimization problem defined by $V_f$:

\begin{theorem}
    \label{thm:hidden_cvx}
    If $\cM$ is compact and  $f : \cM \rightarrow \R^{k+1}$ is Lagrangian dual sectioned, then
    the optimal value function $\Voptf$ is concave.
\end{theorem}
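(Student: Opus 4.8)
The plan is to show that the hypograph of $V_f$ is convex, which is equivalent to $V_f$ being concave. The key observation is that the Lagrangian $\mathcal{L}(t,x) = \langle t, f(x)\rangle$ gives rise, via the section $D$, to a family of supporting hyperplanes to the hypograph. Concretely, for each $t = (t_0, t_1, \dots, t_k) \in \Rplus^{k+1}$ with $t_0 > 0$, and any $x \in \cM$ with $f_1(x) = c_1, \dots, f_k(x) = c_k$, we have $t_0 f_0(x) = \langle t, f(x)\rangle - \langle (0, t_1, \dots, t_k), (c_1, \dots, c_k)\rangle$ (abusing notation slightly for the last inner product). Maximizing over feasible $x$, and using that $\langle t, f(z)\rangle \le \langle t, f(D(t))\rangle$ for all $z \in \cM$, one gets the weak-duality bound
\begin{equation}
t_0 V_f(c) \le \langle t, f(D(t))\rangle - \sum_{i=1}^k t_i c_i \qu{} \text{ for all } c,
\end{equation}
where the right-hand side is an affine function of $c$. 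So every such $t$ with $t_0 > 0$ yields an affine majorant of $V_f$; if I can show that for each $c$ in the domain of $V_f$ this bound is \emph{tight} for some choice of $t$ with $t_0 > 0$, then $V_f$ is the infimum of affine functions and hence concave.

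**First I would** handle compactness: since $\cM$ is compact and $f$ continuous, $\ell_t$ attains its maximum on $\cM$, so $\cB$ is nonempty over all of $\tDomain$ — this is what makes the section a sensible object — and moreover $V_f(c)$ is attained whenever the feasible set is nonempty. **Next**, the crux is the tightness claim: given $c$ with $V_f(c) > -\infty$, produce $t$ with $t_0 > 0$ such that $D(t)$ is feasible for the constraint vector $c$ (i.e. $f_i(D(t)) = c_i$ for $i = 1, \dots, k$) and $D(t)$ is optimal. The natural approach is a topological/degree-theoretic or connectedness argument on the map $t \mapsto (f_1(D(t)), \dots, f_k(D(t)))$, restricted to the slice $\{t_0 = 1\} \cong \R^k$: I want to show its image covers (the relevant portion of) the set of achievable constraint vectors. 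The continuity of $D$ is exactly what powers this. Here is where I would lean on a connectedness argument — the set of $c$ for which tightness holds should be shown to be both relatively open and relatively closed in (the interior of) the domain of $V_f$, or alternatively one invokes that $t \mapsto f(D(t))$ traces out the boundary of the convex hull of $f(\cM)$, which by a supporting-hyperplane argument meets every supporting direction with $t_0 > 0$.

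**The hard part will be** precisely this surjectivity/tightness step: continuity of $D$ alone does not immediately give that every feasible $c$ arises as $(f_1(D(t)), \dots, f_k(D(t)))$ for some $t$ with $t_0 > 0$; one has to rule out that the section ``skips over'' certain constraint values or that tightness is only achieved in the limit $t_0 \to 0$. I expect the argument to proceed by first establishing the result on the interior of $\dom V_f$ (where a strictly feasible or interior-point-type condition lets one take $t_0 > 0$ strictly), using a homotopy/intermediate-value argument on the path of sections, and then extending to the boundary by continuity of $V_f$ from the interior (a concave function on a convex set is determined by its values on the interior, so once $V_f$ is concave on the interior and upper semicontinuous, we are done). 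A subtlety worth flagging: the boundary of $\tDomain$ where $t_0 = 0$ corresponds to directions that do not ``see'' the objective, so these do not produce useful affine majorants — the whole point is that the section, being defined and continuous on all of $\tDomain$ including near $t_0 = 0$, forces enough rigidity that the $t_0 > 0$ part already suffices.
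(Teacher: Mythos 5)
Your high-level strategy mirrors the paper's: every $t \in \tDomain$ with $t_0 > 0$ yields, via weak duality, an affine majorant of $V_f$, and the task is to prove these majorants are tight by showing that $t \mapsto (f_1(D(t)), \dots, f_k(D(t)))$ hits every achievable constraint vector. The paper carries this out by proving that the map $\phi(t) = \fminuszero(D(t))$ is surjective from the normalized hemisphere $H_k = \{t \in \tDomain : \|t\| = 1\}$ (homeomorphic to the ball $B^k$) onto $\cC := \conv\fminuszero(\cM)$ (also homeomorphic to $B^k$), via a Brouwer-type argument in singular homology: one shows the boundary restriction $\partial\phi : S^{k-1} \to \partial\cC$ maps sphere to boundary and is a homotopy equivalence, and then a homology computation forces surjectivity. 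Your instinct toward a ``degree-theoretic'' argument is the right one; the open/closed or connectedness alternatives you also float only work cleanly when $k = 1$, which the paper treats separately in \cref{thm:connected_fibers}.

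There are two genuine gaps. First, the surjectivity step is only gestured at: ``homotopy/intermediate-value argument'' is not a proof for $k > 1$, and this is precisely where the paper's homology computation does real work. Second, and more substantively, the extension-to-boundary step is invalid as stated. You write that ``a concave function on a convex set is determined by its values on the interior, so once $V_f$ is concave on the interior and upper semicontinuous, we are done.'' The standard extension result you have in mind requires \emph{lower} semicontinuity (closedness), whereas $V_f$ over compact $\cM$ is \emph{upper} semicontinuous, which gives $V_f(c) \ge \limsup_{c' \to c} V_f(c')$ and therefore does not exclude $V_f$ jumping \emph{up} at boundary points of $\cC$ --- exactly the failure mode that would break concavity. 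The paper avoids this by proving surjectivity of $\phi$ over the \emph{entire} hemisphere $H_k$, including its boundary $S^{k-1}$, so that boundary constraint vectors $c \in \partial\cC$ are also realized by some $t$, and then handling the resulting $t_0 = 0$ cases directly via the limiting multipliers $\lambda^{(j)} = j\,(t_1, \dots, t_k)$ with $j \to \infty$, rather than by an abstract continuity-extension argument. Your weak-duality inequality and the tightness logic given a $t$ with $t_0 > 0$ and $\fminuszero(D(t)) = c$ are both correct; what is missing is a complete surjectivity argument and a correct treatment of $\partial\cC$.
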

We prove this result in \cref{subsec:main_proof}.
An easy corollary of this result is the existence of a reformulation of the optimization problem \eqref{eq:constrained} as a convex optimization problem; we will include a proof in \cref{app:proofs} for completeness.

\begin{corollary}
    \label{cor:convex_reform}
    If $\cM$ is compact and  $f : \cM \rightarrow \R^{k+1}$ is Lagrangian dual sectioned, then for any $c \in \R^k$,
    \[
        \Voptf(c) = \max \{y_0 : y_1 = c_1, \dots, y_k = c_k, y \in \conv(f(\cM))\}.
    \]
\end{corollary}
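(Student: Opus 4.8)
The plan is to establish the two inequalities between $\Voptf(c)$ and $M(c) := \max\{y_0 : y_1 = c_1, \dots, y_k = c_k,\ y \in \conv(f(\cM))\}$, where only the inequality $M(c) \le \Voptf(c)$ will use the sectioning hypothesis — via the concavity of $\Voptf$ supplied by \cref{thm:hidden_cvx}. Write $K := \conv(f(\cM)) \subseteq \R^{k+1}$ and let $\proj : \R^{k+1} \to \R^k$ drop the $0$th coordinate. As a preliminary, $f(\cM)$ is compact (continuous image of a compact space), so by Carathéodory's theorem $K$ is compact as well; hence for each $c$ the constraint slice $\{y \in K : \proj(y) = c\}$ is compact, the maximum defining $M(c)$ is attained when the slice is nonempty, and $M(c) = -\infty$ otherwise, matching our convention for $\Voptf$. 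The inequality $\Voptf(c) \le M(c)$ is then immediate and uses nothing about compactness or sectioning: any $x \in \cM$ feasible for \eqref{eq:constrained} at $c$ gives $y = f(x) \in f(\cM) \subseteq K$ with $\proj(y) = c$ and $y_0 = f_0(x)$, so taking the supremum over such $x$ yields the claim (with both sides $-\infty$ if the feasible set is empty).

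For the reverse inequality $M(c) \le \Voptf(c)$, fix any $y \in K$ with $\proj(y) = c$ (if there is none, $M(c) = -\infty$ and there is nothing to prove). By Carathéodory we may write $y = \sum_j \lambda_j f(x^{(j)})$ for finitely many $x^{(j)} \in \cM$ and weights $\lambda_j \ge 0$ with $\sum_j \lambda_j = 1$. Each $x^{(j)}$ is feasible for \eqref{eq:constrained} at the parameter $\proj(f(x^{(j)}))$, so $\Voptf\big(\proj(f(x^{(j)}))\big) \ge f_0(x^{(j)})$; in particular each of these values is finite. Since $\sum_j \lambda_j \proj(f(x^{(j)})) = \proj(y) = c$, the defining inequality of concavity for $\Voptf$, iterated to finite convex combinations (\cref{thm:hidden_cvx}), gives
\[
    y_0 = \sum_j \lambda_j f_0(x^{(j)}) \;\le\; \sum_j \lambda_j \Voptf\big(\proj(f(x^{(j)}))\big) \;\le\; \Voptf\Big(\sum_j \lambda_j \proj(f(x^{(j)}))\Big) = \Voptf(c).
\]
Taking the maximum over all admissible $y$ yields $M(c) \le \Voptf(c)$, and combined with the first inequality this proves $\Voptf(c) = M(c)$.

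The only substantive ingredient is \cref{thm:hidden_cvx}; everything else is the routine passage between a compact set and its convex hull, so there is no serious obstacle here — the argument is essentially the observation that $M$ is the concave envelope of $\Voptf$, which must equal $\Voptf$ once the latter is known to be concave. The one place that rewards a little care is the bookkeeping with the value $-\infty$: the points $\proj(f(x^{(j)}))$ entering Jensen's inequality are automatically in the region where $\Voptf$ is finite (being bounded below by $f_0(x^{(j)})$), so the two-point concavity inequality of \cref{thm:hidden_cvx} legitimately iterates to the displayed finite-combination form, and the compactness of $K$ is what lets us write ``$\max$'' rather than ``$\sup$'' in the statement.
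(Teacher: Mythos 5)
Your proof is correct, and the core idea is the same as the paper's: the easy inequality $\Voptf(c) \le M(c)$ follows from $f(\cM) \subseteq \conv f(\cM)$, and the reverse inequality follows because concavity of $\Voptf$ (from \cref{thm:hidden_cvx}) forces $\conv f(\cM)$ to lie in the hypograph of $\Voptf$. Where the paper phrases the reverse inequality abstractly (hypograph is convex, hence contains $\conv f(\cM)$), you unpack the same containment concretely via Carathéodory and a finite Jensen inequality — this is a faithful expansion of the same step, not a different route.

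Two small merits of your version worth noting. First, the paper's appendix proof as written contains a superfluous passage invoking linearity of $f$ (``since $\barf$ is a linear function, $\conv(f(\cM)) = \barf(\conv(\cM))$''); linearity of $f$ is not a hypothesis of \cref{cor:convex_reform} (only of \cref{prop:linear_proj}), and this step is unnecessary once one has the hypograph containment — your argument correctly avoids it. Second, your careful handling of the $-\infty$ cases (noting that each $\proj(f(x^{(j)}))$ lies in the region where $\Voptf$ is finite) makes the Jensen step fully rigorous, a detail the paper leaves implicit.
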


The concavity for $\Voptf$ enables us to use the tools of convex optimization to approach the solution of \eqref{eq:constrained}.
Depending on the particular application, it can be possible to reformulate the optimization problem \eqref{eq:constrained} as a semidefinite program or to use methods such as the ellipsoid algorithm.  We illustrate these possibilities in the examples given throughout the introduction.

Our broader aim is to highlight deeper connections between topology and convexity. For example, our \cref{thm:connected_fibers} focuses on the case $k=1$, and yields the same conclusion as 
\Cref{thm:hidden_cvx} under a topological condition on $\cB$ that is simpler to check in some cases than the existence of a Lagrangian dual section. We also note that our results on convex reformulations for linear inverse spectral problems yield connections to Radon-Hurwitz numbers, which appear prominently in algebraic topology.

For the rest of the introduction, we will give examples of fundamental optimization problems in \Cref{sec:matrix_optimization,subsec:UPP}, and we will preview our results on conditions under which instances of these problems correspond to sectioned functions.
In \Cref{subsec:algorithms}, we will also describe algorithms which can be used to solve sectioned problems, both in their constrained and Lagrangian formulations.

\subsection{Linear Inverse Spectral Problems}
\label{sec:matrix_optimization}
We outline two common classes of optimization problems involving matrices: \emph{quadratically constrained quadratic programs} and \emph{linear Stiefel manifold optimization problems}. Both are known to be NP-hard in general \cite{song2024linear,ben2001lectures}. We view both of these problems as special cases of more general `linear inverse spectral problems', which involve optimization over matrices with fixed eigenvalues or singular values. Indeed, it is possible to consider these problems in an even broader context as optimization problems over the orbits of a Lie group action. For the sake of keeping our exposition elementary, we only outline the connections to Lie theory in \Cref{subsection:lie}.

We give a criterion for the tightness of natural convex relaxations of these problems using the notion of \emph{noncrossing} subspaces, which we define later in this section. 
We defer the proofs, which use the notion of Lagrangian dual sections, to  \Cref{sec:noncrossing}. 

\subsubsection{QCQPs and Linear Inverse Eigenvalue Problems}\label{subsec:QCQP}

Quadratically constrained quadratic programs (QCQPs) are found in a wide range of applications including engineering \cite{kocuk2016strong}, statistics \cite{candes2015phase}, and combinatorics \cite{gouveia2010theta}.
They are formulated as follows:
For $A_0, \dots, A_k \in \R^{n\times n}_{sym}$, consider
\begin{equation}
    \max \{x^{\intercal} A_0 x : x^{\intercal} A_1 x = c_1, \;\dots,\; x^{\intercal} A_k x = c_k, x \in S^{n-1}\}\label{eq:qcqp}.
    \tag{opt-QCQP}
\end{equation}
This is a special case of \eqref{eq:constrained} over the space $S^{n-1} = \{x \in \R^{n} : x^{\intercal}x = 1\}$, where the function $f$ is $f(x) = (x^{\intercal}A_0x,\dots, x^{\intercal}A_kx)$.


A QCQP can also be expressed in terms of rank one matrices: \eqref{eq:qcqp} is equivalent to the optimization problem
\begin{equation}
    \max \{\langle A_0, X\rangle : \langle A_1, X\rangle = c_1, \dots, \langle A_k, X\rangle = c_k, X \in \cM_{(1,0,\dots, 0)}\},
\end{equation}
where $\cM_{(1,0,\dots, 0)} = \{xx^{\intercal} : x \in S^{n-1}\} \subseteq \R^{n\times n}_{sym}$.
The notation $\cM_{(1,0,\dots, 0)}$ is meant to suggest an alternative characterization of this set as the set of matrices whose eigenvalues are $1$ with multiplicity~1 and 0 with multiplicity $n-1$. This motivates us to consider the more general domain $\cM_{\lambda} \subseteq \R^{n \times n}_{sym}$, which is the set of $n \times n$ symmetric matrices with prescribed eigenvalues $\lambda_1, \dots, \lambda_n$. 
A \demph{linear inverse eigenvalue problem} is a problem of the form 
\begin{equation}
    \max \{\langle A_0, X\rangle : \langle A_1, X\rangle = c_1, \dots, \langle A_k, X\rangle = c_k, X \in \cM_{\lambda}\},
    \label{eq:inverse_eigenvalue}\tag{opt-LIEP}
\end{equation}
where $\lambda$  is
fixed in advance. Such problems are studied in their own right \cite{candogan2021note, landau1994inverse}.

A standard tool for solving these problems is that of taking a convex relaxation, as these produce upper bounds on the optimal value of the problem.
The natural convex relaxation of \eqref{eq:inverse_eigenvalue} is given by
\begin{equation}
    \max \{\langle A_0, X\rangle : \langle A_1, X\rangle = c_1, \dots, \langle A_k, X\rangle = c_k, X \in \conv(\cM_{\lambda}\}),
    \label{eq:inverse_eigenvalue_conv}\tag{cr-LIEP}
\end{equation}
where $\conv(\cM_{\lambda})$ is known as the \demph{Schur-Horn orbitope}, which has an efficient spectrahedral representation \cite{sanyal2011orbitopes}. For example, \[\conv(\cM_{(1,0,\dots, 0)}) = \{X \in \R^{n \times n}_{sym} : \tr(X) = 1, X \succeq 0\}.\]
It is of considerable interest to understand when this convex relaxation is tight,
We will give a criterion for the convex relaxation to recover the true optimal value of the problem using the notion of a \emph{noncrossing subspace}, which was originally studied in \cite{crossing}.
To define a noncrossing subspace, first recall that an eigenvalue of a symmetric matrix is degenerate if its corresponding eigenspace has dimension greater than one, and otherwise we say that the eigenvalue is nondegenerate.

\begin{definition}\label{def:noncrossing-subspace}
A linear subspace $\subspace \subseteq \R_{sym}^{n\times n}$ is \demph{noncrossing} if every nonzero matrix in $\subspace$ has only nondegenerate eigenvalues.
More generally, we will say a linear subspace $\subspace \subseteq \R_{sym}^{n\times n}$ is \demph{$\ell$-weakly noncrossing} if every nonzero matrix in $\subspace$ has the property that its $\ell$ largest eigenvalues are all nondegenerate.
\end{definition}

Now we state our result for linear inverse eigenvalue problems.
\begin{theorem}
\label{thm:eigenvalues}
    Suppose that $A_0, \dots, A_k\in\R_{sym}^{n\times n}$ span a $\ell$-weakly noncrossing subspace, and $\lambda$ satisfies the condition that $\lambda_{i} = \lambda_{\ell+1}$ for all $i > \ell$.
    Then for any $c \in \R^{k}$, the values of \eqref{eq:inverse_eigenvalue_conv} and \eqref{eq:inverse_eigenvalue} agree, i.e. the convex relaxation of the linear inverse eigenvalue problem is tight.
\end{theorem}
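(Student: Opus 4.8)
The plan is to apply \Cref{cor:convex_reform}. Since $\cM_\lambda = \{U\Diag(\lambda)U^{\intercal} : U \in \O(n)\}$ is the continuous image of the compact group $\O(n)$, it is compact, so it suffices to exhibit a Lagrangian dual section for $f : \cM_\lambda \to \R^{k+1}$, $f(X) = (\langle A_0, X\rangle, \dots, \langle A_k, X\rangle)$. For $t \in \tDomain$ put $A(t) = \sum_{i=0}^{k} t_i A_i$, so that $\ell_t(X) = \langle A(t), X\rangle$. Writing the eigenvalues of $A(t)$ as $\mu_1(t) \ge \dots \ge \mu_n(t)$ and taking $\lambda_1 \ge \dots \ge \lambda_n$ (the multiset $\lambda$ is all that matters here), von Neumann's trace inequality gives $\max_{X \in \cM_\lambda} \langle A(t), X\rangle = \sum_{i=1}^{n}\mu_i(t)\lambda_i$.

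I would construct the section from spectral projections of $A(t)$. For $t \in \tDomain$ with $A(t) \ne 0$, the $\ell$-weak noncrossing hypothesis forces $\mu_1(t) > \dots > \mu_\ell(t) > \mu_{\ell+1}(t)$, so the rank-one spectral projections $\Pi_1(t), \dots, \Pi_\ell(t)$ onto the top $\ell$ eigenlines of $A(t)$ are well defined. Set
\[
  D(t) \;:=\; \sum_{i=1}^{\ell} \lambda_i\,\Pi_i(t) \;+\; \lambda_{\ell+1}\Bigl( I - \sum_{i=1}^{\ell}\Pi_i(t) \Bigr).
\]
Then $D(t) \in \cM_\lambda$: its eigenvalues are $\lambda_1, \dots, \lambda_\ell$ together with $\lambda_{\ell+1}$ repeated $n - \ell$ times, which is exactly the multiset $\lambda$ by the hypothesis $\lambda_i = \lambda_{\ell+1}$ for $i > \ell$. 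Moreover, using $\langle A(t), \Pi_i(t)\rangle = \mu_i(t)$ and $\langle A(t), I\rangle = \sum_j \mu_j(t)$, we get $\langle A(t), D(t)\rangle = \sum_{i=1}^{\ell}\lambda_i\mu_i(t) + \lambda_{\ell+1}\sum_{j>\ell}\mu_j(t) = \sum_{i=1}^{n}\lambda_i\mu_i(t)$, which is the maximum computed above; hence $D(t)$ maximizes $\ell_t$ and $(t, D(t)) \in \cB$. Continuity of $D$ on $\{t : A(t) \ne 0\}$ is standard perturbation theory: the eigenvalues of $A(t)$ vary continuously, the gaps $\mu_i(t) - \mu_{i+1}(t)$ for $i \le \ell$ are strictly positive and locally bounded away from $0$, so each $\mu_i(t)$ ($i \le \ell$) stays isolated and its Riesz projection $\Pi_i(t) = \tfrac{1}{2\pi \mathrm{i}}\oint (zI - A(t))^{-1}\,dz$ around a small loop depends continuously on $t$.

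The only point this construction leaves open is that $A(t)$ could vanish for some $t \in \tDomain$, which happens precisely when $A_0, \dots, A_k$ are linearly dependent. I would dispatch that case by a routine reduction: a nontrivial relation $\sum_i \gamma_i A_i = 0$ either writes the objective $f_0$ as a fixed linear combination of the constraints $f_1, \dots, f_k$, in which case \eqref{eq:inverse_eigenvalue} and \eqref{eq:inverse_eigenvalue_conv} both equal that same combination of $c_1, \dots, c_k$ over a common feasible set, or it lets one remove a redundant constraint and induct on $k$ -- the span of the remaining matrices being still $\ell$-weakly noncrossing, and $\lambda$ still satisfying the eigenvalue condition.

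I expect the main obstacle to be the two facts that make $D(t)$ work: that the maximum of a linear functional over the isospectral orbit $\cM_\lambda$ is attained at the matrix formed by aligning the eigenbasis of $A(t)$ with the sorted entries of $\lambda$ (which is where the spectrum condition $\lambda_i = \lambda_{\ell+1}$, $i > \ell$, enters, since it lets $D(t)$ ignore the possibly-degenerate bottom of the spectrum), and that the resulting section is genuinely continuous -- which is exactly the role of $\ell$-weak noncrossing, preventing the top $\ell$ eigenvalue gaps of $A(t)$ from closing. The linearly dependent case is shallow but accounts for the fiddliest bookkeeping.
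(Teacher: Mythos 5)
Your proof is correct and reaches the same conclusion, but you take a genuinely different route for the sectioning step. The paper first shows that $\ell_t$ has a \emph{unique} maximizer on $\cM_\lambda$ (via Schur--Horn: after diagonalizing $\sum t_i A_i$, the extreme points of $\{\diag(X): X\in\cM_\lambda\}$ are permutations of $\lambda$, and the $\ell$-weak noncrossing hypothesis pins down a unique sorted optimizer), and then invokes the abstract Lemma~\ref{lem:unique_max}, whose compactness argument manufactures continuity of the section automatically. You instead write down the section $D(t)$ explicitly as a linear combination of the top $\ell$ spectral projections of $A(t)$, verify its membership in $\cM_\lambda$ and its optimality via the trace inequality, and prove continuity directly through the Riesz projection integral. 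Both arguments rest on the same linear-algebraic fact (aligning the eigenbasis of $A(t)$ with the sorted $\lambda$, valid because the bottom of $\lambda$ is constant); the paper's route is shorter once Lemma~\ref{lem:unique_max} is available, while yours is self-contained and produces an explicit formula for $D(t)$ that could be algorithmically useful.

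Two small bookkeeping points. First, Corollary~\ref{cor:convex_reform} gives the reformulation over $\conv(f(\cM_\lambda))$, whereas the target statement is over $\conv(\cM_\lambda)$; you still need the one-line observation that since $f$ is linear, $f(\conv\cM_\lambda)=\conv f(\cM_\lambda)$ --- that is precisely the content of Proposition~\ref{prop:linear_proj}, which the paper invokes explicitly. Second, in the linearly dependent case with $\gamma_0\neq 0$, your claim that ``both equal that same combination of $c_1,\ldots,c_k$ over a common feasible set'' skips the possibility that the original feasible set over $\cM_\lambda$ is empty while the relaxed one over $\conv(\cM_\lambda)$ is not; ruling that out is exactly the content of the theorem for the remaining constraint matrices, so this is better handled by the induction-on-$k$ branch you also offer. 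The paper's treatment of this point (``by linear change of coordinates'') is equally terse, so you are not worse off, but the reduction does deserve the care you flag at the end.
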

We prove this in \Cref{subsec:liep}.
This result is interesting to a practitioner because it provides a new toolkit for showing that convex relaxations of structured optimization problems give a tight answer. 
It also recovers 
known results when $k=1$
\cite{au1984some}
 and $\lambda=(1,0,\dots,0)$ (\cite{gutkin2004convexity}).

\subsubsection{Linear Inverse Singular Value~Problems and Stiefel Manifold~Optimization}\label{subsubsec:lisv}
Analogously to the previous subsection on linear inverse eigenvalue problems, we will consider the inverse singular value problem. For $\sigma \in \R^{m}_{\ge 0}$ and $n \ge m$, we will let $\Msing$ be the space of $n\times m$ real matrices whose $m$ singular values are the entries of $\sigma$. If $X \in \R^{n\times m}$ is not full rank, then we consider 0 to be a singular value of $X$.

A linear inverse singular value problem given some 
$A_0, \dots, A_k \in \R^{n \times m}$ is defined as
\begin{equation}
    \max \{\langle A_0, X\rangle : \langle A_1, X\rangle = c_1, \dots, \langle A_k, X\rangle = c_k, X \in \Msing\}.
    \label{eq:inverse_singvalue}\tag{opt-LISV}
\end{equation}

Many important problems are special cases of this one.
For example, if $\sigma = (1,\dots, 1) \in \R^m$ is the all one's vector, then $\Msing$ is the Stiefel manifold of matrices $X \in \R^{n\times m}$ satisfying $X^{\intercal}X = I$. 
For this special case, we will use the notation
\[\St^{n,m} = \cM_{(1,\dots,1)}^{(n,m)} = \{X \in \R^{n\times m} : X^{\intercal}X = I\}.\]
Optimization over $\St^{n,m}$ is used extensively in robotics \cite{peretroukhin2020smooth,dellaert2020shonan, rosen2019se} and chemistry~\cite{bendory2020single}.

The linear inverse singular value problem also includes bilinear optimization, i.e. optimization problems of the form
\begin{equation}
    \max \{x^{\intercal} A_0 y : x^{\intercal} A_1 y = c_1, \dots, x^{\intercal} A_k y = c_k, x \in S^{n-1}, y \in S^{m-1}\},
\end{equation}
as the special case where $\sigma = (1,0,\dots,0)$.

Once again we can consider the convex relaxation,
\begin{equation}
    \max \{\langle A_0, X\rangle : \langle A_1, X\rangle = c_1, \dots, \langle A_k, X\rangle = c_k, X \in \conv(\Msing)\},
    \label{eq:inverse_singvalue_conv}\tag{cr-LISV}
\end{equation}
and ask when it is tight. The set $\conv(\Msing)$ is the \emph{Fan orbitope} in the language of \cite[Section 4.3]{sanyal2011orbitopes}, and has a spectrahedral representation.

In line with the previous section, we 
define some notions of singularly noncrossing subspaces.
For $A \in \R^{n \times m}$ with $m \le n$,
we say that a singular value $\sigma \ge 0$ of $A$ is degenerate if $\sigma^2$ is a degenerate eigenvalue of $A^{\intercal}A$.

\begin{definition}\label{def:singular-noncrossing-subspace}
We say a linear subspace $\subspace \subseteq \R^{n\times m}$ is \demph{$\ell$-weakly singularly noncrossing} if the $\ell$ largest singular values of every nonzero matrix in $\subspace$ are nondegenerate.
\end{definition}

\begin{theorem}
    \label{thm:singvalues}

    Let $m \le n$, and suppose that $A_0, \dots, A_k$ span a $\ell$-weakly singularly noncrossing subspace of $\R^{n \times m}$ for $\ell < m$.
    If either of the following two conditions hold, then \eqref{eq:inverse_singvalue} and \eqref{eq:inverse_singvalue_conv} agree, i.e. the convex relaxation for the inverse singular value problem is tight.

    \begin{itemize}
    \item If $\sigma \in \R_{\ge 0}^m$ is a vector with entries in descending order so that $\sigma_i = 0$ for $i > \ell$.

    \item If $\text{span} \{A_0, \dots, A_k\}$ in addition to being $\ell$-weakly singularly noncrossing contains no nonzero matrix of rank less than $m$, and $\sigma \in \R_{\ge 0}^m$ is a vector with entries in descending order so that $\sigma_i = \sigma_{\ell+1}$ for $i > \ell$.
    \end{itemize}
\end{theorem}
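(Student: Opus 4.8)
The plan is to establish that the linear map $f : \Msing \to \R^{k+1}$, $f(X) = (\langle A_0, X\rangle, \dots, \langle A_k, X\rangle)$, is Lagrangian dual sectioned, and then invoke \Cref{cor:convex_reform}. Here $\Msing$ is compact (it is closed and has constant Frobenius norm $\sqrt{\sum_i \sigma_i^2}$; equivalently it is the image of $\mathrm{O}(n)\times\mathrm{O}(m)$ acting on a fixed matrix with singular values $\sigma$), and $f$ is linear, so $\conv(f(\Msing))$ is the image of $\conv(\Msing)$ under $X\mapsto(\langle A_i,X\rangle)_i$; hence \Cref{cor:convex_reform} gives precisely that \eqref{eq:inverse_singvalue} and \eqref{eq:inverse_singvalue_conv} share the same optimal value. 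After deleting redundant linear constraints (which changes neither side) and disposing of the trivial case where $A_0 \in \Span\{A_1,\dots,A_k\}$ (there the objective is constant on every feasible fiber), I may assume $A_0,\dots,A_k$ are linearly independent, so that $A_t := \sum_{i=0}^k t_i A_i \ne 0$ for every $t \in \tDomain$.

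For fixed $t$, maximizing $\ell_t(X) = \langle A_t, X\rangle$ over $\Msing$ is governed by von Neumann's trace inequality: writing $s_1(A_t) \ge \dots \ge s_m(A_t) \ge 0$ for the singular values of $A_t$, one has $\langle A_t, X\rangle \le \sum_{i=1}^m \sigma_i\, s_i(A_t)$ for all $X \in \Msing$, with equality iff $X$ admits a singular value decomposition sharing ordered singular vectors with $A_t$. So producing a section means choosing such an aligned matrix continuously and consistently in $t$. This is where the noncrossing hypothesis enters: since $\Span\{A_0,\dots,A_k\}$ is $\ell$-weakly singularly noncrossing, each of $s_1(A_t)^2,\dots,s_\ell(A_t)^2$ is a simple eigenvalue of $A_t^\intercal A_t$, which forces the strict chain $s_1(A_t) > \dots > s_\ell(A_t) > s_{\ell+1}(A_t)$ and, because $\ell < m$, also $s_\ell(A_t) > 0$. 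Hence the top $\ell$ left and right singular vectors $u_i(A_t), v_i(A_t)$ are unique up to a common sign, so each $u_i(A_t) v_i(A_t)^\intercal$ — and therefore $\sum_{i=1}^\ell \sigma_i\, u_i(A_t) v_i(A_t)^\intercal$ and the projection $P_t := I_m - \sum_{i=1}^\ell v_i(A_t) v_i(A_t)^\intercal$ — is well defined and depends continuously on $t$, by continuity of isolated spectral projections.

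In the first case, $\sigma_i = 0$ for $i > \ell$, so I take $D(t) := \sum_{i=1}^\ell \sigma_i\, u_i(A_t) v_i(A_t)^\intercal$; this matrix has singular values $\sigma_1,\dots,\sigma_\ell,0,\dots,0$ (so $D(t)\in\Msing$), attains the bound $\sum_{i=1}^m \sigma_i s_i(A_t)$, and is continuous, hence is a Lagrangian dual section. In the second case, the hypothesis that $\Span\{A_i\}$ contains no nonzero matrix of rank less than $m$ forces $\rank A_t = m$, so $A_t^\intercal A_t \succ 0$, and I take
\[
D(t) := \sum_{i=1}^\ell \sigma_i\, u_i(A_t) v_i(A_t)^\intercal \;+\; \sigma_{\ell+1}\, A_t P_t \bigl( (P_t A_t^\intercal A_t P_t)^{+} \bigr)^{1/2},
\]
where $(\cdot)^+$ is the Moore--Penrose pseudoinverse. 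The second summand equals $\sigma_{\ell+1}\sum_{i=\ell+1}^m u_i(A_t) v_i(A_t)^\intercal$, which is insensitive to the nonunique choice of singular vectors within a degenerate block among $s_{\ell+1}(A_t),\dots,s_m(A_t)$ (there the left and right singular vectors rotate by the same orthogonal matrix, which cancels in $\sum u_i v_i^\intercal$), and which is continuous because $\rank A_t = m$ makes $P_t A_t^\intercal A_t P_t$ a matrix of locally constant rank $m-\ell$ with smallest nonzero eigenvalue $s_m(A_t)^2$ bounded away from $0$ (by compactness of the unit sphere in $t$-space and homogeneity). Since $\sigma_{\ell+1} = \dots = \sigma_m$, the matrix $D(t)$ has singular values exactly $\sigma$, attains the trace bound, and is continuous, so again $f$ is sectioned.

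I expect the main obstacle to be the second case: verifying simultaneously that the candidate section is well defined (independent of the arbitrary orthonormal bases forced by degenerate trailing singular values), is genuinely continuous across the locus where those singular values collide, and keeps $D(t)$ inside $\Msing$. The first case, the reduction to linearly independent $A_i$, and the final appeal to \Cref{cor:convex_reform} are routine by comparison.
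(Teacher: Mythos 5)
The proposal is correct but takes a genuinely different route from the paper's proof. The paper proves this theorem via Lemma~\ref{lem:sing} and Proposition~\ref{prop:linear_proj}; the proof of Lemma~\ref{lem:sing} (a) uses bi-orthogonal invariance of $\cM_\sigma^{(n,m)}$ to reduce to the case $\sum_i t_i A_i = \Diag(\mu)$ with $\mu_1 > \dots > \mu_\ell > \mu_{\ell+1} \ge \dots \ge 0$, (b) applies Fan's theorem to characterize the extreme points of $\{\diag(X) : X \in \cM_\sigma^{(n,m)}\}$ as signed permutations of $\sigma$, (c) deduces that $\Diag(\sigma)$ is the \emph{unique} maximizer of $\ell_t$, and (d) invokes Lemma~\ref{lem:unique_max}, the general fact that unique maximization over a compact metric space yields a continuous section automatically. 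You instead bypass the unique-maximization lemma entirely and \emph{construct} $D(t)$ explicitly from the singular vectors of $A_t$, using von Neumann's trace inequality (with its equality characterization) in place of Fan's theorem, and verifying continuity directly from the formula — including a pseudoinverse-based expression that remains continuous across collisions of the trailing singular values in the second case. Both arguments are sound. The paper's route is more economical: once uniqueness is established, continuity is free, so no explicit formula is needed and no case analysis of spectral collisions arises. Your route is more constructive (the formula for $D(t)$ could be algorithmically useful), but carries more bookkeeping: the second case requires the rank-$m$ hypothesis to keep $(P_t A_t^\intercal A_t P_t)^+$ of locally constant rank, and the invariance of the tail sum $\sum_{i>\ell} u_i v_i^\intercal$ under block rotations has to be verified by hand. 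One small clarification worth adding: ``continuity of isolated spectral projections'' applies directly to $v_i(A_t) v_i(A_t)^\intercal$ (a spectral projector of $A_t^\intercal A_t$); to get continuity of the rank-one factor $u_i(A_t) v_i(A_t)^\intercal$ itself one should note the identity $u_i v_i^\intercal = A_t\, v_i v_i^\intercal / s_i(A_t)$, valid since $s_i(A_t) > 0$ for $i\le\ell$.
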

We prove this in \Cref{subsec:lisv}.
In \cite{song2024linear}, it was shown that if $k \le n - m$, and if $\sigma = (1,\dots,1)$, then for any $A_0,\ldots, A_{k} \in \R^{n\times m}$ and for $f = (\langle A_0, X\rangle, \dots, \langle A_k, X\rangle)$, the convex relaxation is tight. This result can be seen to follow from \cref{thm:singvalues} in the case in which $\sigma = (1,\dots, 1)$, as we note in \cref{rmk:steifel}. In \cite[Theorem 11.5]{li2000numerical}, the result when $k=1$ was also shown.

The applicability of our results depends on how commonplace (singularly) noncrossing subspaces are. There are many interesting examples, as we will discuss in the following remark.
\begin{remark}[Comments on the generality of our results]
Whether or not there exists a (singularly) noncrossing subspace of dimension $k+1$ in $\R^{n\times n}_{sym}$ is an interesting question characterized in \cite{crossing}. If $k = 1$, then the Von Neumann-Wigner crossing rule \cite{von1993verhalten} states that almost all (i.e. all but a set of measure 0) pairs $A_0, A_1 \in \R^{n \times n}_{sym}$ span a noncrossing subspace. If $n$ is a power of $2$, then noncrossing subspaces of dimension $k$ only exist when $k \le \log_2(n)$.  For other values of $n$, the largest dimension of a noncrossing subspace of $\R^{n\times n}_{sym}$ dimension is not monotonic in $n$, and is given by the so-called Radon-Hurwitz number.
On the other hand, there exists a $k$-dimensional 1-weakly noncrossing subspace in $\R^{n \times n}_{sym}$ if and only if $k < n$ \cite{friedland1976subspaces}, and this is the case which is relevant for QCQPs.
Moreover, the set of $d$-dimensional $k$-weakly noncrossing subspaces of $\R^{n\times n}_{sym}$ can be thought of as an open subset of an appropriate Grassmannian, and so this phenomenon is not restricted to a set of measure zero.
\end{remark}
\begin{remark}[Connection to Kostant convexity theorem]
Using Kostant's convexity theorem \cite{kostant1973convexity} we can generalize our results to domains $\cM$ which arise as the orbit of a single point under the action of a polar Lie group orbit; see \Cref{subsection:lie}.
This has applications to matrix optimization problems over domains such as the special orthogonal group or settings involving complex numbers, and it is possible to generalize our results using appropriately generalized notions of noncrossing subspaces. We will not write out the analysis in these settings here for the sake of brevity, though convex reparametrizations in the context of the special orthogonal group was analyzed in \cite{ramachandran2024hidden}.
\end{remark}

\subsection{Unbalanced Orthogonal Procrustes Problems}
\label{subsec:UPP}

The Unbalanced Procrustes Problem (UPP) is well-studied~\cite{zhang2020eigenvalue,UPP-relaxation},
and has resisted attempts to understand its computational complexity. In particular, it not known whether this problem is NP-hard or if it admits a polynomial time algorithm.
For $n > m$ and given a pair of matrices, $U \in \R^{n \times d}$ and $W \in \R^{m \times d}$, we have the following instance of the UPP:
\begin{equation}
\label{eq:UPP}\tag{opt-UPP-orig}
    \argmin \left\{
    \|U^{\intercal} X - W^{\intercal}\|^2
    : X \in \St^{n,m} \right\}.
\end{equation}
Here $\|\cdot\|$ denotes the Frobenius norm on matrices. 

This problem has a  geometric description:
if we interpret the columns of $U$ as being points in $\R^n$, then \eqref{eq:UPP} asks us to find a rotation and projection of these points so that their images are as close as possible to the points corresponding to the columns of $W$. 

\begin{figure}
    \centering
    \includegraphics[width=\linewidth]{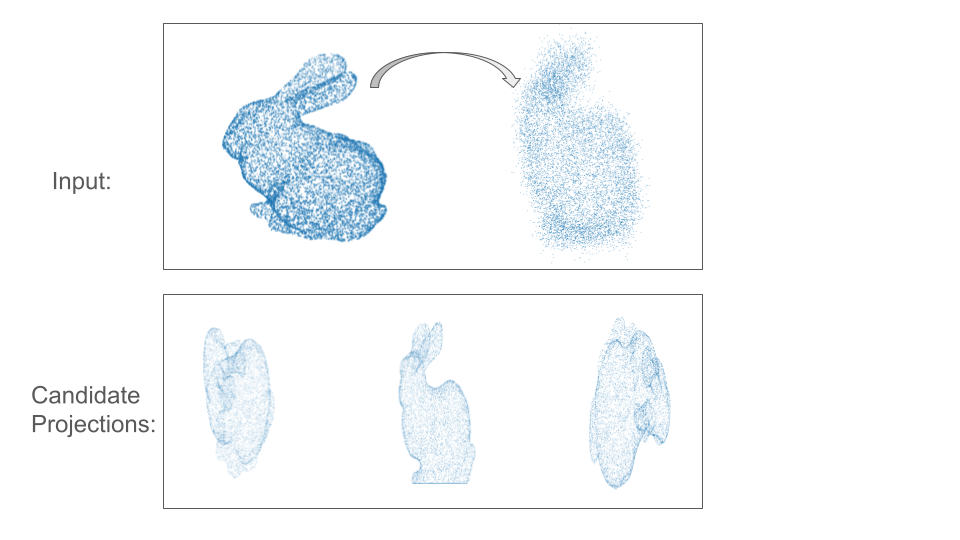}
    \caption{An example of the UPP with $(n,m) = (3,2)$: the columns of $U$ can be thought of as elements of a point cloud in 3D, and the columns of $V$ can be thought of as elements of a 2D point cloud. The goal of the UPP is to find an orthogonal projection of the 3D point cloud that best matches the 2D point cloud. We discuss this example further in \cref{fig:CHORDExample}.
    }
    \label{fig:placeholder}
\end{figure}
We reparametrize this problem in terms of two matrices $A$ and $B$, where $A = -UU^{\intercal}$ and $B = 2UW^{\intercal}$. \Cref{lem:UPP_equivalent} shows that the UPP is equivalent to
\begin{equation}\tag{opt-UPP}
    \argmax_{X\in \R^{n\times m}} \langle B, X\rangle + \langle A, XX^{\intercal}\rangle.
\end{equation}

This reformulation highlights the fact that we can view the objective as being an instantiation of the Lagrangian associated to the function
\[
f : \St^{n,m} \rightarrow \R^2,\; f(X) = (\langle B, X\rangle,\langle A, XX^{\intercal}\rangle).
\]
In particular, the optimal value of the UPP problem is $\displaystyle\max_{X \in \St^{n,m}} f_0(X) + f_1(X)$.

When $m=1$ this problem is equivalent to the \emph{generalized trust region subproblem} of minimizing an inhomogeneous quadratic polynomial on the sphere, which has a tight convex reformulation. We will discuss how we can view this convex reformulation as a consequence of our results in \Cref{ex:UPP1}, in particular showing that $f$ is always sectioned when $m=1.$

It is not the case that all instances of the UPP are sectioned, but we will give a criterion in \Cref{thm:UPP32} for which the UPP with $(n,m) = (3,2)$ is sectioned. Numerical experiments indicate that if $A \in \R^{3\times 3}_{sym}$ and $B \in \R^{3\times 2}$ are sampled uniformly at random from the set of matrices with unit Frobenius norms, then $95\%$ of these instances correspond to cases in which $f$ is sectioned.

\subsection{Algorithms}
\label{subsec:algorithms}
Our results illustrate that the existence of Lagrangian dual sections has strong implications for the geometry of optimization problems such as \eqref{eq:constrained}. Assuming that there exists a Lagrangian dual section, it is natural to ask two algorithmic questions: (1) how can we solve problems of the form $\max_{x \in \cM} \langle t, f(x)\rangle$ for $t \in \tDomain$, and subsequently (2) how can we solve the constrained optimization problems \eqref{eq:constrained}, assuming that we can solve (1) efficiently?

When a problem admits a Lagrangian dual section $D,$ question (1) above asks for a method to compute $D(t)$ for any given value of $t$. One natural approach leverages the continuity of $D$: start with the value of $D(t^{(0)})$ for some $t^{(0)} \in \tDomain$, then track the value of $D(t)$ as $t$ varies from $t^{(0)}$ to its target value. Similarly, there is a natural approach to question (2) about \eqref{eq:constrained}: leverage the convexity of $V_f$ to apply the ellipsoid algorithm from convex optimization.
While these are natural approaches to problems (1) and (2) enumerated above, we will need additional structure to rigorously analyze concrete instantiations of these approaches. This is because there is no way to quantify distances between points in a general topological space $\cM$, and we will need quantitative bounds on distances to control the error incurred by our algorithms. To this end, for our algorithmic results, we will assume that $\cM$ is in fact a Riemannian manifold, and that we can bound the first and second derivatives Lagrangian $\cL(t,x)$. It turns out that this is enough structure for us to give algorithms to solve (1) and (2) with rigorous mathematical guarantees.

In \Cref{sec:alg-in-Riemannian-case}, we leverage the theory of Riemannian manifolds and manifold optimization, and combine this with the notion of Lagrangian dual sections to address problems (1) and (2) above. Our main contribution is a `path-tracking' algorithm for solving problems of the form $\max_{x \in \cM} \langle t, f(x)\rangle$, which we call CHORD (Continuous Homotopy Optimization with Riemannian Descent).
This algorithm combines elements of existing path-tracking / homotopy continuation methods with Riemannian descent techniques, but differs from them in the following important respect---in stark contrast to current methods which either lack convergence guarantees or track paths for all first-order critical points, we can recover the global optimum of a sectioned problem by tracking a single path.  In particular, while our methods ensure a global optimum for a sectioned problem, Riemannian gradient descent (RGD) on its own does not come with such guarantees and it may become trapped in local optima.  We give an example for the UPP described above in which CHORD returns a near-global optimizer, while RGD does not in \Cref{ex:upp-bunny}.  Our analysis combines the continuity properties of sectioned problems and convergence theory for RGD.  

\subsection{Related Work}
There has been an extensive body of work concerning hidden convexity in the sense of finding convex images of sets under projection. Perhaps the most classical example is the Schur-Horn theorem \cite{schur1914zwei, horn1954doubly}, and its generalization due to Kostant \cite{kostant1973convexity}. There is also the classical Toeplitz-Hausdorff theorem concerning different `generalized numerical ranges' \cite{toeplitz1918algebraische, hausdorff1919wertvorrat, davis1971toeplitz}. More recent works along these lines on projections of sets under linear maps include \cite{brickman1961field,binding1985hermitian,gutkin2004convexity, au1983extension, au1984some, li2000numerical, song2024linear}. Of these, the most similar in spirit to our work is \cite{gutkin2004convexity}, which applied differential geometry to consider the convexity of the image of a collection of quadratic forms on a sphere, and similarly analyzed the Lagrangian. Our work unifies the proofs of many of the results in these earlier papers, in addition to having more implications beyond those covered in the earlier literature.

There is also a large literature on the exactness of semidefinite programming relaxations. In particular, the S-lemma is a standard result in optimization, which is equivalent to Brickman's theorem that the image of a sphere under two quadratic maps is convex. Much of this work considers different constraint qualification conditions, such as those of `linear independence type'. For example, \cite{cifuentes2022local} discusses the stability of the exactness of semidefinite programming relaxations under perturbations. Our work does not directly consider such constraint qualification conditions, though it may be interesting to consider whether it is possible to unify these points of view on semidefinite programming tightness.

There is also an extensive literature on manifold optimization, which directly minimizes a function on a manifold without passing to a convex relaxation. To highlight a few examples in the context of Grassmannian optimization, we refer the reader to~\cite{EGNS2021-gro,LLY2025-gro,YWL2022-gro,LWY2021-gro}. For a general perspective on such problems, we  recommend the textbook \cite{boumal}.  Our theory is distinct from this prior manifold optimization literature in that we allow for additional nonlinear constraints to manifold optimization problems (these additional constraints cannot be decoupled from the manifold constraint).  In contrast, the earlier theory typically only considers function minimization on the manifold.

The methods of~\Cref{sec:alg-in-Riemannian-case} fuse Riemannian optimization with homotopy continuation methods. The latter are well-known in optimization (see e.g.~\cite[\S 11.3]{nocedal2006numerical}.) 
They are also used prominently in algebraic rootfinding to track multiple solution paths over the complex numbers; this ensures all roots are found `with probability one' (see e.g.~\cite[Ch.~4 \& 7]{sommese-wampler} for precise statements.)
When solving first-order optimality equations, this approach may not scale due to large numbers of complex roots (see e.g.~\cite{dhost,nie-ranestad}).
In stark contrast, we track just one path over the reals to find the global optimum of any sectioned problem.

Some of our results concern subspaces of matrices with various spectral properties. These have been studied extensively in algebraic topology.  For example \cite{adams1965matrices} relates the question of the dimensions of subspaces of $\R^{n\times m}$ which include no singular matrices to Adams' theory of linearly independent vector bundles on the sphere. These results were extended to `noncrossing subspaces' in \cite{crossing}.

\subsection{Notation} 

We let $\R_{sym}^{n\times n}$ denote the vector space of $n\times n$ symmetric matrices. If $A, B \in \R^{n\times m}$, we recall that the trace inner product is defined by $\langle A, B\rangle = \tr(A^{\intercal}B)$. For a set $S \subseteq \R^k$, we let $\conv(S)$ denote the convex hull of $S$, defined as the intersection of all convex sets containing $S$.

\section{Convexity and Topology}\label{sec:convexity-topology}

The main purpose of this section is to lay out some fundamental connections between elementary notions from topology and convexity.
In particular, we will emphasize the topological properties of the Lagrangian dual bundle and how they imply the convexity of the optimal value function $V_f(c)$ defined by \eqref{eq:constrained}.  The section is organized as follows.  In \Cref{subsec:main_proof} we prove our main result \Cref{thm:hidden_cvx}, which shows that the existence of a Lagrangian dual section implies the existence of a natural convex reformulation for the problem \eqref{eq:constrained}.  In \Cref{sec:unique_maximization}, we show that if the maximizer of the Lagrangian $\mathcal{L}(t,x)$ is unique for each $t \in \tDomain$, then $f$ is sectioned.  We also show that this property of having unique maximizers is preserved under a natural operation in \Cref{lem:concave_composition}.
Finally, in \Cref{subsec:connected_fibers} the connectedness of the fibers of the Lagrangian bundle are also shown to imply the existence of a natural convex reformulation. Here the fibers of the Lagrangian bundle are the sets of the form $\cM_t = \{(t,x) : x \in \argmax \langle t, f(x)\rangle\}$.

\subsection{Proof of \Cref{thm:hidden_cvx}}
\label{subsec:main_proof}
In finite dimensions, the convex hull of a compact set is compact \cite[Corollary 2.4]{barvinok}, and in particular, this implies that if $\cM$ is compact and $f : \cM\rightarrow \R^{k+1}$ is continuous, then $\conv f(\cM)$ is compact. We may also assume that $\conv f(\cM)$ is full-dimensional, i.e., it contains an interior point, for otherwise we could just restrict $f$ to be a map from $\cM$ to an affine subspace of $\R^{k+1}$.
\newcommand{\fminuszero}{f_{\operatorname{CM}}}
In this section, we use the notation $\fminuszero(x)$ for  $(f_1(x),\dots,f_k(x))$, where $\operatorname{CM}$ is meant to abbreviate `constraint map'.

We aim to prove that the Lagrangian dual of \eqref{eq:constrained}
satisfies a version of \emph{strong duality}. To be precise, we define the function
\[
    \widehat{V}_f(c) = \inf_{\lambda \in \R^k} \sup_{x \in \cM} \mathcal{L}\left(\begin{pmatrix} 1\\ \lambda\end{pmatrix}, x\right) - \langle c, \lambda\rangle,
\]
and show that for all $c \in \R^{k}$,
\begin{equation}\label{eq:dual-equals-primal}
    \widehat{V}_f(c) = V_f(c).
\end{equation}
This function $\widehat{V}_f(c)$ is
analogous to the double Fenchel conjugate of $V_f$, and 
since it is the pointwise minimum of linear functions in $c$, it is concave.
Thus, 
proving
the equality 
\eqref{eq:dual-equals-primal} implies that $V_f(c)$ is concave.

At a high level, we will prove $\widehat{V}_f(c) = V_f(c)$ by showing that 
$\Voptf(c)$ is bounded above and below by $\widehat{V}_f(c)$.
The bound $\widehat{V}_f(c) \ge \Voptf(c)$ is a standard consequence of weak duality.
Proving the second bound, $\Voptf(c)\ge \widehat{V}_f(c)$, involves using the Lagrangian dual section to show the existence of some $\lambda \in \R^k$ for which the inner maximization gives a value of at most $\widehat{V}_f(c)$. In order to show the existence of such a $\lambda$, we will employ some ideas from algebraic topology.

 \subsubsection*{A lower bound on $\widehat{V}_f(c)$.}

 We may assume that $\Voptf(c) \neq -\infty$. 
 By compactness of $\cM$ there exists a point $x_c\in \cM$ such that
 \[
 f(x_c) = \begin{pmatrix} V_f(c) \\ c \end{pmatrix}.
 \]

For any $\lambda \in \R^k$, we have
\begin{align*}
\mathcal{L}\left(
    \begin{pmatrix} 1\\ \lambda\end{pmatrix}, x_c
    \right) 
    - \langle c, \lambda\rangle 
    \;=\;& f_0(x_c) + 
    \langle \fminuszero(x_c), \lambda\rangle 
    - 
    \langle c, \lambda\rangle \\
    \;=\;& V_f(c).
\end{align*}
Since this holds for each $\lambda$, the infimum over $\lambda\in\mathbb{R}^k$ satisfies the upper bound:
\[\widehat{V}_f(c)=
\inf_{\lambda \in \R^k} \sup_{x \in \cM} \mathcal{L}\left(\begin{pmatrix} 1\\ \lambda\end{pmatrix}, x\right) - \langle c, \lambda\rangle
\ge V_f(c).
\]

\subsubsection*{Towards an upper bound on $\widehat{V}_f(c)$.}

Let $D(t)$ be the Lagrangian dual section for $f$ that is guaranteed by the hypotheses of \Cref{thm:hidden_cvx}.
We want to show that for any $c\in\mathbb{R}^k$ for which $\widehat{V}_f(c) \neq -\infty$ we have $V_f(c) \ge \widehat{V}_f(c)$. The following lemma reduces this problem to that of finding some $t \in \tDomain$ so that $\fminuszero(D(t)) = c$.

\begin{lemma}
    
    If there exists $t\in\tDomain$ such that $\fminuszero(D(t))=c$,
    then $V_f(c) \ge \widehat{V}_f(c)$.
\end{lemma}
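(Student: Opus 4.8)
The plan is to show that if $t \in \tDomain$ satisfies $\fminuszero(D(t)) = c$, then $D(t)$ is simultaneously a feasible point for \eqref{eq:constrained} (hence witnesses $V_f(c) \geq f_0(D(t))$) and, via the defining property of the Lagrangian dual section, certifies an upper bound on the dual $\widehat{V}_f(c)$ that matches $f_0(D(t))$. Chaining these gives $V_f(c) \geq f_0(D(t)) \geq \widehat{V}_f(c)$.

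In more detail: First I would observe that since $(t, D(t)) \in \cB$, the point $D(t)$ maximizes $\ell_t(x) = \langle t, f(x)\rangle$ over $\cM$. Write $t = (t_0, \lambda')$ with $t_0 \geq 0$; the mild issue that $t_0$ might be zero needs a word, but I would argue it can be handled (either by noting that the section can be taken with $t_0 > 0$ on a dense set and using continuity of $\widehat{V}_f$, or by a direct scaling argument — actually the cleanest route is: if $t_0 > 0$ we may rescale $t$ so that $t_0 = 1$ since rescaling by a positive constant does not change the argmax, so set $\lambda = \lambda'/t_0$). Then for the rescaled multiplier $\lambda$, for every $x \in \cM$ we have $\mathcal{L}\!\left(\begin{psmallmatrix} 1\\ \lambda\end{psmallmatrix}, x\right) = f_0(x) + \langle \fminuszero(x), \lambda\rangle \leq f_0(D(t)) + \langle \fminuszero(D(t)), \lambda\rangle$ because $D(t)$ is the maximizer. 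Subtracting $\langle c, \lambda\rangle$ from both sides and using the hypothesis $\fminuszero(D(t)) = c$, the right-hand side collapses to $f_0(D(t))$. Taking the supremum over $x$ and then the infimum over $\lambda \in \R^k$ yields $\widehat{V}_f(c) \leq f_0(D(t))$.

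Next I would note that the hypothesis $\fminuszero(D(t)) = c$ says exactly that $D(t)$ is feasible for \eqref{eq:constrained}, so by definition $V_f(c) \geq f_0(D(t))$. Combining, $V_f(c) \geq f_0(D(t)) \geq \widehat{V}_f(c)$, which is the desired inequality.

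The only real obstacle is the degenerate case $t_0 = 0$, where one cannot normalize to $t_0 = 1$ and the argument above does not directly produce a finite $\lambda$. I expect this to be a minor point rather than a genuine difficulty: one can perturb $t$ slightly to $t^{(\eps)}$ with positive zeroth coordinate, apply continuity of $D$ and of $\fminuszero$ to see $\fminuszero(D(t^{(\eps)}))$ approaches $c$, and pass to the limit using the concavity (hence continuity on the interior of its domain) of $\widehat{V}_f$; alternatively, the statement may simply be invoked in contexts where the relevant $t$ has $t_0 > 0$, so I would flag the normalization step and proceed under $t_0 > 0$, remarking that this is the case that arises. Everything else is a routine weak-duality-style manipulation built entirely on the maximizing property of $D(t)$.
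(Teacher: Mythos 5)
Your argument for the $t_0 > 0$ case is correct and is essentially the same as the paper's: rescale to $\lambda = (t_1,\dots,t_k)/t_0$, use that $D(t)$ maximizes $\cL\!\left(\begin{pmatrix}1\\\lambda\end{pmatrix},\,\cdot\,\right)$ (since this equals $\cL(t,\cdot)/t_0$), and chain the weak-duality inequalities to get $\widehat{V}_f(c) \le f_0(D(t)) \le V_f(c)$, the last step being feasibility.

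The gap is the $t_0 = 0$ case, which you correctly flag as an obstacle but do not resolve. Your first suggestion --- perturb $t$ to some $t^{(\eps)}$ with positive zeroth coordinate, use continuity of $D$, and pass to a limit --- does not go through: the lemma's hypothesis requires $\fminuszero(D(t)) = c$ \emph{exactly}, and for the perturbed $t^{(\eps)}$ you only have $\fminuszero(D(t^{(\eps)})) \approx c$, so you cannot invoke the lemma (or the weak-duality chain) at $t^{(\eps)}$; the constant $c$ has also drifted, and it is not clear how to recover the inequality at $c$ itself in the limit. Your second suggestion --- that the lemma is only invoked with $t_0 > 0$ --- is also not safe: the surjectivity argument in \Cref{prop:surjective} produces some $t \in H_k$ with $\fminuszero(D(t)) = c$, and for $c$ on the boundary of $\conv \fminuszero(\cM)$ this $t$ may well lie on $\partial H_k$, i.e.\ have $t_0 = 0$. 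The paper handles $t_0 = 0$ with a different and cleaner device that you should note: rather than perturbing $t$ (and hence $D(t)$), it \emph{fixes} $D(t)$ and instead considers the one-parameter family of multipliers $\lambda^{(j)} = j \cdot (t_1,\dots,t_k)$, applies the same chain of inequalities at each $j$, and lets $j \to \infty$. Because $D(t)$ and the hypothesis $\fminuszero(D(t)) = c$ are held fixed, no drift occurs and the limiting argument stays inside the framework of the lemma.
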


\begin{proof}
If such a $t$ exists, then we are in one of two cases.
In the first case, $t_0 > 0$. In this situation, we let 
$\lambda = \frac{1}{t_0}{(t_1,\dots,t_k)}$ 
and get the desired inequality:
\begin{align*}
    \widehat{V}_f(c) &\le \sup_{x \in \cM} \mathcal{L}\left(\begin{pmatrix} 1\\ \lambda\end{pmatrix}, x\right) - \langle c, \lambda\rangle\\
    &= \mathcal{L}\left(\begin{pmatrix} 1\\ \lambda\end{pmatrix}, D(t)\right) - \langle c, \lambda\rangle\\
    &= f_0(D(t)) + \langle \fminuszero(D(t)), \lambda\rangle - \langle c, \lambda\rangle\\
    &\le V_f(c).
\end{align*}
For the case $t_0 = 0$, we can apply a sequence of the same inequalities with
$\lambda^{(j)} = j\cdot (t_1,\dots,t_k)$ for each $j \in \N$.
Letting $j \to \infty$ gives the desired lower bound on $\Voptf(c)$.
\end{proof}

\subsubsection*{Proving surjectivity.}
Completing the proof
reduces to showing that for any $c$ such that $\widehat{V}_f(c) \neq -\infty$, there exists some $t \in \tDomain$ 
with
$\fminuszero(D(t)) = c$.
The separating hyperplane theorem implies that
\[\{c \in \R^k : \widehat{V}_f(c) \neq - \infty\} = \conv \fminuszero(\cM).\]
Letting $\cC = \conv \fminuszero(\cM)$, the main theorem follows from the next result: 

\begin{proposition}
\label{prop:surjective}
    Let $H_k = \{t \in \R^{k+1}_{0+} : \|t\|=1\}$. The map $\phi : H_k \rightarrow \cC$ defined by $\phi(t) = \fminuszero(D(t))$ is surjective.
\end{proposition}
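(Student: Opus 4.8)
The plan is to prove surjectivity of $\phi$ via a degree-theoretic / topological argument, exploiting the fact that $H_k$ is (homeomorphic to) a closed ball and $\mathcal{C} = \conv \fminuszero(\cM)$ is a full-dimensional compact convex body. First I would dispose of the boundary of the Lagrangian domain: points $t \in H_k$ with $t_0 = 0$ should map, under $\phi$, into the boundary $\partial \mathcal{C}$. Indeed, if $t_0 = 0$ then $D(t)$ maximizes $x \mapsto \langle (0, t_1, \dots, t_k), f(x)\rangle = \langle (t_1,\dots,t_k), \fminuszero(x)\rangle$ over $\cM$, so $\fminuszero(D(t))$ is a point of $\fminuszero(\cM) \subseteq \mathcal{C}$ at which the nonzero linear functional $(t_1,\dots,t_k)$ is maximized over $\mathcal{C}$; hence it lies on $\partial \mathcal{C}$, on the supporting hyperplane with outer normal $(t_1,\dots,t_k)$. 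The set $\{t \in H_k : t_0 = 0\}$ is exactly a copy of the sphere $S^{k-1}$, which is the boundary of the ball $H_k$; so $\phi$ maps $\partial H_k$ into $\partial \mathcal{C}$, and moreover it does so in a way that "wraps around" — each outer normal direction is hit. This is the structural observation that makes a degree argument work.

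Next I would set up the degree argument proper. Fix a point $c_0$ in the interior of $\mathcal{C}$ (which exists by full-dimensionality). Using the supporting-hyperplane description above together with the homeomorphism $H_k \cong B^k$ sending $\partial H_k = \{t_0 = 0\} \cong S^{k-1}$ to $\partial B^k$, I would argue that the restriction $\phi|_{\partial H_k} : S^{k-1} \to \partial \mathcal{C} \setminus \{c_0\} \subseteq \R^k \setminus \{c_0\}$ is homotopic (within $\R^k \setminus \{c_0\}$) to a degree-one map onto $\partial \mathcal{C}$ — intuitively, composing with the retraction of $\partial \mathcal{C}$ onto a sphere centered at $c_0$, one checks the Gauss-map–like assignment $t \mapsto$ (the exposed point with normal $(t_1,\dots,t_k)$) is a degree-one map $S^{k-1} \to S^{k-1}$, at least after a limiting/approximation argument to handle faces of positive dimension. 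Once $\phi|_{\partial H_k}$ has nonzero winding number around $c_0$, the standard consequence (if $\phi$ did not hit $c_0$ anywhere on $H_k$, then $\phi|_{\partial H_k}$ would be null-homotopic in $\R^k \setminus \{c_0\}$, contradiction) forces $c_0 \in \phi(H_k)$. To reach arbitrary $c \in \mathcal{C}$, not just interior points: interior points are covered by the above, and then $\phi(H_k)$ is compact (continuous image of compact) hence closed, and contains the dense subset $\operatorname{int}(\mathcal{C})$, so it contains all of $\mathcal{C}$.

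I expect the main obstacle to be making the boundary analysis rigorous when $\mathcal{C}$ has faces of dimension $\geq 1$: then the "exposed point with outer normal $(t_1,\dots,t_k)$" is not uniquely determined, the Lagrangian maximizer $D(t)$ may sit anywhere in that face, and $\phi$ need not be continuous-looking as a Gauss map. The clean way around this, which I would pursue, is \emph{not} to compute the degree on $\partial H_k$ directly but to use a homotopy-invariance argument on all of $H_k$: consider the straight-line homotopy $h_s(t) = (1-s)\phi(t) + s\,\psi(t)$ where $\psi(t)$ is a genuinely continuous "model" map (e.g. built from the metric projection onto $\mathcal{C}$ of the ray through $c_0$ in direction determined by $t$, or simply a radial parametrization of $\mathcal{C}$ by $H_k$), show $h_s(t) \ne c_0$ for all $t \in \partial H_k$ and all $s$ — this needs the supporting-hyperplane fact to guarantee $\phi(t)$ and $\psi(t)$ lie in the same closed half-space not containing $c_0$, so their segment avoids $c_0$ — and conclude $\phi$ and $\psi$ agree in mod-2 degree (or $\Z$-degree) at $c_0$. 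Since $\psi$ is manifestly surjective onto $\mathcal{C}$ with degree one at $c_0$, so is $\phi$ onto $\operatorname{int}(\mathcal{C})$, and the closedness argument finishes it. A subtle point to get right along the way is that $D$ is only assumed continuous on $\tDomain = \R^{k+1}_{0+} \setminus \{0\}$, i.e. on $H_k$ after normalization, which is compact — so $\phi$ is continuous on the closed ball $H_k$, exactly what the degree argument requires; I would double-check that the normalization $t \mapsto t/\|t\|$ and the fact that $\ell_t = \ell_{t/\|t\|}$ up to positive scaling let us legitimately transfer the section $D$ from $\tDomain$ to $H_k$.
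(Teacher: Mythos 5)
Your proposal is correct and lands on the same underlying topological mechanism as the paper, though it is packaged differently. Both arguments hinge on two facts: (i) $\phi$ maps $\partial H_k \cong S^{k-1}$ into $\partial\cC$, and (ii) for $t$ on the boundary and any fixed interior point $c_0 \in \cC$, the supporting-hyperplane inequality $\langle (t_1,\dots,t_k),\, \phi(t) - c_0\rangle > 0$ holds strictly. The paper uses these to show $\partial\phi$ is a homotopy equivalence, via the radial projection $\psi(y) = (y - c_0)/\|y - c_0\|$ and a spherical-interpolation homotopy $H(t,s) = \frac{st + (1-s)\psi(\partial\phi(t))}{\|st + (1-s)\psi(\partial\phi(t))\|}$ (the denominator is shown nonzero using exactly inequality (ii)), and then closes with a homological lemma (\Cref{lem:homotopy_fact}). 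Your version replaces the spherical-interpolation step by a straight-line homotopy between $\phi|_{\partial H_k}$ and a radial ``model map,'' and replaces the homological lemma by a winding-number/degree argument at $c_0$: if $\phi$ missed $c_0$, then $\phi|_{\partial H_k}$ would be null-homotopic in $\R^k \setminus \{c_0\}$, contradicting nonzero degree. These are two dialects of the same degree-$(k-1)$ homology computation, so the result and its difficulty are comparable. One thing your version buys is that you never need to verify the radial projection $\partial\cC \to S^{k-1}$ is a homeomorphism (the paper does this via compact Hausdorff plus bijectivity), since you only need the model map's degree, not an explicit homotopy inverse. Conversely the paper's phrasing gives a clean reusable lemma with ``homotopy equivalence on the boundary'' as a hypothesis. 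Two small precision notes on your write-up: the worry about positive-dimensional faces is not actually an issue, since $D$ is a \emph{continuous} section by hypothesis, so $\phi$ is continuous regardless of face structure and the degree of its boundary restriction is well-defined — it is the strict supporting-hyperplane inequality, not uniqueness of exposed points, that drives the homotopy, exactly as your workaround observes; and the half-space you invoke should be taken \emph{open} (namely $\{y : \langle (t_1,\dots,t_k), y - c_0\rangle > 0\}$) so that a convex combination of two points in it cannot equal $c_0$ — the strict inequalities you already have furnish this.
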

   
The remainder of this section is devoted to proving \Cref{prop:surjective}. We will be using some topological notions in order to accomplish this.
Note that $H_k$ and $\cC$ are each homeomorphic to the closed ball of dimension $k$, $B^k = \{x \in \R^k : \|x\| \le 1\}$, so the map $\phi$ can be regarded, up to homeomorphism, as a continuous map from the unit ball to itself. This is the setting of \cref{lem:homotopy_fact} from algebraic topology, which will be the way we ultimately show the surjectivity of $\phi$.

Before stating \cref{lem:homotopy_fact}, we will need to recall the notion of a homotopy equivalence, which is also defined in \cite[Chapter 0]{MR1867354}.
If $\cM_1$ and $\cM_2$ are topological spaces, and $g_1, g_2 : \cM_1 \rightarrow \cM_2$ are continuous maps, then a \emph{homotopy} from $g_1$ to $g_2$ is given by a continuous map $H : \cM_1 \times [0,1] \rightarrow \cM_2$ such that $H(x, 0) = g_1(x)$ and $H(x,1) = g_2(x)$ for all $x \in \cM_1$. A \emph{homotopy inverse} for a map $g : \cM_1 \rightarrow \cM_2$ is a map $h : \cM_2 \rightarrow \cM_1$ so that there is a homotopy from the composition $g \circ h$ to the identity on $\cM_2$ and there is a homotopy from the composition $h \circ g$ to the identity on $\cM_1$. Finally, we say that $g : \cM_1 \rightarrow \cM_2$ is a \emph{homotopy equivalence} if there exists a homotopy inverse for $g$.

\begin{lemma}
\label{lem:homotopy_fact}
    Suppose that $\phi : B^k \rightarrow B^k$ is a continuous map. Let $\partial \, \phi$ be the restriction of $\phi$ to the boundary $\partial B^k$ and suppose that the image of $\partial{\phi}$ is contained in $\partial B^k$. If $\partial{\phi} : \partial B^k \rightarrow \partial B^k$ is a homotopy equivalence, then $\phi$ is surjective.
\end{lemma}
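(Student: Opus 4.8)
The plan is to prove \Cref{lem:homotopy_fact} by contradiction: assume some point $p$ in the interior of $B^k$ is not in the image of $\phi$, and use this to manufacture a retraction of $B^k$ onto $\partial B^k$ that restricts to a homotopy equivalence on the boundary, and then derive a contradiction from the no-retraction theorem together with the homotopy equivalence hypothesis. More precisely, if $p \notin \operatorname{im}(\phi)$, then $\phi$ factors through $B^k \setminus \{p\}$, and since $p$ is an interior point, there is a deformation retraction $r : B^k \setminus \{p\} \to \partial B^k$ (radial projection away from $p$). The composition $r \circ \phi : B^k \to \partial B^k$ is then a continuous map whose restriction to $\partial B^k$ is $r \circ \partial\phi$. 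If $p$ is chosen to be the center (we may pre-compose with a homeomorphism of $B^k$ fixing the boundary that moves $p$ to the origin, since any interior point can be moved to the center by such a homeomorphism), radial projection from $p$ is the identity on $\partial B^k$ only up to homotopy in general; but in any case $r|_{\partial B^k} : \partial B^k \to \partial B^k$ is homotopic to the identity (radial projection from an interior point restricted to the boundary sphere is degree one, hence homotopic to $\mathrm{id}_{\partial B^k}$).

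The key steps, in order, are: (1) Suppose for contradiction that $\phi$ is not surjective, and pick $p \in B^k \setminus \operatorname{im}(\phi)$; note $p$ must lie in the interior since $\operatorname{im}(\partial\phi) \subseteq \partial B^k$ already and we can argue $\operatorname{im}(\phi) \supseteq \partial B^k$ using that $\partial\phi$ is a homotopy equivalence (hence surjective onto $\partial B^k$), so the missed point is interior. (2) Build the radial retraction $r : B^k \setminus \{p\} \to \partial B^k$ and observe it is continuous and that $\rho := r|_{\partial B^k} : \partial B^k \to \partial B^k$ is homotopic to the identity. (3) Form $g := r \circ \phi : B^k \to \partial B^k$; this is a continuous map from the ball to its boundary sphere. (4) Observe that $\partial g = \rho \circ \partial\phi$ is a composition of a homotopy equivalence ($\partial\phi$) with a self-homotopy-equivalence of $\partial B^k$ ($\rho \simeq \mathrm{id}$), hence $\partial g$ is itself a homotopy equivalence $S^{k-1} \to S^{k-1}$, in particular homotopic to a homeomorphism, hence not nullhomotopic (for $k \ge 1$; the $k=0$ case should be checked separately or excluded as trivial). (5) But $g : B^k \to \partial B^k$ extends $\partial g$ over the contractible ball $B^k$, which forces $\partial g$ to be nullhomotopic — contradiction. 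Step (5) is the classical fact that a map $S^{k-1} \to X$ extends to $B^k \to X$ if and only if it is nullhomotopic, applied with $X = S^{k-1}$.

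The main obstacle I anticipate is being careful about the homotopy-theoretic bookkeeping in steps (2) and (4): one must verify that radial projection from an arbitrary interior point, restricted to the boundary, really is homotopic to the identity on $S^{k-1}$ (intuitively clear — slide $p$ to the center along a straight path, giving a homotopy of retractions — but it deserves an explicit argument), and one must invoke the right statement that a homotopy equivalence $S^{k-1}\to S^{k-1}$ composed with the restriction is still a homotopy equivalence, hence still non-nullhomotopic, so that the contradiction in step (5) actually bites. A cleaner alternative that sidesteps some of this is to phrase everything through reduced homology or degree: $\partial\phi$ a homotopy equivalence of $S^{k-1}$ implies $\deg(\partial\phi) = \pm 1$, radial projection has degree $1$ on the boundary, so $\deg(\partial g) = \pm 1 \ne 0$; but $\partial g$ bounds (extends over $B^k$) so its degree is $0$ — contradiction. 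I would likely present the degree-theoretic version for $k \ge 2$ and note the $k = 1$ and $k=0$ cases directly, since those are where "homotopy equivalence of $S^{k-1}$" and "degree" need slightly different handling.
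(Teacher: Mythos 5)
Your proof is correct and follows essentially the same Brouwer-style route as the paper: assume a missed interior point $p$, deformation-retract $B^k \setminus \{p\}$ onto $\partial B^k$, and derive a contradiction from the fact that the restricted boundary map cannot be both a homotopy equivalence of $S^{k-1}$ and extendable over the contractible ball (the paper phrases this as a commutative diagram in $H_{k-1}$, you phrase it via the extension criterion and degree, but these are the same fact). One simplification worth noting: radial projection from an interior point of $B^k$ fixes $\partial B^k$ pointwise, since a ray from an interior point of a convex body meets the boundary exactly once, so $\rho = r|_{\partial B^k}$ is literally the identity and the careful homotopy argument you anticipate for step (2) is unnecessary.
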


The proof of \Cref{lem:homotopy_fact} is very similar to the proof to the proof of the Brouwer fixed point theorem, and we include it for completeness in \Cref{app:proofs}.

We apply \cref{lem:homotopy_fact} to $\phi = \fminuszero \circ D$ to show \cref{prop:surjective}. We first need to show that the map $\partial{\phi} : S^{k-1} \rightarrow \cC$ maps onto the boundary of $\cC$ (where $S^{k-1} = \partial H_k = \{t \in \R^{k+1}: t_0 = 0, \|t\|=1\}$). This is clear because if $t \in S^{k-1}$, then $D(t)$ maximizes the linear function
\[\langle t, f(x) \rangle = \langle (t_1,\dots,t_k), \fminuszero(x)\rangle
\]
over $x \in \cM$, and thus $\fminuszero(D(t))$ must be on the boundary of $\cC$.

It remains to show that $\partial{\phi}$ is a homotopy equivalence between $S^{k-1}$ and $\partial \cC$. We do this in the following lemma:
\begin{lemma}
    Suppose that $\cC \subseteq \R^k$ is a full dimensional convex set, and that $\partial{\phi}: S^{k-1} \rightarrow \partial \cC$ satisfies the property that for any $t \in S^{k-1}$, 
    \[
    \partial{\phi}(t) \in \argmax_{y\in\cC} \langle t, y\rangle.
    \] 
    Then $\partial{\phi}$ is a homotopy equivalence.
\end{lemma}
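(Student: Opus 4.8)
The plan is to exhibit an explicit homotopy inverse for $\partial{\phi}$. The natural candidate is the \emph{normalized support-direction map} (essentially the Gauss map of $\partial\cC$): since $\cC$ is full-dimensional, we may assume after a translation that $0$ lies in the interior of $\cC$, and then for $y \in \partial\cC$ we let $\psi(y) \in S^{k-1}$ be the outer unit normal direction to a supporting hyperplane of $\cC$ at $y$. The issue is that $\cC$ need not be smooth or strictly convex, so $\psi$ is not a priori well-defined or continuous. To sidestep this, I would instead work with the radial projection $r : \partial\cC \to S^{k-1}$, $r(y) = y/\|y\|$, which is a genuine homeomorphism because $0$ is interior to the compact convex body $\cC$ (every ray from $0$ meets $\partial\cC$ exactly once, and $r$ is a continuous bijection between compact Hausdorff spaces, hence a homeomorphism). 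The real work is then to show that $\partial{\phi} : S^{k-1} \to \partial\cC$ is homotopic, as a self-map of $\partial\cC$ after composing with $r$ suitably, to this homeomorphism — equivalently, that $r \circ \partial{\phi} : S^{k-1} \to S^{k-1}$ is homotopic to the identity.

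First I would record the key geometric fact: for each $t \in S^{k-1}$, $\partial{\phi}(t)$ maximizes $\langle t, y\rangle$ over $\cC$, so $\langle t, \partial{\phi}(t)\rangle = h_\cC(t) > 0$ (positivity because $0$ is interior), and for all $y \in \cC$ we have $\langle t, y\rangle \le \langle t, \partial{\phi}(t)\rangle$. In particular $\langle t, \partial{\phi}(t)\rangle \ge \langle t, \partial{\phi}(s)\rangle$ for every $s$, and taking $s$ near $t$ this pins down the direction of $\partial{\phi}(t)$ relative to $t$. Then I would define the straight-line homotopy
\[
    G : S^{k-1} \times [0,1] \to \R^k, \qquad
    G(t, u) = (1-u)\,\partial{\phi}(t) + u\, t,
\]
and claim $G(t,u) \neq 0$ for all $t, u$, so that $t \mapsto G(t,u)/\|G(t,u)\|$ is a homotopy in $S^{k-1}$ from $r\circ\partial{\phi}$ to the identity. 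Non-vanishing follows from $\langle t, G(t,u)\rangle = (1-u)\,h_\cC(t) + u > 0$. This gives that $r \circ \partial{\phi}$ is homotopic to $\mathrm{id}_{S^{k-1}}$; since $r$ is a homeomorphism, $\partial{\phi} = r^{-1}\circ (r\circ\partial{\phi})$ is homotopic to $r^{-1}$, a homeomorphism, and therefore a homotopy equivalence. (Concretely, $r^{-1}$ is a two-sided homotopy inverse: $\partial{\phi}\circ r$ is homotopic to $r^{-1}\circ r = \mathrm{id}_{\partial\cC}$ via the image of $G$ under $r^{-1}$ composed appropriately, and $r\circ\partial{\phi}$ is homotopic to $\mathrm{id}_{S^{k-1}}$ as just shown.)

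I expect the main obstacle to be the continuity of $\partial{\phi}$ itself — the lemma's hypothesis only says $\partial{\phi}(t)$ lies in $\argmax_{y\in\cC}\langle t,y\rangle$, which could fail to be single-valued, so a priori $\partial{\phi}$ is just \emph{some} selection. However, in the application $\partial{\phi} = \fminuszero \circ D|_{S^{k-1}}$ is continuous by construction ($D$ is a Lagrangian dual section, hence continuous, and $\fminuszero$ is continuous), so I would state the lemma for continuous $\partial{\phi}$ and use continuity freely; the homotopy $G$ is then manifestly continuous and the argument goes through. A secondary point to be careful about is the reduction "assume $0 \in \mathrm{int}\,\cC$": translating $\cC$ changes $\partial{\phi}$ only by the same translation, which does not affect whether it is a homotopy equivalence, so this is harmless. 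The case $k = 1$ (where $S^{0}$ and $\partial\cC$ are two-point sets) should be checked separately but is immediate: $\partial{\phi}$ sends the two maximizing directions to the two endpoints of the segment $\cC$, hence is a bijection.
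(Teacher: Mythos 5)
Your proposal is correct and takes essentially the same approach as the paper: radial projection from an interior point of $\cC$ to $S^{k-1}$ serves as the homotopy inverse, and a straight-line homotopy (normalized back to the sphere) connects $r\circ\partial\phi$ to the identity, with non-vanishing guaranteed by the positivity of the support function. The only cosmetic differences are that the paper keeps the interior point $y_0$ general rather than translating it to the origin, interpolates between the already-normalized point $\psi(\partial\phi(t))$ and $t$ rather than between $\partial\phi(t)$ and $t$, and explicitly conjugates the homotopy by the homeomorphism rather than invoking the fact that a map homotopic to a homotopy equivalence is itself one; all three choices are equivalent in effect.
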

\begin{proof}
    Fix some $y_0$ in the interior of $\cC$, and let $\psi : \partial \cC \rightarrow S^{k-1}$ be defined by $\psi(y) = \frac{y-y_0}{\|y-y_0\|}$. This map $\psi$ is clearly continuous, and standard results from convex geometry show that $\psi$ is a bijection. Since $S^{k-1}$ is a compact Hausdorff space, this implies that $\psi$ is a homeomorphism. We claim that $\psi$ is a homotopy inverse to $\partial {\phi}$.

    To see this, we first define the map 
    \[
        H: S^{k-1} \times [0,1] \rightarrow S^{k-1},
        \quad 
        (t,s) \mapsto \frac{s t + (1-s)\psi(\partial{\phi}(t))}{\|s t + (1-s)\psi(\partial{\phi}(t))\|},
    \]
   which projects a convex combination of $t$ and $\psi(\partial{\phi}(t))$ onto the sphere. 
    To show that $H$ is well-defined, we must show $s t + (1-s)\psi(\partial{\phi}(t)) \neq 0$. To see this, note first the equality
    \begin{equation}\label{eq:abs-H-denom}
        \langle t, s t + (1-s)\psi(\partial{\phi}(t)) \rangle  
        \,=\, 
        s \|t\|^2 + (1-s) \frac{\langle t, \partial\phi(t) - y_0\rangle}{\|\partial{\phi}(t) - y_0\|}.
    \end{equation}
    Since $\partial{\phi}(t)$ maximizes $\langle t, y\rangle$ on $\cC$ and $y_0$ is in the interior of $\cC$, we have $\langle t, \partial{\phi}(t) - y_0\rangle > 0$ for all $t \in S^{k-1}$. Thus,~\eqref{eq:abs-H-denom} is positive for all $t, s$, and in particular, $s t + (1-s)\psi(\partial{\phi}(t)) \neq 0$.
    Note that $H(t,0) = \psi(\partial{\phi}(t))$  and $H(t,1) = t$, which shows that $H$ is a homotopy from $\psi \circ \partial \phi$ to the identity map.

    Since $\psi$ is a homeomorphism, we can conjugate $H$ by the map $\psi$ to obtain a homotopy from $\partial{\phi} \circ \psi$ to the identity, i.e. we may set
    \[
        H': \partial \cC \times I \rightarrow  \partial \cC,\;H'(y,s) = \psi^{-1}(H(\psi(y), s)).
    \]
    This is also continuous, and it satisfies the property that 
    \[H'(y,0) = \psi^{-1}(\psi(\partial \phi(\psi(y)))) = \partial{\phi}(\psi(y))\] and $H'(y,1) = y$.
    This shows that $\partial{\phi}$ is a homotopy equivalence.
\end{proof}

\subsection{Unique Maximization and Lagrangian Dual Sections}
\label{sec:unique_maximization}
Here we give a simple criterion for a function to be sectioned.
\begin{definition}
We say that  
$f : \cM \rightarrow \R^{k+1}$ is \emph{uniquely maximized}, if for each $t \in  \Rplus^{k+1}$, 
the function $\ell_t:\cM\to\mathbb{R}$, 
$x\mapsto \langle t, f(x)\rangle$
has a unique maximizer.
\end{definition}

\medskip
\begin{lemma}
\label{lem:unique_max}
    Fix a compact metric space $\cM$ and a continuous function $f : \cM \rightarrow \R^{k+1}$ and its associated Lagrangian $\cL(t,x)$. If $f$ is uniquely maximized, then $f$ is sectioned.
\end{lemma}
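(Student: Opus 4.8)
The goal is to show that if the Lagrangian $\ell_t$ has a unique maximizer for every $t \in \tDomain$, then the map $D : \tDomain \to \cM$ sending $t$ to that unique maximizer is continuous, hence a Lagrangian dual section. The plan is to argue continuity directly from compactness and a standard closed-graph / subsequence argument. Concretely, I would first observe that the Lagrangian bundle $\cB$ is a closed subset of $\tDomain \times \cM$: if $(t^{(j)}, x^{(j)}) \in \cB$ with $t^{(j)} \to t$ and $x^{(j)} \to x$, then for any $z \in \cM$ we have $\langle t^{(j)}, f(x^{(j)})\rangle \ge \langle t^{(j)}, f(z)\rangle$, and passing to the limit using continuity of $f$ and of the inner product gives $\langle t, f(x)\rangle \ge \langle t, f(z)\rangle$, so $(t,x) \in \cB$. (One must check $\argmax_{z} \langle t,f(z)\rangle$ is nonempty, which holds by compactness of $\cM$ and continuity of $f$, so $D$ is at least well-defined as a function.)

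Next I would prove continuity of $D$ at an arbitrary point $t \in \tDomain$ by a sequential argument (valid since $\cM$ is a metric space, hence first countable). Suppose $t^{(j)} \to t$; I want $D(t^{(j)}) \to D(t)$. Since $\cM$ is compact, every subsequence of $(D(t^{(j)}))$ has a further subsequence converging to some limit $x^* \in \cM$. Along that sub-subsequence, $(t^{(j)}, D(t^{(j)})) \in \cB$ converges to $(t, x^*)$, and by closedness of $\cB$ we get $(t, x^*) \in \cB$, i.e. $x^*$ is a maximizer of $\ell_t$. By the uniqueness hypothesis, $x^* = D(t)$. Thus every subsequence of $(D(t^{(j)}))$ has a further subsequence converging to the single point $D(t)$; a standard fact in metric (or more generally Hausdorff, together with compactness) spaces then forces the whole sequence $D(t^{(j)})$ to converge to $D(t)$. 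This establishes continuity of $D$ on all of $\tDomain$, and since by construction $(t, D(t)) \in \cB$ for every $t$, the map $D$ is a Lagrangian dual section, so $f$ is sectioned.

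I do not anticipate a serious obstacle here; the argument is the familiar "continuous selection from a maximization with unique optimizer over a compact set" lemma (a Berge-type maximum theorem specialized to the single-valued case). The one point requiring a little care is the domain: $\tDomain = \{t : t_0 \ge 0,\ t \neq 0\}$ is not compact and not closed in $\R^{k+1}$ (the origin is excluded), but this causes no trouble because we only ever take limits of sequences $t^{(j)} \to t$ with the limit $t$ already assumed to lie in $\tDomain$; compactness is only needed on the $\cM$ factor, which is given. A second minor point is ensuring well-definedness of $D$ (nonemptiness of each $\argmax$), which follows from the extreme value theorem on the compact metric space $\cM$. If one wants to avoid sequences entirely, the same proof goes through with nets and the observation that $\cM$ compact Hausdorff suffices; but since the hypothesis already supplies a metric, the sequential version is cleanest.
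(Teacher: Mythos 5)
Your proof is correct and follows essentially the same route as the paper: a sequential continuity argument using compactness of $\cM$ to extract convergent subsequences of $D(t^{(j)})$, and the uniqueness hypothesis to identify the limit with $D(t)$. The only cosmetic difference is that you package the limit-passing step as ``closedness of the bundle $\cB$'' and you explicitly invoke the every-subsequence-has-a-further-subsequence criterion, whereas the paper passes to a single subsequence and verifies the maximizer inequality directly; these are the same argument.
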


\begin{proof}
\label{prf:unique_max}
Let $D(t)$ be the unique maximizer of the function $\langle t, f(x)\rangle$ for each $t \in \Rplus^{k+1}$.
Because $\cM$ is a metric space, it suffices to show that if $\left(t^{(i)}\right)_{i\ge 1} $ is a sequence in $\tDomain $ and $\lim_{i \rightarrow \infty} t^{(i)} = t_*$, then $\lim_{i \rightarrow \infty} D(t^{(i)}) = D(t_*)$.
We may assume that $\{D(t^{(i)})\}_{i=1}^{\infty}$ in fact has a limit $x_*$ by passing to a subsequence and using the compactness of $\cM$. We need to prove that $x_* = D(t_*)$.

Since $D(t_*)$ is the unique maximizer of $\langle t_*, f(x)\rangle$, it suffices to establish the claim that $x_*$ also maximizes this function.
That is, for any $x \in \cM$, we wish to argue that 
\[
    \langle t_*, f(x_*) - f(x) \rangle \ge 0.
\]
Note that 
\[
    \langle t_*, f(x_*)-f(x) \rangle = \lim_{i\rightarrow \infty} \langle t^{(i)}, f(D(t^{(i)})) - f(x) \rangle.
\]
We have that $\langle t^{(i)}, f(D(t^{(i)})) - f(x) \rangle > 0$ for all $i$ by the definition of $D(t^{(i)})$, so that the limit is also nonnegative. This proves the claim. 
\end{proof}

A property of uniquely maximized functions is that they behave well under composition with monotonic increasing functions.

\begin{lemma}
\label{lem:concave_composition}
    Suppose that $\tilde f = (\tilde f_0, \tilde f_1, \dots, \tilde f_{k+1}) : \cM \rightarrow \R^{k+1}$ is uniquely maximized. If $g : \R \rightarrow \R$ is any monotonically increasing strictly concave function, then the function $f(x) = (g(\tilde f_0(x)),\tilde f_1(x) \dots, \tilde f_{k+1}(x))$ is uniquely maximized.
\end{lemma}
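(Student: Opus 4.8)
The plan is to fix $t \in \Rplus^{k+1}$ and show that $\ell_t(x) = \langle t, f(x)\rangle = t_0 g(\tilde f_0(x)) + \sum_{i\ge 1} t_i \tilde f_i(x)$ has a unique maximizer. First I would dispose of the trivial case $t_0 = 0$: then $\ell_t(x) = \langle (t_1,\dots,t_{k+1}), (\tilde f_1(x),\dots,\tilde f_{k+1}(x))\rangle$, which is exactly $\ell_{t'}$ for $\tilde f$ with $t' = (0,t_1,\dots,t_{k+1}) \in \Rplus^{k+1}$, so uniqueness of the maximizer is inherited directly from the hypothesis that $\tilde f$ is uniquely maximized. (One should note that $t' \neq 0$ since $t \neq 0$.)

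For the main case $t_0 > 0$, the idea is that maximizing $\ell_t$ over $\cM$ is equivalent to a constrained problem: introduce the variable $u = \tilde f_0(x)$ and think of the maximization as first ranging over the achievable pairs $(u, v)$ with $v = (\tilde f_1(x),\dots,\tilde f_{k+1}(x))$. Suppose $x_1, x_2 \in \cM$ are both maximizers of $\ell_t$. I would argue that they must have the same value of $\tilde f_0$: if $\tilde f_0(x_1) \neq \tilde f_0(x_2)$, consider the point $f(x_1) $ versus $f(x_2)$ in the image, and use strict concavity of $g$ together with the fact that $g$ is applied only to the zeroth coordinate. Concretely, here is the cleaner route: since $x_1, x_2$ maximize $\ell_t$, for every $x$ we have $t_0 g(\tilde f_0(x_j)) + \langle t_{1:}, \tilde v(x_j)\rangle \ge t_0 g(\tilde f_0(x)) + \langle t_{1:}, \tilde v(x)\rangle$. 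I want to promote these $x_j$ to maximizers of a $\tilde f$-Lagrangian. Define $s = (s_0, t_1,\dots,t_{k+1})$ where $s_0 = t_0 g'(w)$ for the common value $w := \tilde f_0(x_1) = \tilde f_0(x_2)$ — but establishing this common value is the crux.

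The key step, then, is: \emph{any maximizer of $\ell_t$ has the same $\tilde f_0$-value.} To see this, suppose for contradiction $w_1 := \tilde f_0(x_1) < w_2 := \tilde f_0(x_2)$ among maximizers. Both achieve the same optimal value $M$ of $\ell_t$. Now consider the function on $\cM$ given by $h_s(x) = t_0 s \tilde f_0(x) + \langle t_{1:}, \tilde v(x)\rangle$ for $s > 0$; this is $\ell_{t^{(s)}}$ for $\tilde f$ with $t^{(s)} = (t_0 s, t_1, \dots, t_{k+1}) \in \Rplus^{k+1}$, hence has a \emph{unique} maximizer $x^\star(s)$ by hypothesis. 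The concave function $g$ lies below each of its tangent lines and strict concavity makes the touching point unique; writing $g(u) \le g(w_j) + g'(w_j)(u - w_j)$ with equality iff $u = w_j$, one sees that if $s = g'(w_1)$ then $x_1$ maximizes $h_s$ (plug in: $\ell_t(x) \le t_0 g(w_1) + t_0 g'(w_1)(\tilde f_0(x) - w_1) + \langle t_{1:},\tilde v(x)\rangle$, i.e. $M = \ell_t(x_1) \le h_{g'(w_1)}(x_1) - t_0 g(w_1) + t_0 g'(w_1) w_1$, and $\ge$ for the sup of $\ell_t$), so $x_1 = x^\star(g'(w_1))$; likewise $x_2 = x^\star(g'(w_2))$. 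Since $w_1 \ne w_2$ and $g'$ is strictly decreasing (strict concavity), $g'(w_1) \ne g'(w_2)$, which is fine — but now I would derive a contradiction by a monotonicity/continuity argument in $s$: the function $s \mapsto \tilde f_0(x^\star(s))$ is monotone (a standard consequence of optimality: for $s < s'$, comparing the two optimality inequalities for $x^\star(s)$ and $x^\star(s')$ gives $(s - s')(\tilde f_0(x^\star(s)) - \tilde f_0(x^\star(s'))) \ge 0$, so it is nondecreasing in $s$) while $g'$ is strictly decreasing, and the requirement that \emph{both} $x_1$ and $x_2$ maximize the \emph{same} $\ell_t$ forces $h_{g'(w_1)}$ and $h_{g'(w_2)}$ to share the common value structure that pins $\tilde f_0(x^\star(g'(w_1))) = w_1$ and $\tilde f_0(x^\star(g'(w_2))) = w_2$ with $w_1 < w_2$ but $g'(w_1) > g'(w_2)$ — contradicting monotonicity of $s \mapsto \tilde f_0(x^\star(s))$. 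Hence all maximizers of $\ell_t$ share the value $w$, and then with $s_0 = t_0 g'(w) > 0$ they are all maximizers of $\ell_{(s_0,t_1,\dots,t_{k+1})}$ for $\tilde f$, which has a unique maximizer; so $\ell_t$ does too.

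The main obstacle I anticipate is making the monotonicity-in-$s$ argument airtight and correctly handling the direction of the inequalities, since the composition reverses the roles of $g$ and $g'$ (larger $\tilde f_0$ corresponds to smaller slope $s = g'(w)$). An alternative, possibly cleaner, packaging avoids the parametric family: directly show that if $x_1 \ne x_2$ both maximize $\ell_t$ with $\tilde f_0(x_1) \le \tilde f_0(x_2)$, then for $s^\ast$ between $g'(w_2)$ and $g'(w_1)$ (equivalently, the slope matching the ``chord'' of $g$ between $w_1$ and $w_2$), one checks via strict concavity that both $x_1$ and $x_2$ are maximizers of $\ell_{(t_0 s^\ast, t_1,\dots,t_{k+1})}$ only if $w_1 = w_2$ — and then the non-uniqueness of the maximizer of that $\tilde f$-Lagrangian contradicts the hypothesis. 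I would compare both write-ups and keep whichever referee-checks most transparently; the strict-concavity ``chord below/tangent above'' bookkeeping is the delicate part either way.
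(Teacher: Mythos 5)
Your high-level plan---show that every maximizer of $\ell_t$ shares the same $\tilde f_0$-value and then fall back on unique maximization of $\tilde f$---is sensible, and you correctly identify that establishing the common value $w$ is ``the crux.'' However, the step you propose for this crux does not go through. You claim that if $x_1 \in \argmax \ell_t$ with $w_1 = \tilde f_0(x_1)$, then $x_1 \in \argmax h_{g'(w_1)}$, and justify this by the tangent-line bound $g(u) \le g(w_1) + g'(w_1)(u-w_1)$. Plugging in, that bound gives $\ell_t(x) \le C + h_{g'(w_1)}(x)$ for all $x$, with equality at $x=x_1$, where $C = t_0 g(w_1) - t_0 g'(w_1) w_1$. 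This shows that $x_1$ maximizes the \emph{difference} $\ell_t - h_{g'(w_1)}$, not $h_{g'(w_1)}$ itself: if some $x'$ had $h_{g'(w_1)}(x') > h_{g'(w_1)}(x_1)$, the inequality $\ell_t(x') \le C + h_{g'(w_1)}(x')$ is a one-sided upper bound and yields no contradiction with $\ell_t(x') \le \ell_t(x_1)$. The same issue afflicts your ``chord'' alternative: one can verify $h_{s^\ast}(x_1) = h_{s^\ast}(x_2)$, but the tangent/chord inequalities do not force either point to be a global maximizer of $h_{s^\ast}$. So the intermediate claim is asserted without a proof.

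The claim is in fact true, but establishing it requires the concavity of the optimal value function $V(c) = \max\{\tilde f_0(x): \tilde f_1(x)=c_1,\dots,\tilde f_k(x)=c_k,\, x\in\cM\}$, which is exactly what the paper invokes (via \Cref{thm:hidden_cvx}, using that uniquely-maximized implies sectioned). The paper works directly in $c$-space: since $V$ is concave and $g$ is monotone strictly concave, $p(c) = t_0\, g(V(c)) + \langle (t_1,\dots,t_k), c\rangle$ is strictly concave and so has a unique maximizer $c^*$; then a separating-hyperplane argument at the boundary point $(V(c^*),c^*)$ of $\conv \tilde f(\cM)$ produces a $t'$ with the $\tilde f$-Lagrangian uniquely maximized there, giving a unique preimage. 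In effect, your ``promote $x_1$ to a maximizer of an $\tilde f$-Lagrangian'' is exactly this supporting-hyperplane step, but it cannot be carried out with the raw tangent inequality alone. Once you use concavity of $V$ (or, equivalently, a supporting hyperplane for $\conv \tilde f(\cM)$) the route can be completed, and your observation that $s\mapsto \tilde f_0(x^\star(s))$ is nondecreasing while $g'$ is strictly decreasing is then a fine way to conclude---but as written the argument has a genuine gap at the step you flagged.
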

\begin{proof}
We have seen that the function
\[
    V(c) = \max \{\tilde f_0(x)  : \tilde f_1(x) = c_1, \dots, \tilde f_k(x) = c_k, x \in \cM\}
\]
is concave because $f$ is sectioned.
If we let
\[
    \widetilde{V}(c) = \max \{g(\tilde f_0(x))  : \tilde f_1(x) = c_1, \dots, \tilde f_k(x) = c_k, x \in \cM\},
\]
then it follows from the monotonicity of $g$ that $\widetilde{V}(c) = g(V(c))$. Since $g$ is both monotonically increasing and strictly concave, the function  $\widetilde{V}(c)$ is also strictly concave.

Fix a $t \in \tDomain$, and consider the maximum of $\langle t, f(x)\rangle$. This is equal to the maximum value of the function
\[
    p(c) = t_0g(V(c)) + \langle (t_1,\dots,t_k), (c_1,\dots,c_k)\rangle,
\]
over $c$.
Because $g(V(c))$ is strictly concave, $p(c)$ is strictly concave, and in particular, $p(c)$ has a unique maximizer, say $c^*$. 

We now want to claim that there is a unique $x \in \cM$ so that $f(x) = (V(c^*), c^*)$. To see this, note that $(V(c^*), c^*)$ is on the upper boundary of $\conv f(\cM)$, and therefore, by the separating hyperplane theorem, there exists a $t' \in \R_{\ge 0} \times \R^k$ such that $\max \langle t', f(x)\rangle = \langle t', (V(c^*), c^*)\rangle$. By unique maximization of $f$, we have therefore that there is a unique $x^*$ maximizing $\langle t', f(x)\rangle$ on $\cM$, and hence there is a unique $x^*$ mapping to $(V(c^*), c^*)$, which is also consequently the unique maximizer of $\langle t, \tilde{f}(x)\rangle$.
\end{proof}

\subsection{Connectedness of the Fibers of the Lagrangian Bundle}
\label{subsec:connected_fibers}
In the special case when there is $k=1$ constraint, there is a simple topological condition on the fibers of the Lagrangian bundle of $f$ which implies the existence of a convex reparametrization, namely, that all of the fibers are connected.

\begin{theorem}
\label{thm:connected_fibers}

    Let $\cM$ be a compact topological space, and $\cB$ be the Lagrangian bundle associated to $f : \cM \rightarrow \R^2$, as defined in~\eqref{eq:bundle}. If for each $t \in \Rplus^{2}$ the fiber 
    $\cM_t = \{x \in \cM : (t,x) \in \cB\}$ is connected, then the function $V_f(c)$ is concave.
\end{theorem}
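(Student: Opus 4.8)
The plan is to reduce to the same strong-duality argument used in the proof of \Cref{thm:hidden_cvx}, replacing the role of the Lagrangian dual section with a selection argument made possible by the connectedness hypothesis. As before, set $\fminuszero(x) = f_1(x)$ (a scalar now, since $k=1$), let $\cC = \conv \fminuszero(\cM) \subseteq \R$, which is a compact interval $[\underline c, \overline c]$, and define $\widehat V_f(c) = \inf_{\lambda \in \R} \sup_{x \in \cM} \cL\bigl(\begin{smallmatrix} 1 \\ \lambda \end{smallmatrix}, x\bigr) - \lambda c$. Weak duality gives $\widehat V_f(c) \ge V_f(c)$ exactly as in the compact case, and the separating hyperplane theorem again identifies $\{c : \widehat V_f(c) \ne -\infty\}$ with $\cC$. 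So the entire content is to prove the reverse inequality $V_f(c) \ge \widehat V_f(c)$ for every $c$ in the interior of $\cC$ (the endpoints being handled by the limiting argument $\lambda^{(j)} = j(t_1)$ already in the excerpt, or directly since those values are attained at vertices of $\conv f(\cM)$).

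The key step is to show that for every $c$ in the interior of $\cC$ there exists $t \in \Rplus^2$ and a point $x \in \cM_t$ with $\fminuszero(x) = c$; then the lemma in \cref{subsec:main_proof} (whose proof only uses $(t, x) \in \cB$, not that $x$ came from a continuous section) delivers $V_f(c) \ge \widehat V_f(c)$. Here is where connectedness enters. Parametrize $H_1 = \{t \in \R^2_{0+} : \|t\| = 1\}$ by an angle $\theta \in [0, \pi/2]$, with $\theta = 0$ giving $t = (0,1)$ and $\theta = \pi/2$ giving $t = (0,-1)$ (up to sign conventions on which endpoint maximizes versus minimizes $\fminuszero$). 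For $\theta \in \{0, \pi/2\}$ the fiber $\cM_\theta$ consists of maximizers of $\pm \fminuszero$, so $\fminuszero(\cM_0) = \{\overline c\}$ and $\fminuszero(\cM_{\pi/2}) = \{\underline c\}$ — in particular $\fminuszero$ is constant on these two endpoint fibers. Now consider the set $Z = \{(\theta, x) : x \in \cM_\theta\} \subseteq [0,\pi/2] \times \cM$; this is exactly (a reparametrization of) the bundle $\cB$ intersected with the sphere, and it is closed, hence compact. The map $(\theta, x) \mapsto \fminuszero(x)$ on $Z$ has image containing both endpoints $\underline c, \overline c$; if I can show this image is an interval (i.e. connected), I am done, since then it is all of $[\underline c, \overline c] \supseteq$ the interior of $\cC$.

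The main obstacle — and the place where the connectedness-of-fibers hypothesis is essential — is proving that $\fminuszero(Z)$ is connected. The natural route is to show that $Z$ itself is connected, and then invoke continuity of $\fminuszero$. Connectedness of $Z$ should follow from a standard gluing argument: the projection $\pi : Z \to [0,\pi/2]$, $\pi(\theta,x) = \theta$, is a closed surjective map whose base $[0,\pi/2]$ is connected and each of whose fibers $\pi^{-1}(\theta) = \{\theta\} \times \cM_\theta$ is connected by hypothesis; a closed surjection with connected base and connected fibers has connected total space (this is where one uses that $\cM$, hence $Z$, is compact Hausdorff so that $\pi$ is a closed map, and a short argument with a clopen set $U \subseteq Z$: its image and complement's image would partition the base unless $U$ meets some fiber in a proper clopen subset, contradicting fiber-connectedness). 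I would state and prove this topological gluing fact as a small lemma. An alternative, if one wants to avoid a general lemma, is to argue directly: suppose $\fminuszero(Z) = S_1 \sqcup S_2$ with $S_1, S_2$ nonempty, relatively open, $\underline c \in S_1$, $\overline c \in S_2$; pull back to get a clopen partition of $Z$, look at the "first" angle $\theta^*$ at which the partition "switches" (using compactness to see $\theta^*$ is attained and lies in one of the pieces), and derive a contradiction with connectedness of $\cM_{\theta^*}$ together with upper semicontinuity of the fiber map (the fact that $\cM_{\theta^{(i)}} \to$ a subset of $\cM_{\theta^*}$ as $\theta^{(i)} \to \theta^*$, which follows from closedness of $\cB$). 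Either way, once $\fminuszero(Z)$ is an interval containing both endpoints of $\cC$, it contains all of $\mathrm{int}(\cC)$, the surjectivity-type claim is established, and concavity of $V_f$ follows exactly as in \Cref{subsec:main_proof}.
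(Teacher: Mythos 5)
Your proof is correct, but it follows a genuinely different route from the paper's. The paper's proof of \Cref{thm:connected_fibers} is a short, direct convex-geometry argument: it sets $\widehat V_f(c) = \max\{y_0 : y_1 = c, \ y \in \conv f(\cM)\}$, picks the maximizer $\hat y$ on the boundary of $\conv f(\cM) \subseteq \R^2$, and observes that $\hat y$ lies in a face of dimension $\le 1$. A zero-dimensional face is an extreme point and hence lies in $f(\cM)$; a one-dimensional face is a segment $[y^{(1)}, y^{(2)}]$ spanned by extreme points of $f(\cM)$, exposed by some $t \in \R_{\ge 0} \times \R$, and the connectedness of $\cM_t$ then forces $f(\cM_t)$ to be the entire segment, so $\hat y \in f(\cM)$. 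By contrast, you rerun the strong-duality-plus-surjectivity machinery of the proof of \Cref{thm:hidden_cvx}, replacing the homotopy/homology surjectivity argument (\Cref{lem:homotopy_fact}) by a topological gluing lemma -- a continuous closed surjection with connected base and connected fibers has connected total space -- followed by the intermediate value theorem on $\R$. What your route buys is a cleaner conceptual link between \Cref{thm:connected_fibers} and \Cref{thm:hidden_cvx}: it exhibits the connectedness hypothesis as the precise substitute for the existence of a section when $k=1$, still yielding ``surjectivity'' of $\fminuszero$ onto $\mathcal{C}$. What the paper's route buys is brevity and a proof that needs no auxiliary lemma beyond elementary convex geometry in the plane. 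The ingredients in your sketch all check out: $Z = \cB \cap (H_1 \times \cM)$ is closed (it is the zero set of $(t,x) \mapsto \max_z \langle t, f(z)\rangle - \langle t, f(x)\rangle$, whose first term is continuous by compactness of $\cM$), hence compact, so $\pi$ is a closed map since the codomain is Hausdorff, and your clopen-set argument for the gluing lemma is standard and correct. One small slip: the half-circle $H_1 = \{t \in \R^2_{0+} : \|t\|=1\}$ is parametrized by $\theta \in [0,\pi]$, not $[0, \pi/2]$; this does not affect the argument since you correctly identify the two endpoint fibers as the maximizers and minimizers of $\fminuszero$.
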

\begin{proof}
Let
\[
    \widehat{V}_f(c) = \max \{ y_0 : y_1 = c_1, y \in \conv f(\cM)\}.
\]
We will again argue that $\widehat V_f(c) = V_f(c)$ for all $c \in \R$. We note that $\widehat V_f(c) \ge V_f(c)$ for all $c$, since $\conv f(\cM) \supseteq f(\cM)$.
Fix some $c$ so that $\widehat V_f(c) \neq -\infty$, and $\hat{y} \in \conv f(\cM)$ satisfying $\hat{y}_1 = c_1$ and also $\hat{y}_0 = \widehat V_f(c)$.
It is clear that if $\hat{y}$ cannot be in the interior of $\conv f(\cM)$ and still maximize $y_0$ subject to $y_1 = c_1$. It follows that $\hat y$ must be on the boundary of $\cM$, and therefore, it is contained in a face of $\conv f(\cM)$ of dimension at most one. If $\hat y$ is contained in a zero-dimensional face of $\conv f(\cM)$, then it is an extreme point, and hence in $f(\cM)$, and this implies that $V_f(c) \ge \widehat V_f(c)$. 

Assume now that $\hat{y}$ is contained in a one-dimensional face of $\conv f(\cM)$, which must be a line segment $[y^{(1)}, y^{(2)}]$ spanned by two extreme points $y^{(1)}, y^{(2)} \in f(\cM)$.
Thus, there exists some $t \in \R_{\ge 0} \times \R$ so that the linear function $\langle t, y\rangle$ on $\conv f(\cM)$ attains its maximum value $v$ in the interval $[y^{(1)}, y^{(2)}]$. 
Since $\cM_t = \{x \in \cM : \langle t, f(x)\rangle = v \}$ is connected, so is the set $f(\cM_t) \subseteq [y^{(1)}, y^{(2)}].$
Moreover, as $y^{(1)}, y^{(2)} \in \cM_t ,$ we have $f(\cM_t) = [y^{(1)}, y^{(2)}].$
In particular, $\hat{y} \in f(\cM_t)$, so that $V_f(c) = \widehat V_f(c)$.
\end{proof}

\begin{remark}
\Cref{thm:connected_fibers} can be used to give a simple proof of the Schur-Horn theorem for $3\times 3$ symmetric matrices.
It also gives a simple proof of the fact that, for example, if $f : S^{n-1} \rightarrow \R^2$ is a quadratic map $f(x) = (x^{\intercal}A_0x, x^{\intercal}A_1x)$ for symmetric matrices $A_0, A_1$, then $V_f(c)$ is concave. In this case, for any $t \in \R^2$, it is not hard to see that the fiber of the Lagrangian bundle at $t$ is given by 
\[
\{(t, x) : x \in S^{n-1}, x \text{ is an eigenvector of }t_0A_0 + t_1A_1 \text{ with maximum eigenvalue}\},
\]
which is clearly connected. Similar simple proofs also apply in the settings of inverse eigenvalue and singular value problems with one constraint.
\end{remark}

\section{Lagrangian Dual Sections for Linear Inverse Spectral Problems}\label{sec:noncrossing}

This section explains connections between the various notions of noncrossing subspaces and Lagrangian dual sections. These noncrossing subspaces 
are related to representations of Clifford algebras and have been studied in physics \cite{crossing, von1993verhalten}.
We show that these noncrossing subspaces are intimately connected to the exactness of certain SDP for linear inverse spectral problems.  In particular, we prove \Cref{thm:eigenvalues,thm:singvalues} by showing that certain maps associated to the linear inverse eigenvalue and singular value problems are sectioned. We prove these results in \Cref{lem:noncrossing,lem:sing} in \Cref{subsec:liep,subsec:lisv} respectively.

It turns out that we can give a unified proof of both of these results in the context of polar representations of Lie groups using the Kostant convexity theorem (and its variants). We discuss this approach and the common generalization of our results which is afforded by this approach in \Cref{subsection:lie}.  (Despite this unifying generalization, we still retain the exposition in \Cref{subsec:liep,subsec:lisv} focusing on the examples of linear inverse spectral problems as the proofs in those cases only involve well-known facts from linear algebra.)

\subsection{Preliminaries on Convex Reparametrizations of Linear Inverse Spectral Problems}
We discuss some preliminaries pertaining to convex reparametrization of linear spectral problems that we use throughout this section.  We recall from~\Cref{def:noncrossing-subspace} the notion of an ($\ell$-weakly) noncrossing subspace, and from~\Cref{def:singular-noncrossing-subspace} its singular-value analog.
For $X \in \R^{n \times n}_{sym}$, let $\lambda_1(X) \ge \dots \ge \lambda_n(X)$ denote the eigenvalues of $X$ (accounting for multiplicity). Similarly, for $X \in \R^{n \times m}$ with $m \le n$, let $\sigma_1(X) \ge \dots \ge \sigma_m(X)$ denote the singular values of $X$. 
For $\lambda \in \R^n$, we recall
\[
    \cM_{\lambda} = \{X \in \R^{n \times n}_{sym} : \forall i = 1, \dots, n,\; \lambda_i(X) = \lambda_i\}.
\]
Similarly, for $\sigma \in \R^m$,  
\[
    \cM^{(n,m)}_{\sigma} = \{X \in \R^{n \times m} : \forall i = 1, \dots, m,\; \sigma_i(X) = \sigma_i\}.
\]

The following proposition shows that if $f$ is a linear map, then the concavity of $V_f$ is sufficient to show that replacing $\cM$ with $\conv(\cM)$ in the constrained optimization problem \eqref{eq:constrained} gives the same optimal value function. Thus, \cref{thm:eigenvalues} and \cref{thm:singvalues} reduce to showing that $V_f$ is concave under the hypotheses of the respective theorems, which we will ultimately do by showing that $f$ is sectioned.
\newcommand{\barf}{f}
\begin{proposition}
    \label{prop:linear_proj}
    Suppose that $\cM \subseteq \R^N$ is a compact set and that $\barf : \R^N \rightarrow \R^{k+1}$ is a  linear map. If $V_f(c) = \max \{f_0(x) : f_1(x) = c_1, \dots, f_k(x) = c_k, x \in \cM\}$ is concave, then
    \begin{align*}
        V_f(c) = \max \{\barf_0(x) : \barf_1(x) = c_1, \dots, \barf_k(x) = c_k, x \in \conv(\cM)\}.
    \end{align*}
\end{proposition}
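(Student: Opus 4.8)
The plan is to prove that the two optimal value functions coincide by establishing inequalities in both directions. Write $\bar V_f(c)$ for the right-hand side of the claimed identity, i.e.\ the maximum of $f_0$ over $\{x \in \conv(\cM) : f_1(x) = c_1, \dots, f_k(x) = c_k\}$, and recall the shorthand $\fminuszero(x) = (f_1(x),\dots,f_k(x))$. Since $\cM \subseteq \conv(\cM)$, every point feasible for the original problem is feasible for the relaxed one, so $V_f(c) \le \bar V_f(c)$ for all $c$ with no effort; all the content is in the reverse inequality $\bar V_f(c) \le V_f(c)$.

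First I would dispose of the degenerate case: if $\{x \in \conv(\cM) : \fminuszero(x) = c\}$ is empty, then the analogous set over $\cM$ is also empty, so both values equal $-\infty$ and there is nothing to prove. So assume this set is nonempty. Because $\cM$ is compact in the finite-dimensional space $\R^N$, its convex hull $\conv(\cM)$ is compact \cite[Corollary 2.4]{barvinok}, and since $\fminuszero$ is linear (hence continuous) the feasible set is a closed subset of a compact set; as $f_0$ is continuous, $\bar V_f(c)$ is attained at some $x^\star \in \conv(\cM)$.

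Next I would decompose $x^\star$ using Carathéodory's theorem, writing $x^\star = \sum_{j} \mu_j y_j$ with $y_j \in \cM$, $\mu_j \ge 0$, and $\sum_j \mu_j = 1$. Linearity of $f$ gives $f(x^\star) = \sum_j \mu_j f(y_j)$; setting $c^{(j)} = \fminuszero(y_j)$ this yields $\sum_j \mu_j c^{(j)} = c$ and $\bar V_f(c) = f_0(x^\star) = \sum_j \mu_j f_0(y_j)$. Each $y_j$ is feasible for the problem defining $V_f(c^{(j)})$, so $f_0(y_j) \le V_f(c^{(j)})$, and since $\cM$ is compact and $f_0$ continuous, each $V_f(c^{(j)})$ is finite (lying between $f_0(y_j)$ and $\max_{x \in \cM} f_0(x)$). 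Now I invoke the hypothesis that $V_f$ is concave, in the form of Jensen's inequality applied to the convex combination $c = \sum_j \mu_j c^{(j)}$:
\[
\bar V_f(c) \;=\; \sum_j \mu_j f_0(y_j) \;\le\; \sum_j \mu_j V_f(c^{(j)}) \;\le\; V_f\Big(\sum_j \mu_j c^{(j)}\Big) \;=\; V_f(c),
\]
which is the required inequality.

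This argument is essentially routine. The only points that warrant care are checking that each $V_f(c^{(j)})$ is finite so that the extended-real-valued concavity of $V_f$ can legitimately be used as Jensen's inequality without an $\infty - \infty$ ambiguity, and bookkeeping around the $-\infty$ convention in the degenerate case; I do not anticipate any substantive obstacle.
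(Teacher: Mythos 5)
Your proof is correct and takes essentially the same approach as the paper. The paper's proof factors through Corollary~\ref{cor:convex_reform} (whose appendix proof shows that concavity of $V_f$ forces $\conv f(\cM)$ to lie in the hypograph of $V_f$) and then applies $\conv f(\cM) = f(\conv\cM)$ for linear $f$; your Carath\'eodory-plus-Jensen decomposition of the maximizer $x^\star$ is precisely the pointwise form of that same hypograph-containment argument, so the mathematical content coincides.
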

\begin{proof}
This follows from \cref{cor:convex_reform} by simply noting that if $f$ is linear, then $\conv f(\cM) = f(\conv(\cM))$.
\end{proof}

With~\Cref{prop:linear_proj} in hand, we may now prove~\Cref{thm:eigenvalues,thm:singvalues} in the next two subsection, with the arguments following a similar structure.

\subsection{Noncrossing Subspaces and Sectioned Linear Inverse Eigenvalue Problems}\label{subsec:liep}

We prove~\Cref{thm:eigenvalues} by using the following lemma: 
\begin{lemma}
    \label{lem:noncrossing}
    Let $A_0, \dots, A_k \in \R^{n\times n}_{sym}$ be linearly independent elements of a $\ell$-weakly noncrossing subspace. For any $\lambda \in \R^n$ satisfying $\lambda_i = \lambda_{\ell+1}$ for all $i \ge \ell+1$, the map $f:\cM_{\lambda} \rightarrow \R^{k+1}$ defined below is sectioned: 
    \[
        f(X) = (\langle A_0, X\rangle, \dots, \langle A_k, X\rangle).
    \]
\end{lemma}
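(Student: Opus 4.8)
The plan is to exhibit an explicit Lagrangian dual section $D : \tDomain \to \cM_\lambda$ built out of spectral projections. Since $\cM_\lambda$ depends only on the multiset of entries of $\lambda$, we may assume $\lambda_1 \ge \lambda_2 \ge \dots \ge \lambda_n$, so that the hypothesis reads $\lambda_1 \ge \dots \ge \lambda_\ell \ge \lambda_{\ell+1} = \lambda_{\ell+2} = \dots = \lambda_n$. For $t \in \tDomain$ set $A(t) = \sum_{i=0}^k t_i A_i$, and observe that $\langle t, f(X)\rangle = \sum_{i=0}^k t_i \langle A_i, X\rangle = \langle A(t), X\rangle$. Because the $A_i$ are linearly independent and $t \ne 0$, the matrix $A(t)$ is a nonzero element of the given $\ell$-weakly noncrossing subspace, hence its $\ell$ largest eigenvalues are simple; in particular
\[
    \lambda_1(A(t)) > \lambda_2(A(t)) > \dots > \lambda_\ell(A(t)) > \lambda_{\ell+1}(A(t))
    \qquad \text{for every } t \in \tDomain .
\]
Let $P_j(t)$ denote the rank-one orthogonal projection onto the eigenspace of $A(t)$ for the eigenvalue $\lambda_j(A(t))$, $j = 1, \dots, \ell$, and define
\[
    D(t) \;=\; \lambda_{\ell+1}\, I \;+\; \sum_{j=1}^{\ell} (\lambda_j - \lambda_{\ell+1})\, P_j(t) .
\]

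Next I would verify the two algebraic properties of a section. That $D(t) \in \cM_\lambda$ is immediate: $D(t)$ is symmetric with eigenvalues $\lambda_1, \dots, \lambda_\ell$ on the lines $\im P_j(t)$ and $\lambda_{\ell+1}$ with multiplicity $n - \ell$ on the orthogonal complement, so its spectrum is exactly the multiset $\lambda$ (the edge case $\lambda_\ell = \lambda_{\ell+1}$ kills the $j=\ell$ term and is harmless). For optimality, I would invoke von Neumann's trace inequality — equivalently, the easy direction of the Schur--Horn theorem — which gives $\langle A(t), X\rangle \le \sum_{i=1}^n \lambda_i(A(t))\,\lambda_i(X) = \sum_{i=1}^n \lambda_i(A(t))\,\lambda_i$ for all $X \in \cM_\lambda$. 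On the other hand, using $\tr\!\big(A(t) P_j(t)\big) = \lambda_j(A(t))$ and the equalities $\lambda_i = \lambda_{\ell+1}$ for $i > \ell$,
\[
    \langle A(t), D(t)\rangle
    = \lambda_{\ell+1}\tr A(t) + \sum_{j=1}^{\ell} (\lambda_j - \lambda_{\ell+1})\lambda_j(A(t))
    = \sum_{i=1}^{n} \lambda_i(A(t))\,\lambda_i ,
\]
so $D(t)$ attains the bound and therefore $D(t) \in \argmax_{X \in \cM_\lambda} \langle t, f(X)\rangle$, i.e.\ $(t, D(t)) \in \cB$.

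The step I expect to be the crux is the continuity of $D$ on $\tDomain$. Eigenvalues of a symmetric matrix depend continuously on the matrix, and the strict separation $\lambda_{j-1}(A(t)) > \lambda_j(A(t)) > \lambda_{j+1}(A(t))$ supplied by $\ell$-weak noncrossing holds at \emph{every} $t \in \tDomain$; standard perturbation theory then yields continuity of each $P_j$ for $j \le \ell$ — for instance via the Riesz formula $P_j(t) = \tfrac{1}{2\pi i}\oint_{\Gamma}(zI - A(t))^{-1}\,dz$ with $\Gamma$ a small circle enclosing $\lambda_j(A(t))$ and no other eigenvalue, which is valid on a neighborhood of any fixed $t$ — and hence continuity of $D$. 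The reason the construction works is exactly the interplay between the two hypotheses: $\ell$-weak noncrossing removes degeneracy precisely among the top $\ell$ eigenvalues, where we need eigenvector data to be well defined and continuous, while the flat tail $\lambda_{\ell+1} = \dots = \lambda_n$ lets us fold all of $A(t)$'s remaining (possibly crossing) eigenvalues into the single term $\lambda_{\ell+1}\big(I - \sum_{j \le \ell} P_j(t)\big)$, so that $D$ never depends on a crossing eigenvector. With continuity established, $D$ is a Lagrangian dual section, so $f$ is sectioned, which is the claim. (Combined with \Cref{thm:hidden_cvx} and \Cref{prop:linear_proj} this then gives \Cref{thm:eigenvalues}.)
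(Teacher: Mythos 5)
Your proof is correct, but it takes a genuinely different route from the paper's. The paper proves that $\ell_t(X) = \langle \sum_i t_i A_i, X\rangle$ has a \emph{unique} maximizer on $\cM_\lambda$ for each $t \in \tDomain$: it orthogonally diagonalizes $A(t) := \sum_i t_i A_i$, invokes Schur--Horn to reduce to maximizing $\langle \mu, \cdot\rangle$ over the permutohedron of $\lambda$, and concludes the maximizer must be $\Diag(\lambda)$ in the diagonalizing basis. It then cites \cref{lem:unique_max}, whose compactness argument converts unique maximization into continuity of the selection for free --- no perturbation theory of eigenprojections is needed. Your proof instead constructs the section $D(t) = \lambda_{\ell+1} I + \sum_{j \le \ell}(\lambda_j - \lambda_{\ell+1}) P_j(t)$ explicitly (which, unwound, is exactly the same maximizer the paper produces), certifies optimality via von Neumann's trace inequality rather than Schur--Horn, and proves continuity directly from the Riesz projector formula applied to the top $\ell$ spectral projections, which are well-separated precisely because of the $\ell$-weak noncrossing hypothesis. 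The paper's route is lighter on analytic prerequisites and, as a bonus, establishes unique maximization --- a strictly stronger conclusion that the paper reuses later (e.g.\ in \cref{lem:concave_composition}). Your route is more constructive: it exhibits a closed-form section and makes the mechanism (flat tail of $\lambda$ absorbing the possibly crossing low eigenvalues, noncrossing top protecting the eigenprojections used) completely explicit. Both proofs are sound; your verification of membership in $\cM_\lambda$, including the edge cases with repeated entries among $\lambda_1, \dots, \lambda_\ell$, and your optimality computation $\langle A(t), D(t)\rangle = \sum_i \lambda_i(A(t))\lambda_i$ check out.
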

\begin{proof}[Proof of \Cref{lem:noncrossing}]
\newcommand{\At}{A(t)}
Fix any $\lambda \in \R^n$ with $\lambda_{k+1} = \lambda_{k+2} = \dots = \lambda_n$, and an arbitrary $t \in \tDomain $.
By~\Cref{lem:unique_max}, it suffices to show that the function 
\[
    \ell_t(X) 
    = \left\langle\sum_{i=0}^{d} t_iA_i, X\right\rangle
\]
has a unique maximizer in $\cM_{\lambda}$.
Since $\cM_{\lambda}$ is invariant under orthogonal change of basis, we may assume that the matrix $\sum_{i=0}^{d} t_iA_i$ is diagonal, i.e. $\sum_{i=0}^{d} t_iA_i = \Diag(\mu)$ for some $\mu \in \R^n$. Having assumed that $\{A_0, \dots, A_k\}$ span a $\ell$-weakly noncrossing subspace, we may further assume that 
\[
\mu_{1} > \dots > \mu_{\ell} \ge \mu_{\ell+1} \ge \dots \ge \mu_n.
\]
We have
\[
    \ell_t(X) 
    = \langle\Diag(\mu), X\rangle
    = \langle\mu,  \diag(X)\rangle,
\]
where $\diag(X) \in \R^n$ denotes the vector of diagonal entries of the matrix $X$.

By the Schur-Horn theorem, the extreme points of $\{ \diag(X) : X \in \cM_{\lambda}\}$ are vectors of the form $\tau \lambda$, where $\tau $ is some permutation of the indices $1,\ldots , n$ and $\tau \lambda = (\lambda_{\tau^{-1}(i)})_{i=1}^n$. It follows that $\argmax_y \{\langle\mu, y\rangle  : y = \diag(X), X \in \cM_{\lambda}\} = \lambda$, where 
\[
    \lambda_1  \ge \dots \ge \lambda_{\ell} = \lambda_{\ell+1} = \dots = \lambda_n.
\]

In particular, we have that any maximizer of $\ell_t$ over $\cM_{\lambda}$ must satisfy $\diag(X) = \lambda$, which clearly implies that $X = \Diag(\lambda)$ is the unique maximizer of $\ell_t$.
\end{proof}

Combining~\Cref{prop:linear_proj} and~\Cref{lem:noncrossing}, we deduce~\Cref{thm:eigenvalues} below.

\begin{proof}[Proof of \Cref{thm:eigenvalues}]
    Let $A_0, \dots, A_k \in \R^{n \times n}_{sym}$ span a $\ell$-weakly noncrossing subspace, and let $\lambda \in \R^n$ satisfy $\lambda_{\ell+1} = \lambda_i$ for $i \ge \ell+1$.
    We may reduce to the case where $A_0, \dots, A_{k}$ are linearly independent, by linear change of coordinates for both the original and relaxed problems.
    \Cref{lem:noncrossing} then implies that the map $f : \cM_{\lambda} \rightarrow \R^{k+1}$ defined by $f(X) = (\tr(A_0X), \dots, \tr(A_{k}X))$ is sectioned.
    We may therefore conclude, by \Cref{prop:linear_proj}, that 
    \[
    V_f(c) = \max \{\langle A_0, X\rangle : \langle A_1, X\rangle = c_1, \dots, \langle A_{k}, X\rangle = c_{k}, X \in \conv(\cM_{\lambda})\}.
    \]
\end{proof}

\subsection{Singularly Noncrossing Subspaces and Sectioned Linear Inverse Singular Value Problems}
\label{subsec:lisv}
Here is a singular-value analog of~\Cref{lem:noncrossing}:

\begin{lemma}
    \label{lem:sing}
    Let $n \ge m$, and $A_0, \dots, A_k \in \R^{n\times m}_{sym}$ be linearly independent elements of a $\ell$-weakly singularly noncrossing subspace.
    If either of the following two conditions hold, then the map  $f:\cM_{\sigma}^{(n,m)} \rightarrow \R^{k+1}$ below is sectioned:
    \[
        f(X) = (\tr(A_0 X), \dots, \tr(A_kX)).
    \]
    \begin{itemize}
    \item If $\sigma \in \R_{\ge 0}^m$ is a vector with entries in descending order so that $\sigma_i = 0$ for $i \ge \ell$.

    \item If $\text{span} \{A_0, \dots, A_k\}$, in addition to being $\ell$-weakly singularly noncrossing, contains no nonzero matrix of rank less than $m$, and $\sigma \in \R_{\ge 0}^m$ is a vector with entries in descending order so that $\sigma_i = \sigma_{\ell+1}$ for $i > \ell$.
    \end{itemize}
\end{lemma}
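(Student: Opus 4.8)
The plan is to mimic the structure of the proof of Lemma~4.1 (\Cref{lem:noncrossing}), reducing—via \Cref{lem:unique_max}—to showing that for every $t \in \tDomain$ the linear functional $\ell_t(X) = \langle \sum_{i=0}^k t_i A_i, X\rangle$ has a \emph{unique} maximizer on $\cM^{(n,m)}_{\sigma}$. Write $A(t) = \sum_{i=0}^k t_i A_i$; by hypothesis this is a nonzero matrix whose $\ell$ largest singular values $\sigma_1(A(t)) > \dots > \sigma_\ell(A(t))$ are nondegenerate and strictly exceed $\sigma_{\ell+1}(A(t)) \ge \dots \ge \sigma_m(A(t))$ (strictness of the $\ell$-th gap follows from $\ell$-weak singular noncrossing applied to a matrix whose $\ell$ largest singular values are nondegenerate; I should double-check that "nondegenerate" here forces $\sigma_\ell(A(t)) > \sigma_{\ell+1}(A(t))$, which is exactly the statement that the eigenspace of $A(t)^\intercal A(t)$ for $\sigma_\ell(A(t))^2$ is one-dimensional). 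The key external tool is the variational characterization: $\max\{\langle C, X\rangle : \sigma_i(X) = \sigma_i \ \forall i\}$, where $\conv \cM^{(n,m)}_\sigma$ is the Fan orbitope, equals $\sum_{i=1}^m \sigma_i(C)\sigma_i$ (von Neumann's trace inequality / Ky Fan), with equality iff $C$ and $X$ share an ordered singular value decomposition—i.e. $X = \sum_i \sigma_i u_i v_i^\intercal$ where $C = \sum_i \sigma_i(C) u_i v_i^\intercal$.

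The crux is then to show this forces a \emph{unique} $X$ under each of the two hypotheses on $\sigma$. First I would establish: any maximizer $X^\star$ has a singular value decomposition $X^\star = \sum_i \sigma_i u_i v_i^\intercal$ for \emph{some} choice of singular vectors $(u_i, v_i)$ of $A(t)$. Because $\sigma_1(A(t)), \dots, \sigma_\ell(A(t))$ are simple, the pairs $(u_1, v_1), \dots, (u_\ell, v_\ell)$ are determined up to a common sign on each, and these signs cancel in the rank-one terms $u_i v_i^\intercal$; so the contribution $\sum_{i=1}^\ell \sigma_i u_i v_i^\intercal$ to $X^\star$ is uniquely determined. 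The remaining ambiguity lives in the "tail" $\sum_{i=\ell+1}^m \sigma_i u_i v_i^\intercal$, where the $u_i, v_i$ range over orthonormal bases of the (possibly higher-dimensional) trailing singular subspaces of $A(t)$. In the first case, $\sigma_i = 0$ for $i > \ell$ (the statement says $i \ge \ell$, but I'll take $i > \ell$ consistent with \Cref{thm:singvalues}), so the tail vanishes identically and $X^\star$ is unique. In the second case, $\sigma_{\ell+1} = \dots = \sigma_m$, so the tail equals $\sigma_{\ell+1} \sum_{i=\ell+1}^m u_i v_i^\intercal = \sigma_{\ell+1} \cdot (U' V'^\intercal)$ where $U', V'$ have as columns any orthonormal bases of the trailing left/right singular subspaces of $A(t)$; I must show $U' V'^\intercal$ is \emph{independent} of the chosen bases. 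This holds precisely when the two subspaces have the same dimension and we use the canonical pairing from the SVD of $A(t)$—but here the extra hypothesis that $\Span\{A_i\}$ contains no nonzero matrix of rank $< m$ enters: it guarantees $A(t)$ has full rank $m$, so $\sigma_m(A(t)) > 0$, hence \emph{all} singular values of $A(t)$ are positive and the right singular subspace for the repeated trailing value has a well-defined dimension $m - \ell$, with the left/right bases paired bijectively by $A(t)$ itself. Concretely, if $W$ denotes the trailing right singular subspace (dimension $m-\ell$), then $A(t)$ maps $W$ isomorphically onto the trailing left singular subspace scaled by $\sigma_{\ell+1}(A(t))$, so $U'V'^\intercal$ is forced to be $\sigma_{\ell+1}(A(t))^{-1} A(t)|_W$ composed with the inclusion—manifestly basis-independent.

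The main obstacle I anticipate is the second case: carefully handling the trailing singular subspaces of $A(t)$ and proving that $U' V'^\intercal$ is canonical. The subtlety is that an \emph{arbitrary} maximizer $X^\star$ need not a priori use the SVD of $A(t)$ "compatibly" on the degenerate block—one must argue from the equality condition in von Neumann's inequality that $X^\star$ restricted to the trailing block is of the form (fixed scalar)$\times$(partial isometry intertwining the trailing singular subspaces), and then that the full-rank hypothesis pins this partial isometry down uniquely. I would isolate this as a short linear-algebra sublemma: \emph{if $C \in \R^{n\times m}$ has full rank $m$ with $\sigma_\ell(C) > \sigma_{\ell+1}(C) = \dots = \sigma_m(C) > 0$, then $\{X : \sigma_i(X) = \sigma_i, \langle C, X\rangle = \sum_i \sigma_i(C)\sigma_i\}$ is a single point whenever $\sigma_{\ell+1} = \dots = \sigma_m$.} Once this sublemma is in place, both bullet points of \Cref{lem:sing} follow, and then \Cref{thm:singvalues} is obtained exactly as \Cref{thm:eigenvalues} was—by combining with \Cref{prop:linear_proj} after reducing to linearly independent $A_0, \dots, A_k$.
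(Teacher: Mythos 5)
Your plan is correct and reaches the same conclusion via the same top-level reduction (invoke \Cref{lem:unique_max} and prove unique maximization of $\ell_t$ over $\cM_\sigma^{(n,m)}$), but the technical core differs from the paper's.  The paper first diagonalizes $A(t)=\sum_i t_iA_i$ using the bi-orthogonal $O(n)\times O(m)$ invariance of $\cM_\sigma^{(n,m)}$, so that $\ell_t(X)=\langle\mu,\diag(X)\rangle$, and then invokes the signed-permutohedron description of $\{\diag(X):X\in \cM_\sigma^{(n,m)}\}$ (Fan/Thompson) to conclude that every maximizer satisfies $\diag(X)=\sigma$; the final uniqueness step is the one-line Frobenius rigidity argument that $\diag(X)=\sigma$ together with $\sum_i\sigma_i^2=\|X\|_F^2\ge\sum_iX_{ii}^2$ forces all off-diagonal entries to vanish, hence $X=\Diag(\sigma)$.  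You instead work directly with von Neumann's trace-inequality equality condition and reason about shared SVDs of $A(t)$ and $X$: the top $\ell$ rank-one terms are pinned down by nondegeneracy, and the trailing block is identified with the partial-isometry factor of $A(t)$ restricted to its trailing singular subspace, which is canonical once full rank is assumed.  Both routes are valid; the paper's is slightly more elementary in the final uniqueness step and avoids having to carefully state the degenerate-block form of the von Neumann equality condition, whereas your version makes the role of the full-rank hypothesis (case 2) more explicit by exhibiting the trailing block as the polar-isometry part of $A(t)$.  You also correctly flag the $i\ge\ell$ vs.\ $i>\ell$ inconsistency between the lemma statement and \Cref{thm:singvalues}; the $i>\ell$ reading is the intended one and is what the paper's proof actually uses.
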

\begin{proof}[Proof of \Cref{lem:sing}]
This proof is almost identical to that of \Cref{lem:noncrossing}, where the relevant facts about eigenvalues are replaced by singular values.
We now fix $\sigma \in \R^m$ satisfying the assumptions of the theorem. We want to apply~\Cref{lem:unique_max}, which requires showing that the function 
\[
    \ell_t(X) = \tr\left(\sum_{i=0}^{d} t_iA_i X\right)
\]
has a unique maximizer over the set $\cM_{\sigma}^{(n,m)}$ for any $t \in \tDomain$.
Since $M_{\sigma}^{(n,m)}$ is invariant under the bi-orthogonal action 
\[
(U,V) \cdot X = U X V^T 
\quad (U,V)\in O(n) \times O(m),
\, \, X\in \cM_{\mu}^{(n,m)},
\]
we may assume $\sum_{i=0}^{d} t_iA_i = \Diag(\mu)$ for some $\mu \in \R^m$.
Because $\{A_0, \dots, A_k\}$ span a $\ell$-weakly singularly noncrossing subspace, we may assume that
\[
    \mu_1 > \dots > \mu_{\ell} > \mu_{\ell+1} \ge \dots \mu_m \ge 0.
\]

As in the previous proof, we have that 
\[
    \ell_t(X) = \langle \Diag(\mu), X\rangle = \langle \mu, \diag(X)\rangle.
\] We may now apply Fan's theorem \cite{fan1951maximum}, which implies that the extreme points of 
\[
    \{\diag(X) : X \in \cM_{\sigma}^{(n,m)}\}
\]
are precisely those vectors lying in the set
\[
    \{S \tau \sigma : S \in \Diag(\{-1, 1\}^m), \tau \text{ is a permutation of indices } 1, \ldots , m\},
\]
where $\tau \sigma = (\sigma_{\tau^{-1}(i)})_{i=1}^m$.

If either of the conditions in the statement of the theorem are satisfied, then
\[
    \argmax_y \{\langle \mu, y\rangle : y = \diag(X), X \in \cM_{\sigma}^{(n,m)}\} = \sigma.
\]

Hence, we have that for any maximizer $X$ of $\ell_t$, $\diag(X) = \sigma$, which clearly implies that $X = \Diag(\sigma)$ is the unique maximizer of $\ell_t$.
\end{proof}

\begin{remark}
    \label{rmk:steifel}
    In the special case of the Stiefel manifold, where $\sigma = (1,\dots, 1)$, 
    our assumption  simplifies to requiring that the matrices $A_0, \dots, A_k$ span a subspace of $\R^{n \times m}$ which contains no nonzero matrix that has rank less than $m$.
    Note that the set of matrices in $\R^{n\times m}$ which are singular is of codimension $n-m+1$.
    Therefore, if $k \le n - m$, then for a dense set of tuples $(A_0, \dots, A_k)$, the map $f$ is sectioned. By a limiting argument, this implies that $\Voptf(c)$ is concave for all choices of $A_0, \dots, A_k \in \R^{n\times m}$  provided
    $k \le n-m$, recovering a result of \cite{song2024linear}.

    There may exist $k+1$ dimensional subspaces of $\R^{n\times m}$ containing no nonzero matrix of rank less than $m$, even when $k$ is larger than $n-m$; the pairs $(n,k) \in \N^2$ for which there exists such a $k$-dimensional subspace of $\R^{n\times n}$ containing no nonzero singular matrix is closely related to the Radon-Hurwitz numbers~\cite{adams1965matrices}.
\end{remark}

\begin{proof}[Proof of \Cref{thm:singvalues}] 
    Let $A_0, \dots, A_k \in \R^{n \times m}$ span a $\ell$-weakly singularly noncrossing subspace, and let $\sigma \in \R^m$ 
    satisfy $\sigma_{i} = 0$ for $i \ge \ell+1$.

    As in the proof of~\Cref{thm:eigenvalues}, we may assume $A_0,\ldots, A_k$ are linearly independent as elements of the vector space $\R^{n\times m}$.
    \Cref{lem:sing} then implies that the map $f : \cM_{\lambda}^{(n,m)} \rightarrow \R^{k+1}$ defined by $f(X) = (\tr(A_0X), \dots, \tr(A_{k}X))$ is sectioned.
    We may therefore conclude by \Cref{prop:linear_proj} that 
    \[
    V_f(c) = \max \{\langle A_0, X\rangle : \langle A_1, X\rangle = c_1, \dots, \langle A_{k}, X\rangle = c_{k}, X \in \conv(\cM_{\sigma}^{(n,m)})\}.
    \]
\end{proof}

\subsection{A Lie-Theoretic Generalization}\label{subsection:lie}
We discuss how the results pertaining to linear inverse spectral problems earlier in this section can be obtained in a more general setting involving the orbits of Lie groups. 
To avoid introducing too many Lie-theoretic notions, we only give a high level overview of these ideas, and simply indicate how the proofs of \cref{lem:noncrossing} and \cref{lem:sing} generalize to the setting of Lie groups, and reference the works \cite{kostant1973convexity, dadok1985polar, kobert2022spectrahedral} 
for further reading on Kostant's convexity theorem.

First let us describe how the manifolds $\cM_{\lambda}$ and $\cM_{\sigma}^{(n,m)}$ can be thought of as the orbits of group actions.
In the case of the 
linear inverse eigenvalue problem, the manifold $\cM_{\lambda}$ can be thought of as the orbit of the matrix $\Diag(\lambda)$ under the action of the group $\SO(n)$, acting by letting $U \cdot X = UXU^{\intercal}$.
For the linear inverse singular value problem with $m < n$, the manifold $\cM^{(n,m)}_{\sigma}$ is the orbit of the matrix $\Diag(\sigma)$ under the action of the group $\SO(n) \times \SO(m)$ acting by letting $(U, V) \cdot X = UXV^{\intercal}$.

Thus, we see that in both of these cases, there is an underlying group $G$ acting orthogonally on some representation $V$ for which the domain $\cM$ is the orbit of an element under the group action. Indeed, in both of these examples, the underlying group representation is a `polar representation', which were defined in \cite{dadok1985polar}. While we will not define a polar representation precisely, we will note that a polar representation involves two pieces of information in addition to the group action: a linear subspace of $V$ called the Cartan subspace and a group $W$ acting on the Cartan subspace called the Weyl group. In the case of $\SO(n)$ acting on $\R^{n\times n}_{sym}$, the Cartan subspace is the space of diagonal matrices, and the group $W$ is $\mathfrak{S}_n$, the permutation group on $n$ letters. In the case of $\SO(n) \times \SO(m)$ acting on $\R^{n\times m}$, the Cartan subspace is the set of diagonal matrices in $\R^{n \times m}$, and the Weyl group is a semidirect product $\Z_2^{m} \rtimes \mathfrak{S}_n$ known as the hyperoctahedral group.

For polar representations, there is a common generalization of the Schur-Horn theorem and Fan's theorem known as Kostant's convexity theorem.
\begin{theorem}[Kostant Convexity \cite{kostant1973convexity}]
If $(V,\cdot)$ is a polar representation of a compact connected Lie group $G$, $\Sigma$ is the Cartan subspace, and $W$ is the Weyl group, then for any $v\in \Sigma$,
\[
    \pi_{\Sigma}(G \cdot v) = \conv(W\cdot v),
\]
where $\pi_{\Sigma}$ denotes the orthogonal projection onto the subspace $\Sigma$.
\end{theorem}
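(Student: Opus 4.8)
The statement is a classical theorem, so the plan is to recall the structure of its standard proof and to isolate the one step that does real work. Fix the $G$-invariant inner product on $V$, use it to identify $\Sigma^{*}\cong\Sigma$, and recall the two structural facts that characterize the situation: $\Sigma$ is a \emph{section}, i.e.\ $G\cdot\Sigma = V$ and $\Sigma$ is orthogonal to every orbit it meets; and the Weyl group is $W = N_{G}(\Sigma)/Z_{G}(\Sigma)$, acting on $\Sigma$, with $G\cdot v\cap\Sigma = W\cdot v$ for all $v\in\Sigma$ (together with the slice theorem: the $G_{p}$-action on the normal space $\nu_{p}(G\cdot p)$ is again polar with a translate of $\Sigma$ as a section). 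The containment $W\cdot v\subseteq\pi_{\Sigma}(G\cdot v)$ is then immediate: each $w\in W$ lifts to some $g\in N_{G}(\Sigma)$, so $w\cdot v = g\cdot v\in G\cdot v$ lies in $\Sigma$ and is fixed by $\pi_{\Sigma}$.

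Next I would compare support functions on $\Sigma$. For $\xi\in\Sigma$, orthogonality of the projection gives $\langle\xi,\pi_{\Sigma}y\rangle = \langle\xi,y\rangle$, so the support function of $\pi_{\Sigma}(G\cdot v)$ at $\xi$ equals $\max_{y\in G\cdot v}\langle\xi,y\rangle$, and by continuity it suffices to treat regular $\xi$. A maximizer $y_{0}$ is a critical point of $y\mapsto\langle\xi,y\rangle$ on the orbit, hence $\xi\in\nu_{y_{0}}(G\cdot y_{0})$; using the section structure and the slice theorem one can translate the pair $(y_{0},\xi)$ by some $g\in G$ so that both $g\cdot y_{0}$ and $g\cdot\xi$ lie in $\Sigma$, whence $g\cdot y_{0}\in G\cdot v\cap\Sigma = W\cdot v$ and, by regularity, $g\cdot\xi\in W\cdot\xi$; substituting back shows $\langle\xi,y_{0}\rangle = \langle\xi,u\rangle$ for some $u\in W\cdot v$. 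Therefore $\max_{y\in G\cdot v}\langle\xi,y\rangle = \max_{u\in W\cdot v}\langle\xi,u\rangle$ for every $\xi\in\Sigma$, so $\pi_{\Sigma}(G\cdot v)$ and $W\cdot v$ have the same (closed, compact) convex hull; combined with $W\cdot v\subseteq\pi_{\Sigma}(G\cdot v)$ this already yields $\pi_{\Sigma}(G\cdot v)\subseteq\conv(W\cdot v)$.

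What remains is the reverse inclusion $\conv(W\cdot v)\subseteq\pi_{\Sigma}(G\cdot v)$, i.e.\ the assertion that the orbit projection is \emph{convex}, and this is the only genuinely hard step --- everything above is bookkeeping around the support function and the identification of extreme points with $W\cdot v$. I would prove it by the Morse-theoretic induction on $\dim\Sigma$ that is standard for projections of polar orbits (in the spirit of the Atiyah and Guillemin--Sternberg arguments, though here the orbit $G\cdot v\subseteq V$ carries no symplectic structure in general): for generic $\xi$ the height function $y\mapsto\langle\xi,y\rangle$ on the compact orbit $G\cdot v$ is Morse--Bott with critical set mapping into $W\cdot v$, one shows its superlevel sets are connected, from which connectedness of the fibers of $\pi_{\Sigma}|_{G\cdot v}$ follows, and then convexity of the image follows by induction over the faces of a Weyl chamber; alternatively one simply cites the treatments of polar representations and isoparametric orbits in \cite{kostant1973convexity, dadok1985polar}. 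The main obstacle is precisely this convexity/connectedness step; the support-function reduction and the structure theory of sections are comparatively routine once they are assembled.
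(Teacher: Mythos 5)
The paper does not prove this theorem --- it is stated as a citation to Kostant~\cite{kostant1973convexity} (and, for the general polar-representation form, implicitly to Dadok~\cite{dadok1985polar}, who extended Kostant's result beyond the adjoint/symmetric-space setting). So there is no in-paper proof to compare against; what you are effectively being asked to do is reconstruct the cited result. Two observations follow.

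First, the paper remarks, immediately after the theorem statement, that ``our proofs only use the `easy' part of the Kostant convexity theorem that $\pi_\Sigma(G\cdot v)\subseteq\conv(W\cdot v)$, which can be proven using elementary calculus.'' That easy containment is exactly the part you carry out in full: the support-function comparison, reduction to critical points of $y\mapsto\langle\xi,y\rangle$ on the orbit, the observation that $\xi$ normal to the orbit at $y_0$ forces $\xi$ to lie in a section through $y_0$, and the identification $G\cdot\xi\cap\Sigma = W\cdot\xi$ (here you invoke regularity of $\xi$, which is harmless but unnecessary --- that identity holds for every $\xi\in\Sigma$). Your write-up therefore supplies precisely the detail the paper gestures at and actually relies on. Minor caveat: you should make explicit that, at a non-regular $y_0$, the passage from ``$\xi\in\nu_{y_0}(G\cdot y_0)$'' to ``a section through $y_0$ contains $\xi$'' uses the slice-theorem fact that the slice representation is again polar and its sections are (translates of) sections of $V$; you wave at this but it's the one place a reader could get stuck.

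Second, the reverse inclusion $\conv(W\cdot v)\subseteq\pi_\Sigma(G\cdot v)$ --- the genuinely convex-image statement --- you correctly flag as the hard step, but your sketch in the Atiyah/Guillemin--Sternberg mold does not line up with how Kostant or Dadok actually argue (as you yourself note, the orbit carries no symplectic structure in general, so the $H^*$/Morse--Bott machinery needs independent justification here, and you don't supply it). Ending that step with ``or simply cite the references'' is acceptable for a cited theorem, but it means your proposal is a blueprint rather than a proof. Since the paper itself only uses the direction you proved, this is a reasonable allocation of effort, but the attribution should be sharpened: for the polar-representation generality stated here, Dadok~\cite{dadok1985polar} is the correct reference, with Kostant~\cite{kostant1973convexity} covering the original adjoint-orbit case.
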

If we consider the proofs of \cref{lem:noncrossing} and \cref{lem:sing}, we see that we can replace the usage of the Schur-Horn theorem and Fan's theorem with this result immediately.
It may be noted that our proofs only use use the `easy' part of the Kostant convexity theorem that $\pi_{\Sigma}(G \cdot v) \subseteq \conv(W\cdot v)$, which can be proven using elementary calculus.

In order to fully generalize  \cref{lem:noncrossing,lem:sing} to the context of polar representations, we would need to define the relevant version of a noncrossing subspace.
To derive this generalization, we will need to recall that if $v \in \Sigma$, then the stabilizer of $v$ inside $W$ is the subgroup $Stab(v) \subseteq W$ of group elements fixing $v$.
We will also require the notion of a closed Weyl chamber. One way to define a closed Weyl chamber in the context of convex geometry is that it is the normal cone to $\conv(W \cdot v)$ at $v$ for any $v \in \Sigma$ which has trivial stabilizer in $W$. For a fixed closed Weyl chamber $C \subseteq \Sigma$, for any $A \in v$ there is a unique $\sigma(A) \in C$ so that there exists some $g \in G$ with $A = g \cdot \sigma(A)$. In our running example of the action of $\SO(n)$ on $\R^{n\times n}_{sym}$, a Weyl chamber inside the set of diagonal matrices is simply the set of diagonal matrices with nonincreasing entries along the diagonal.

With these notions in mind, fix a polar representation $V$ with a Cartan subspace $\Sigma$, a Weyl chamber $C \subseteq \Sigma$, and Weyl group $W$. If $H$ is a subgroup of $W$, then we will say then that a linear subspace $S \subseteq V$ is \emph{$H$-noncrossing} if  $Stab(\sigma(A)) \subseteq H$ for every nonzero $A \in S$. In our running example of the action of $\SO(n)$ on $\R^{n\times n}_{sym}$, if $H = \mathfrak{S}_{n-\ell}$ is the subgroup of $\mathfrak{S}_n$ that fixes the first $\ell$ elements of $[n]$, then $S \subseteq \R^{n\times n}_{sym}$ is $H$-noncrossing if and only if $S$ is $\ell$-weakly noncrossing. Note that in this example, $H = Stab((v_1, \dots, v_{\ell}, 0, \dots, 0))$ whenever $v_1, \dots, v_{\ell}$ are distinct.

\Cref{lem:noncrossing,lem:sing} on sectioning can be generalized to the following:

\begin{lemma}
\label{lem:noncrossing_gen}
Fix a polar representation $V$ with a Cartan subspace $\Sigma$, a Weyl chamber $C \subseteq \Sigma$, and Weyl group $W$. Also fix some $v \in C$ and let $\cM = G \cdot v$.
If $A_0, \dots, A_k \in V$ are linearly independent elements of a $Stab(v)$-noncrossing subspace, then the linear map $f(x) = (\langle A_0, x\rangle, \dots, \langle A_k, x\rangle)$ is sectioned on $\cM$.
\end{lemma}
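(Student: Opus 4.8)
The plan is to mirror the structure of the proofs of \Cref{lem:noncrossing,lem:sing}, replacing the Schur--Horn theorem and Fan's theorem with the (easy inclusion of the) Kostant convexity theorem, and replacing the explicit spectral decompositions with the orbit-map language of polar representations. By \Cref{lem:unique_max}, it suffices to show that for each $t \in \tDomain$ the linear functional $\ell_t(x) = \langle \sum_{i=0}^k t_i A_i, x\rangle$ has a unique maximizer on $\cM = G\cdot v$. Write $A(t) = \sum_{i=0}^k t_i A_i$; since $A(t)$ is a nonzero element of the $Stab(v)$-noncrossing subspace $\Span\{A_0,\dots,A_k\}$, there is $g_0 \in G$ with $g_0^{-1}\cdot A(t) = \sigma(A(t)) =: \mu \in C$, and $Stab(\mu) \subseteq Stab(v)$. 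Because the action is orthogonal and $\cM$ is $G$-invariant, we may replace $A(t)$ by $\mu$ and it suffices to show $x \mapsto \langle \mu, x\rangle$ has a unique maximizer on $G\cdot v$.

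\textbf{Main steps.} First, I would show that $\max_{x \in G\cdot v}\langle \mu, x\rangle = \max_{x \in G\cdot v}\langle \mu, \pi_\Sigma(x)\rangle$, since $\mu \in \Sigma$ and $\pi_\Sigma$ is the orthogonal projection. Second, by the easy inclusion of Kostant convexity, $\pi_\Sigma(G\cdot v) \subseteq \conv(W\cdot v)$, and since $v \in G\cdot v$ we in fact have $\pi_\Sigma(G\cdot v) = \conv(W\cdot v)$ (the reverse inclusion holds because $W\cdot v \subseteq \pi_\Sigma(G\cdot v)$ — each $w\cdot v$ lies in the same $G$-orbit and in $\Sigma$ — and the left side is convex by the full theorem, though one only needs the easy direction combined with the observation that maximizing a linear functional over a set and over its convex hull give the same value). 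Third, the maximum of $\langle \mu,\cdot\rangle$ over $\conv(W\cdot v)$ is attained exactly on the face $F = \argmax_{y \in \conv(W\cdot v)}\langle \mu, y\rangle$, and because $\mu \in C$ lies in the Weyl chamber, this face is the convex hull of $Stab(\mu)\cdot v$; this is the point where the Weyl-chamber/normal-cone description of $C$ enters. Fourth, I would show that every maximizer $x^\star$ of $\ell_t$ on $\cM$ satisfies $\pi_\Sigma(x^\star) \in F$, hence (after translating back by $g_0$) $\pi_\Sigma(g_0^{-1}\cdot x^\star) \in \conv(Stab(\mu)\cdot v)$. Fifth — and this is the crux — I would argue that $\pi_\Sigma(g_0^{-1}\cdot x^\star) \in \conv(Stab(\mu)\cdot v)$ together with $g_0^{-1}\cdot x^\star \in G\cdot v$ forces $g_0^{-1}\cdot x^\star$ to lie in a single small orbit, and that the noncrossing hypothesis $Stab(\mu)\subseteq Stab(v)$ collapses $Stab(\mu)\cdot v$ to the single point $v$, so $F = \{v\}$ and the maximizer is unique.

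\textbf{The main obstacle.} The delicate point is the fifth step: deducing that the fiber of $\pi_\Sigma$ over the face $F$, intersected with $G\cdot v$, is a single point once $F$ is a point. Concretely one needs: (a) if $y \in \Sigma$ and $y \in G\cdot v$ then $y \in W\cdot v$ (a standard fact for polar representations: the orbit meets the Cartan subspace in a single $W$-orbit); and (b) that $\langle \mu, \pi_\Sigma(x)\rangle = \langle \mu, v\rangle$ for $x \in G\cdot v$ implies $\pi_\Sigma(x) = v$ when $Stab(\mu)\cdot v = \{v\}$, and then that $\pi_\Sigma(x) \in W\cdot v$ (by (a), applied after noting a maximizer's projection actually lands in the orbit) pins $x$ down. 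In the matrix case these were handled by the explicit statement that the diagonal of a maximizer equals the target spectrum, forcing $X = \Diag(\lambda)$; in the Lie-theoretic setting the analogue is that the gradient condition for a maximizer of a linear functional on an orbit forces $x$ into the Cartan subspace when the functional is regular relative to $x$'s isotropy, a fact one can extract from the first-order optimality conditions for the orbit map combined with the noncrossing hypothesis. I would isolate this as a short lemma on polar representations, citing \cite{dadok1985polar} for the structural facts about orbits meeting $\Sigma$, and otherwise the argument is a direct transcription of the two earlier proofs.
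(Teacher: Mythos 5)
Your plan follows exactly the same route as the paper, which in fact proves this lemma by a one-line remark that one should replace Schur--Horn/Fan by Kostant in the proofs of \Cref{lem:noncrossing,lem:sing}; your reduction via \Cref{lem:unique_max}, transport of $A(t)$ into the Weyl chamber, the easy inclusion of Kostant, and the identification of the argmax face with $Stab(\mu)\cdot v = \{v\}$ all match the implied argument. The one point you flag as "the crux" (step five) closes more simply than your sketch suggests and does not need a gradient/regularity argument: since $G$ acts orthogonally, every $x \in G\cdot v$ has $\|x\| = \|v\|$, so $\pi_\Sigma(x^\star) = v$ forces $\|\pi_\Sigma(x^\star)\| = \|x^\star\|$, hence $x^\star \in \Sigma$ and therefore $x^\star = v$. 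This is precisely the mechanism behind the "clearly implies $X = \Diag(\lambda)$" step in \Cref{lem:noncrossing}, where $\diag(X) = \lambda$ and $\|X\|_F^2 = \sum_i \lambda_i^2 = \|\diag(X)\|^2$ force the off-diagonal entries to vanish.
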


The proof is essentially identical to those given for \Cref{lem:noncrossing,lem:sing}, but we replace the Schur-Horn and Fan's theorem with Kostant's convexity theorem.

Finally, we note that \cref{thm:eigenvalues,thm:singvalues} also can be generalized by combining \cref{thm:hidden_cvx} with \cref{lem:noncrossing_gen}. Indeed, it is even known that in the context of polar representations that $\conv(G\cdot v)$ is in fact spectrahedral, and therefore, this convex reformulation can be expressed as a semidefinite program.

\begin{theorem}
Fix a polar representation $V$ with a Cartan subspace $\Sigma$, a Weyl chamber $C \subseteq \Sigma$, and Weyl group $W$. Also fix some $v \in C$ and let $\cM = G \cdot v$.
If $A_0, \dots, A_k \in V$ are linearly independent elements of a $Stab(v)$-noncrossing subspace of $V$, then for any $c \in \R^k$
\begin{align*}
    &\max \{\langle A_0, X\rangle : \langle A_1, X\rangle = c_1, \dots, \langle A_k, X\rangle = c_k, X \in \cM\}\\
    &=\max \{\langle A_0, X\rangle : \langle A_1, X\rangle = c_1, \dots, \langle A_k, X\rangle = c_k, X \in \conv(\cM)\}.
\end{align*}
\end{theorem}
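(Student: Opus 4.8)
The plan is to deduce the final theorem as a direct corollary of \Cref{lem:noncrossing_gen} together with \Cref{thm:hidden_cvx}, mirroring exactly the way \Cref{thm:eigenvalues,thm:singvalues} were derived from \Cref{lem:noncrossing,lem:sing} via \Cref{prop:linear_proj}. First I would invoke \Cref{lem:noncrossing_gen}: since $A_0,\dots,A_k$ are linearly independent elements of a $Stab(v)$-noncrossing subspace, the linear map $f(x) = (\langle A_0, x\rangle,\dots,\langle A_k,x\rangle)$ is sectioned on $\cM = G\cdot v$. Next I would observe that $\cM$ is compact: it is the continuous image of the compact group $G$ (compactness of $G$ is part of the hypothesis of a polar representation of a compact connected Lie group) under the orbit map $g\mapsto g\cdot v$. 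Hence the hypotheses of \Cref{thm:hidden_cvx} are met, and $V_f$ is concave; equivalently, by \Cref{cor:convex_reform},
\[
    V_f(c) = \max\{y_0 : y_1 = c_1,\dots,y_k = c_k,\ y \in \conv(f(\cM))\}.
\]

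The one small bridge to cross is that \Cref{cor:convex_reform} gives the reformulation over $\conv(f(\cM))$, whereas the theorem statement wants the reformulation over $f(\conv(\cM))$ — i.e.\ it wants to pull the convex hull back inside the linear map onto $\cM$. This is precisely the content of \Cref{prop:linear_proj}, whose proof is the one-line remark that for a linear map $f$ one has $\conv f(\cM) = f(\conv(\cM))$. Since $f(x) = (\langle A_0,x\rangle,\dots,\langle A_k,x\rangle)$ is linear and $\cM = G\cdot v$ is compact, \Cref{prop:linear_proj} applies verbatim and yields
\[
    V_f(c) = \max\{\langle A_0, X\rangle : \langle A_1, X\rangle = c_1,\dots,\langle A_k,X\rangle = c_k,\ X \in \conv(\cM)\},
\]
which is exactly the claimed equality between the optimization over $\cM$ and over $\conv(\cM)$.

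I do not expect a genuine obstacle here: the theorem is a packaging of \Cref{lem:noncrossing_gen}, \Cref{thm:hidden_cvx}, and \Cref{prop:linear_proj}, in the same pattern already executed twice in \Cref{subsec:liep,subsec:lisv}. The only point requiring a word of care — and thus the ``hardest'' step, though it is routine — is verifying compactness of the orbit $\cM = G\cdot v$ so that \Cref{thm:hidden_cvx} and \Cref{prop:linear_proj} are applicable; this follows immediately from compactness of $G$ and continuity of the orbit map. The parenthetical remark in the paper about $\conv(G\cdot v)$ being spectrahedral (hence the reformulation being an SDP) is a separate citation-backed fact about polar representations and is not needed for the stated equality itself.
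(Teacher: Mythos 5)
Your proposal is correct and follows exactly the route the paper intends: the paper states this theorem without a written-out proof, merely remarking that it follows ``by combining \cref{thm:hidden_cvx} with \cref{lem:noncrossing_gen},'' and your argument fills in precisely that packaging, including the observation (via \cref{prop:linear_proj}, i.e.\ linearity of $f$ gives $\conv f(\cM) = f(\conv \cM)$) needed to move the convex hull from $f(\cM)$ to $\cM$, plus the compactness check for $\cM = G\cdot v$. Nothing is missing and nothing departs from the paper's approach.
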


\section{Unbalanced Procrustes Problems}\label{sec:UPP}
This section will analyze the UPP from the perspective of Lagrangian dual bundles.
To restate the UPP, we are given two matrices $U \in \R^{n \times d}$ and $W \in \R^{m \times d}$ and we want to find the $X \in \St^{n, m}$ minimizing the objective $\|U^{\intercal}X-W^{\intercal}\|^2.$

We often make use of an alternative formulation of this problem in terms of two matrices $A \in \R^{n \times n}_{sym}$ and $B \in \R^{n \times m}$.
\begin{lemma}
\label{lem:UPP_equivalent}
Given $U\in \mathbb{R}^{n\times d}$ and $W\in \mathbb{R}^{m\times d}$, 
let $A = -UU^{\intercal}$ and $B = 2UW^{\intercal}$.
Then a matrix $X \in \St^{n,m}$ minimizes $\|U^{\intercal}X-W^{\intercal}\|^2$ over $\St^{n,m}$ if and only if $X$ maximizes $\langle A, XX^{\intercal}\rangle + \langle B, X\rangle$ over $\St^{n,m}$.
\end{lemma}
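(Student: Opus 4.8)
The plan is to prove the lemma by establishing the algebraic identity
\[
\|U^{\intercal}X - W^{\intercal}\|^2 \;=\; -\langle A, XX^{\intercal}\rangle - \langle B, X\rangle + \|W\|^2
\]
for every $X \in \R^{n\times m}$, and then noting that the final term is a constant, so the objective $\|U^{\intercal}X - W^{\intercal}\|^2$ and the objective $\langle A, XX^{\intercal}\rangle + \langle B, X\rangle$ differ by a constant and a sign over the common feasible set $\St^{n,m}$.

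First I would expand the squared Frobenius norm as $\|U^{\intercal}X - W^{\intercal}\|^2 = \tr\big((X^{\intercal}U - W)(U^{\intercal}X - W^{\intercal})\big)$ and multiply out the four terms. Using the cyclic invariance of the trace, the quadratic term becomes $\tr(X^{\intercal}UU^{\intercal}X) = \tr(UU^{\intercal}XX^{\intercal}) = \langle UU^{\intercal}, XX^{\intercal}\rangle = -\langle A, XX^{\intercal}\rangle$. The two cross terms $-\tr(X^{\intercal}UW^{\intercal})$ and $-\tr(WU^{\intercal}X)$ coincide, since $\tr(M) = \tr(M^{\intercal})$, and together contribute $-2\,\tr(WU^{\intercal}X) = -\langle 2UW^{\intercal}, X\rangle = -\langle B, X\rangle$. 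The remaining term is $\tr(W^{\intercal}W) = \|W\|^2$, which establishes the identity.

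With the identity in hand the lemma is immediate: for any $X \in \St^{n,m}$ we have $\|U^{\intercal}X - W^{\intercal}\|^2 = \|W\|^2 - \big(\langle A, XX^{\intercal}\rangle + \langle B, X\rangle\big)$, and since $\|W\|^2$ is independent of $X$, a matrix $X \in \St^{n,m}$ minimizes the left-hand side over $\St^{n,m}$ if and only if it maximizes $\langle A, XX^{\intercal}\rangle + \langle B, X\rangle$ over $\St^{n,m}$.

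There is no genuine obstacle here: the content is a routine trace computation. The only points needing minor care are tracking the sign from $A = -UU^{\intercal}$ and the factor $2$ in $B = 2UW^{\intercal}$, and confirming that the two cross terms are actually equal. I would also remark that the Stiefel constraint $X^{\intercal}X = I$ is never used in deriving the identity; it is carried along only as the shared feasible set of the two problems.
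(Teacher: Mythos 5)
Your proof is correct and matches the paper's proof essentially exactly: both establish the identity $\|U^{\intercal}X-W^{\intercal}\|^2 = -\langle A, XX^{\intercal}\rangle - \langle B, X\rangle + \|W\|^2$ by expanding the squared Frobenius norm and then observe that $\|W\|^2$ is a constant independent of $X$. Your write-up merely spells out the trace manipulations that the paper leaves implicit, and your closing remark that the Stiefel constraint is unused in deriving the identity is accurate.
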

\begin{proof}
    Note that 
    \[
        \|U^{\intercal}X-W^{\intercal}\|^2 = \|U^{\intercal}X\|^2 - 2 \langle UW^{\intercal}, X\rangle + \|W\|^2  = - \langle A,XX^{\intercal}\rangle - \langle B, X\rangle + \|W\|^2,
    \]
    and that the term $\|W\|^2$ is a constant independent of $X$.
\end{proof}

\begin{example}[UPP and the generalized trust region subproblem]
\label{ex:UPP1}
When $m=1$, the UPP is equivalent to the generalized trust region subproblem of minimizing an inhomogeneous quadratic function on the sphere. We will show how this case can be analyzed with our theory.

In the case $m= 1$, $St^{n,1} = S^{n-1}$ and we wish to maximize
\[
    \max_{x \in S^{n-1}}\;\langle B, x\rangle + x^{\intercal}Ax,
\]
where $B \in \R^n$ and $A \in \R^{n\times n}_{sym}$ are given.
We want to show that the map 
\[
f : S^{n-1} \rightarrow \R^2,\quad x\mapsto  (\langle B, x\rangle, x^{\intercal}Ax)
\] 
is sectioned as long as $\{A, BB^{\intercal}\}$ spans a 1-weakly noncrossing subspace. (Note that this condition is very weak, as there is a dense set of pairs $(A, B) \in \R^{3\times 3} \times \R^{3\times 2}$ satisfying this condition.)
To do this, we first analyze the modified function $\hat{f}(x) =(\langle B, x\rangle^2, x^{\intercal}Ax)$. This modified function is convenient, because  $\langle B, x\rangle^2 = x^{\intercal}BB^{\intercal}x$, and so the function $\langle t, \hat{f}(x)\rangle = x^{\intercal}(t_0A + t_1BB^{\intercal})x$ 
is a quadratic form for any $t \in \R^{2}_{0+}$.

The largest eigenvalue of the matrix $t_0 A + t_1 BB^{\intercal}$ is non-degenerate for all $t_0 \ge 0$ and  $t_1 \in \R$ when $\{A, BB^{\intercal}\}$ spans a 1-weakly noncrossing subspace. It follows that the function $\hat{f}$ is uniquely maximized.
Since $\sqrt{\cdot}$ is strictly concave, we may apply \Cref{lem:concave_composition} to see that $f$ is also uniquely maximized, and thus sectioned.
\end{example}

\subsection{Lagrangian Dual Sections for UPP when $(n,m) = (3,2)$}

We aim to provide a
condition on $A \in \R_{sym}^{3 \times 3}$ and $B \in \R^{3 \times 2}$ which ensures that for any $t \in \R$, the function 
\[
\langle t, f(X)\rangle = t_0\langle A, XX^{\intercal}\rangle + t_1\langle B, X\rangle
\]
has a unique maximizer over $X \in \St^{3,2}$. \Cref{lem:unique_max} then implies that $f(X) = (\langle B, X\rangle, \langle A, XX^{\intercal}\rangle)$ is sectioned in this case.

We will require a technical definition in order to state our claim.

\begin{definition}\label{def:well-positioned}
Fix a pair $(A_0,A_1) \in \R^{3\times 3}_{sym} \times \R^{3\times 3}_{sym}$.
Let $w \in S^{2}$ be an
eigenvector of $A_0$ associated to the largest eigenvalue.

If $(A_0, A_1) \in \R^{3\times 3}_{sym} \times \R^{3\times 3}_{sym}$ 
satisfies three
conditions, we will say that this pair is \emph{well-positioned}.
\begin{enumerate}
    \item $A_0,A_1$ span a 2-dimensional 1-weakly noncrossing subspace.
    \item $A_1$ has no eigenvector orthogonal to $w$ (and in particular, does not have $w$ as an eigenvector).
    \item
    The matrix $A_0 + t^* A_1 - \lambda^* I$ has a positive eigenvalue, where
    $z^* \in \R^3$ is any nonzero solution to the linear equations $w^{\intercal} A_1z^* = w^{\intercal}z^* = 0$,
    and where  $t^*, \lambda^*$ are scalars uniquely determined by $z^*$ and the following equations:
    \[
        (A_0 + t^* A_1 - \lambda^* I)z^* = 0.  
    \]
\end{enumerate}
\end{definition}
\begin{remark}[Remark on \cref{def:well-positioned}]
    Conditions 1 and 2 above are generic in the sense that they hold for all $(A_0, A_1)$ in a dense subset of $\R^{3\times 3}_{sym} \times \R^{3\times 3}_{sym}$.
    Condition 3, however, is not generic in this sense.

    Assuming that  Condition 2 holds, we can see that there is a one dimensional space of solutions to the linear equations $w^{\intercal} A_1z = w^{\intercal}z^* = 0$ by noting that $A_1 w$ and $w$ are only linearly dependent if $w$ is an eigenvector of $A_1$, which we have forbidden. For a fixed $z^*$, we note that the equation $(A_0 + t^* A_1 - \lambda^* I)z^* = 0$ can be viewed as a system of 3 equations in 2 variables, but where the one of the equations is redundant since $w^{\intercal}(A_0 + t A_1 - \lambda I)z^* = 0$ for any $t, \lambda \in \R$. To see that there is exactly one solution to the linear system, note that if there were infinitely many or zero, then there would need to be a solution to the homogenized system $(t A_1 - \lambda I)z^* = 0$, which implies that $z^*$ is an eigenvector of $A_1$. Since $z^*$ is orthogonal to $w$ by definition, this would be a violation of the second assumption.
\end{remark}

\begin{theorem}
\label{thm:UPP32}
    Let $B \in \R^{3 \times 2}$, and 
    $B = \begin{pmatrix} \sigma_1 & 0\\ 0&\sigma_2\\0&0\end{pmatrix}$, where $\sigma_1 > \sigma_2 \ge 0$.
    
    Suppose the pair $(-BB^{\intercal}, A)$ is well-positioned, and the linear systems
    \[
        (t A_1 + \lambda I - BB^{\intercal})
        \begin{pmatrix}
        \sqrt{1-\frac{\sigma_2}{\sigma_1}} \\ 0 \\ \frac{\sigma_2}{\sigma_1}
        \end{pmatrix} = 2\sigma_1\sigma_2
        \begin{pmatrix}0\\0\\1\end{pmatrix}
    \]
    \[
        (t A_1 + \lambda I - BB^{\intercal})
        \begin{pmatrix}
        -\sqrt{1-\frac{\sigma_2}{\sigma_1}} \\ 0 \\ \frac{\sigma_2}{\sigma_1}
        \end{pmatrix} = 2\sigma_1\sigma_2
        \begin{pmatrix}0\\0\\1\end{pmatrix}
    \]
    have no solutions in $t$ and $\lambda$.
    Then the function 
    \[
    f(X) = (\langle B, X\rangle, \langle A, XX^{\intercal}\rangle)
    \]is uniquely maximized on $\St^{3,2}$. In particular, if $(A,BB^{\intercal})$ is well-positioned, then $f$ is sectioned.
\end{theorem}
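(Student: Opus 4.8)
The plan is to invoke \Cref{lem:unique_max}: it suffices to show that for every $t \in \tDomain$, the function $\ell_t(X) = t_0 \langle A, XX^{\intercal}\rangle + t_1 \langle B, X\rangle$ has a unique maximizer over $\St^{3,2}$. Since $\St^{3,2}$ is compact and $\ell_t$ continuous, a maximizer exists; the content is uniqueness. As is typical with these spectral reformulations, I would pass to critical-point conditions. A maximizer $X \in \St^{3,2}$ of $\ell_t$ satisfies a first-order optimality condition: there is a symmetric Lagrange multiplier matrix $\Lambda \in \R_{sym}^{2\times 2}$ (for the constraint $X^{\intercal}X = I$) with $2 t_0 A X + t_1 B = X\Lambda$. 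Because the constraint group $\OO(2)$ acts on $X$ on the right and the objective $\langle B, X\rangle$ is \emph{not} right-$\OO(2)$-invariant, I cannot diagonalize $\Lambda$ freely; instead I would first use the bi-orthogonal freedom to put $B$ into the normal form $\begin{pmatrix}\sigma_1 & 0\\ 0 & \sigma_2\\ 0 & 0\end{pmatrix}$ as in the hypothesis, which is exactly why the theorem is stated with $B$ already in that form.

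Next I would classify the critical points according to the rank/structure of $X$ and the spectral position of the relevant symmetric $3\times 3$ matrix $t_0 A + t_1(\cdot)$ built from the problem data. The key reduction: a second-order (local max) critical point must have its ``leading'' directions aligned with a nondegenerate top eigenvalue of an associated pencil $A_0 + t A_1 - \lambda I$, and the $1$-weakly noncrossing hypothesis (part of well-positionedness) forces that top eigenvalue to be simple for all admissible $t,\lambda$. The eigenvector $w$ of $-BB^{\intercal} = A_0$ for its largest eigenvalue plays a distinguished role: condition 2 of \cref{def:well-positioned} guarantees $A_1$ has no eigenvector orthogonal to $w$, which I would use to rule out degenerate ``crossing'' configurations where two candidate maximizers coexist. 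The two explicit linear systems in the hypothesis (in the variables $t,\lambda$) encode precisely the two potential rank-deficient or boundary critical configurations — the vectors $(\pm\sqrt{1-\sigma_2/\sigma_1},0,\sigma_2/\sigma_1)$ are the candidate ``bad'' second columns of a competitor $X$ — and the assumption that these systems are unsolvable eliminates exactly those competing maximizers. Condition 3 (positivity of an eigenvalue of $A_0 + t^* A_1 - \lambda^* I$) rules out the remaining degenerate family via a second-order / Hessian sign argument.

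Concretely, the step order is: (i) reduce to showing $\ell_t$ has a unique maximizer via \Cref{lem:unique_max}; (ii) write the KKT conditions $2 t_0 A X + t_1 B = X\Lambda$, $X^{\intercal}X = I$; (iii) handle the degenerate multiplier directions ($t_0 = 0$, or $X$ has a column on which $t_1 B$ vanishes) separately, showing these reduce either to the QCQP/eigenvalue case already covered or to the systems forbidden by hypothesis; (iv) in the generic case diagonalize $\Lambda$'s action and reduce the optimality system to one about simple top eigenvalues of a one-parameter family of $3\times 3$ symmetric matrices, where $1$-weak noncrossing gives uniqueness of the eigenvector; (v) check second-order conditions to discard spurious critical points, invoking condition 3; (vi) conclude $\ell_t$ has a unique maximizer, hence $f$ is uniquely maximized, hence (by \Cref{lem:unique_max}) sectioned, and the final sentence follows since well-positionedness is part of the hypotheses.

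The main obstacle I anticipate is step (iii)–(v): the careful case analysis of critical points of $\ell_t$ on $\St^{3,2}$. Unlike the pure linear-inverse-spectral problems of \Cref{sec:noncrossing}, the objective here mixes a quadratic term $\langle A, XX^{\intercal}\rangle$ with a linear term $\langle B, X\rangle$, breaking the right-$\OO(2)$ symmetry, so the Schur--Horn / Fan machinery does not apply directly and one must genuinely work with the Lagrange multiplier matrix and the geometry of $\St^{3,2}$ (which is $3$-dimensional). I expect the bookkeeping — enumerating which pairs of distinct critical points could tie for the maximum, and showing each such coincidence forces one of the two forbidden linear systems to be solvable or violates condition 3 — to be the technically delicate core of the argument, with the well-positionedness conditions having been reverse-engineered precisely to close off each case.
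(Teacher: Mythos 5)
Your proposal shares the correct overall skeleton with the paper (reduce via \Cref{lem:unique_max} to showing each $\ell_t$ has a unique maximizer on $\St^{3,2}$; interpret the two forbidden linear systems as eliminating a specific pair of degenerate competitor configurations), but it misses the paper's central structural reduction and, as a result, points at a genuinely different and substantially harder route whose key steps you yourself flag as unresolved.

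The paper does not work with KKT conditions $2t_0 AX + t_1 B = X\Lambda$ on $\St^{3,2}$ directly. Instead it first \emph{eliminates} the right $\OO(2)$-action by maximizing over it: by Fan's theorem, $\max_{U\in\OO(2)}\langle B, XU\rangle = \|B^{\intercal}Z\|_1$ where $Z = XX^{\intercal}$, so the problem descends to the Grassmannian $\Gr^{3,2}$ with objective $\hat f(Z) = (\|B^{\intercal}Z\|_1, \langle A, Z\rangle)$. Then $\Gr^{3,2}\cong\PP^2$ via $Z = I - zz^{\intercal}$, and an algebraic computation (\Cref{lemma:norm-BZ-n-l}, \Cref{lemma:norm-BZ-n-2}) rewrites $\|B^{\intercal}Z\|_1^2$ as $\tr(BB^{\intercal}Z) + 2\sigma_1\sigma_2\,|w^{\intercal}z|$, so after composing with $\sqrt{\cdot}$ (\Cref{lem:concave_composition}) the Lagrangian becomes $z^{\intercal}Q(t)z + 2\sigma_1\sigma_2 t_0|w^{\intercal}z|$ on $\PP^2$. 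Uniqueness for this inhomogeneous quadratic is then established by \emph{Brickman's theorem} (convexity of the joint image of two quadratics on the sphere) together with the three well-positionedness conditions (\Cref{lem:inhom_quad}, \Cref{lem:inhom_quad_pencil}). The final Stiefel lift (\Cref{lem:stiefel_to_grassmannian}) is where the two explicit linear systems enter: they characterize exactly the $Z = I - zz^{\intercal}$ for which $B^{\intercal}ZB$ has a repeated eigenvalue, i.e.\ where Fan's theorem would give a non-unique optimal $U$. So the vectors $(\pm\sqrt{1-\sigma_2/\sigma_1},\,0,\,\sigma_2/\sigma_1)$ are candidate $z$'s making the Stiefel-to-Grassmannian lift ambiguous, not ``bad second columns of a competitor $X$'' as you wrote.

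The concrete gap in your plan is steps (iii)--(v): you propose to classify critical points of $\ell_t$ on $\St^{3,2}$ by rank and then argue about top eigenvalues of a pencil, but without the Fan/Grassmannian reduction there is no clean way to decouple the linear term $\langle B,X\rangle$ from the quadratic term, the Lagrange multiplier matrix $\Lambda$ cannot be diagonalized compatibly with $B$'s normal form, and no analogue of Brickman's theorem is available on $\St^{3,2}$ directly. This is not just bookkeeping difficulty; the hidden-convexity input (Brickman) that underpins the paper's uniqueness argument has no counterpart in your route, and it is unclear how you would rule out two distinct critical points tying for the maximum without it. If you want to salvage a direct KKT approach, you would effectively need to rediscover the observation that the maximal $U\in\OO(2)$ in $\langle B, XU\rangle$ can be eliminated first, which is precisely what the paper does.
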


Note that, for arbitrary $B \in \R^{3\times 2}$, we can put $B$ in the form required by~\Cref{thm:UPP32} by expressing $A$ and $B$ in a basis of singular vectors for $B$.

At a high level, the proof of~\Cref{thm:UPP32} reduces to proving an analogous property for an inhomogeneous quadratic maximization problem on the sphere. More precisely, we first reduce UPP for well-positioned inputs to an optimization problem over the Grassmannian.
Subsequently, we reduce the $(n,m) = (3,2)$ Grassmannian problem to optimization over the real projective plane $\PP^2.$ We will define these topological spaces when they become relevant to our analysis.

We will establish our results in the order in which they are necessary to prove \Cref{thm:UPP32}, before presenting a proof in \Cref{subsec:UPP32}.

\subsection{A Quadratic Maximization Problem}

We recall the description of real projective space $\PP^{n-1} = S^{n-1} / \{z\sim -z\}$ obtained by identifying antipodal points $z$ and $-z$ on the sphere.

\begin{lemma}
    \label{lem:inhom_quad}
    Let $Q \in \R^{n \times n}_{sym}$ and $w \in \R^n$. If $Q$ has no repeated eigenvalues and $w$ is not orthogonal to the eigenvector of $Q$ with largest eigenvalue,
    then the function $\ell(z) = z^{\intercal}Qz + |w^{\intercal}z|$ has a unique maximum over $z \in \PP^{n-1}$.

\end{lemma}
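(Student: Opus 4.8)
The plan is to handle the function $\ell(z) = z^{\intercal}Qz + |w^{\intercal}z|$ on $\PP^{n-1}$ by passing to the sphere and exploiting the sign of $w^{\intercal}z$. Since $\ell$ is invariant under $z \mapsto -z$, it descends to a well-defined continuous function on $\PP^{n-1}$, and a maximizer exists by compactness. Lifting to $S^{n-1}$, a point $z$ maximizes $\ell$ iff $-z$ does, so it suffices to show that among all maximizers on the sphere, exactly one lies in the closed half-space $\{w^{\intercal}z \ge 0\}$ up to the antipodal identification; equivalently, that the maximizer on $\PP^{n-1}$ is unique. First I would observe that no maximizer $z$ can satisfy $w^{\intercal}z = 0$: at such a point $\ell(z) = z^{\intercal}Qz$, and since $w$ is not orthogonal to the top eigenvector $v_1$ of $Q$, one can perturb $z$ slightly toward $v_1$ (staying on the sphere) to strictly increase $z^{\intercal}Q z$ to first order while picking up a nonnegative contribution $|w^{\intercal}z|$ — actually more carefully, one compares with the top eigenvalue $\lambda_1(Q)$; if $z^\intercal Q z < \lambda_1$ we can increase the quadratic term, and if $z^\intercal Q z = \lambda_1$ then $z = \pm v_1$, but then $|w^\intercal z| = |w^\intercal v_1| > 0$, contradicting $w^\intercal z = 0$. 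Hence every maximizer has $w^{\intercal}z \ne 0$.

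Next, fix a maximizer and choose its sign so that $w^{\intercal}z > 0$. On the open half-space $\{w^{\intercal}z > 0\} \cap S^{n-1}$, the function $\ell$ agrees with the smooth function $g(z) = z^{\intercal}Qz + w^{\intercal}z$, so any such maximizer is a constrained critical point of $g$ on the sphere: there is a Lagrange multiplier $\mu$ with $2Qz + w = 2\mu z$, i.e. $(Q - \mu I)z = -\tfrac12 w$. The key step is to show this system, together with $\|z\| = 1$, has at most one solution with $w^{\intercal}z > 0$ that attains the maximum. Since $Q$ has distinct eigenvalues, I would diagonalize $Q = \sum_i \lambda_i v_i v_i^{\intercal}$ and write $z = \sum_i z_i v_i$, $w = \sum_i w_i v_i$ with $w_1 \ne 0$. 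The critical point equation gives $z_i = \frac{-w_i/2}{\lambda_i - \mu}$ for each $i$ with $\mu \ne \lambda_i$ (and $w_i = 0$ whenever $\mu = \lambda_i$), and the normalization becomes the secular equation $\sum_i \frac{w_i^2/4}{(\lambda_i - \mu)^2} = 1$. One then shows that exactly one root $\mu$ of this secular equation yields the global maximum: arguing as in the classical trust-region analysis, the maximum of $g$ on the sphere is attained at the critical point with the largest multiplier $\mu$, and because $w_1 \ne 0$ this largest root satisfies $\mu > \lambda_1$, which forces $z_1 = \frac{-w_1/2}{\lambda_1 - \mu} > 0$ after possibly absorbing a global sign — crucially, $z$ is then uniquely determined by the formula $z_i = \frac{-w_i/2}{\lambda_i - \mu}$, so the maximizer in this region is unique. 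Combined with the first step, this gives a unique maximizer on $\PP^{n-1}$.

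The main obstacle I anticipate is the bookkeeping around the secular equation: one must verify that the relevant branch $\mu > \lambda_1$ exists (it does, since the left side of the secular equation tends to $+\infty$ as $\mu \downarrow \lambda_1$ and to $0$ as $\mu \to \infty$, using $w_1 \ne 0$), that this branch genuinely corresponds to the global maximum of $g$ rather than merely a critical point, and that it is the \emph{only} critical value with $w^\intercal z > 0$ achieving the maximum. The cleanest way to close this is the standard convexity trick: for $\mu > \lambda_1$ the matrix $\mu I - Q$ is positive definite, and $g(z) = z^\intercal Q z + w^\intercal z$ satisfies $g(z) \le \mu - \tfrac14 w^\intercal (\mu I - Q)^{-1} w$ for all unit $z$, with equality exactly when $z = \tfrac12(\mu I - Q)^{-1} w$ and this vector has unit norm — i.e. exactly when $\mu$ solves the secular equation. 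Since this upper-bounding expression is strictly decreasing in $\mu$ on $(\lambda_1, \infty)$ and the secular equation has a unique root there, the global maximizer on $\{w^\intercal z \ge 0\}$ is unique, completing the proof.
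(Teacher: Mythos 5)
Your route is genuinely different from the paper's. The paper invokes Brickman's theorem on the convexity of the joint numerical range $\cC = \{(z^{\intercal}Qz,\, z^{\intercal}ww^{\intercal}z) : z \in \PP^{n-1}\}$, lifts $\ell$ to the strictly concave function $\hat\ell(c) = c_0 + \sqrt{c_1}$ on $\cC$, deduces that all maximizers share a common $c_1^*$, and then pushes first-order optimality of $\hat\ell$ back down to the linear system $(Q-\lambda I)z^* = -w/2$, whose solvability structure is controlled by the non-orthogonality hypothesis. You instead work directly on the sphere: you rule out $w^{\intercal}z = 0$ at a maximizer by comparison with the top eigenvector $v_1$, reduce to the smooth problem $\max\, z^{\intercal}Qz + w^{\intercal}z$ on the half-sphere $\{w^{\intercal}z \ge 0\}$, and run the classical secular-equation/trust-region machinery. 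Brickman's theorem lets the paper dispose of the nonsmoothness in $|w^{\intercal}z|$ in one stroke, while your argument handles it by an explicit half-space split and a pointwise comparison; both arrive at essentially the same linear system, and the uniqueness comes in both cases from $Q$ having simple spectrum and $w_1 \neq 0$. Either approach is acceptable; yours is more elementary (no appeal to the joint numerical range), at the cost of a bit more bookkeeping to justify the passage from ``critical point of $g$'' to ``global maximizer of $g$''. For that passage you should say explicitly that $\max(g(z), g(-z)) = \ell(z)$, so that a maximizer of $\ell$ on $\PP^{n-1}$ lifts to a global (not merely local) maximizer of $g$ on $S^{n-1}$.

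Two computational slips in your closing paragraph. Completing the square with $P = \mu I - Q \succ 0$ on $\|z\|=1$ gives $g(z) = \mu + \tfrac{1}{4}w^{\intercal}P^{-1}w - \bigl(z - \tfrac{1}{2}P^{-1}w\bigr)^{\intercal}P\bigl(z - \tfrac{1}{2}P^{-1}w\bigr)$, so the bound is $g(z) \le \mu + \tfrac{1}{4}w^{\intercal}(\mu I - Q)^{-1}w$, with a plus sign. Moreover, $h(\mu) := \mu + \tfrac{1}{4}w^{\intercal}(\mu I - Q)^{-1}w$ is not strictly decreasing on $(\lambda_1,\infty)$; it is strictly convex there, with its unique critical point (hence minimizer) exactly at the secular root $\mu^*$ where $\bigl\|\tfrac{1}{2}(\mu^* I - Q)^{-1}w\bigr\| = 1$. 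This is in fact what makes the argument close cleanly: $\max_{\|z\|=1} g(z) \le \inf_{\mu > \lambda_1} h(\mu) = h(\mu^*)$, and equality with strict quadratic slack $-(z - z^*)^{\intercal}(\mu^* I - Q)(z - z^*)$ forces $z = z^* := \tfrac{1}{2}(\mu^* I - Q)^{-1}w$, giving uniqueness. That $\mu^* > \lambda_1$ (strictly) follows as you say from $w_1 \neq 0$, since $\mu^* = \lambda_1$ would require $w \perp v_1$.
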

\begin{proof}

We aim to analyze the maximizer(s) of the function 
    \[
        \ell(z) = z^{\intercal}Qz + \sqrt{z^{\intercal}ww^{\intercal}z},
    \]
where $z\in \PP^{n-1}$. 
Define the set
    \[
        \cC = \{(z^{\intercal}Qz, z^{\intercal}ww^{\intercal}z) \in \mathbb{R}^2 : z \in \PP^{n-1}\}.
    \]
By Brickman's Theorem~\cite[Theorem 14.1]{barvinok}, $\cC$ is convex.
We let $\hat\ell(c)=c_0+\sqrt{c_1}$. 
Then $\ell(z)=\hat \ell(c)$ for $c=(z^{\intercal}Qz , \sqrt{z^{\intercal}ww^{\intercal}z})\in \cC$.

Note that $\hat \ell(c)$ is strictly concave in $c_1$. Therefore, 
the maximizers $(c_0^*,c_1^*)$ of $\hat \ell(c)$ over $\cC$ must all have a unique value of $c_1^*$.
It follows that any $z^*$ maximizing $\ell(z)$ 
satisfies
$z^{*\intercal}ww^{\intercal}z^* = (w^{\intercal}z^*)^2 = c_1^*$.

    We may consider two cases: 
if $c_1^* = 0$, then the second term in $\ell(z)$ vanishes as 
 $w^{\intercal}z^* = 0$.
This implies $z^*$ is a maximizer of the quadratic form $z^\intercal Qz$, and therefore an eigenvector corresponding to the largest eigenvalue of $Q$. 
If there are multiple maximizers of $\hat \ell(c)$, then $Q$ must have degenerate maximum eigenvalue, a contradiction.

If $c_1^* \neq 0$, then $\hat{\ell}(c)$ is differentiable at any maximizer $c^*$, and we may compute $\nabla \hat{\ell}(c) = \left(1, \frac{1}{2\sqrt{c_1^*}}\right)$.
The first order optimality condition on $\hat{\ell}(c)$ implies that any maximizer $c^*$ of $\hat{\ell}(c)$ must in fact maximize the linear function $ c_0 + \frac{c_1}{2\sqrt{c_1^*}}$ on $\cC$. This implies that $z^*$ must be an eigenvector of the matrix 
\[
    M = Q + \frac{ww^{\intercal}}{2\sqrt{c_1^*}}
\]
associated to the eigenvalue $\lambda$, the largest eigenvalue of $M$.
If we choose the representative of $z^*$ so that $w^{\intercal}z^* = \sqrt{c_1^*}$, 
then the eigenvector equation becomes
\[
    Mz^* = Qz^* + \frac{w}{2} = \lambda z^*,
\]
and we see that for any maximizer $z^*$ of $\ell(z)$, we have that 
\[
    (Q-\lambda I)z^* = - \frac{w}{2}.
\]
In order for this to have multiple solutions, we see that $Q-\lambda I$ must be singular, i.e. that $\lambda$ must also be an eigenvalue of $Q$, and we must also have that $w$ is in the image of $(Q-\lambda I)$, i.e. $w$ must be orthogonal to all eigenvectors of $Q$ with eigenvalue $\lambda$. Note that $\lambda$ must in fact be the largest eigenvalue of $Q$, since it is the largest eigenvalue of $M$ and $M \succeq Q$. This is a contradiction, as we have assumed that $w$ is not orthogonal to the eigenvectors of $Q$ with largest eigenvalue.
\end{proof}

We will need to understand when inhomogeneous quadratic optimization problems have a unique maximum \emph{for all $Q$ in a linear matrix pencil}. This has a connection to our notion of a well positioned pair of matrices, as we show in the next lemma.
\begin{lemma}
    \label{lem:inhom_quad_pencil}
    Suppose that $(A_0,A_1) \in \R^{3\times 3}_{sym}\times\R^{3\times 3}_{sym}$ is well-positioned, and that $w$ is the eigenvector of $A_0$ associated to the largest eigenvector of $A_0$.
    For $t \in \R^2$, let $Q(t) = t_0A_0 + t_1 A_1 \in \R^{n\times n}_{sym}$.
    For all $t\in \R^2_{0+}$,
    the function $\ell_t(z) = z^{\intercal}Q(t)z + |w^{\intercal}z|$
    has a unique maximizer over $z\in \mathbb{P}^2$.
\end{lemma}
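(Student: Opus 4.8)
The plan is to reduce the statement to a pointwise application of \Cref{lem:inhom_quad}. First I would note that although \Cref{lem:inhom_quad} is stated for matrices with no repeated eigenvalues, its proof only ever uses that the \emph{largest} eigenvalue of $Q$ is simple and that $w$ is not orthogonal to the associated eigenvector: in the case $c_1^\ast = 0$ only the former is invoked, and in the case $c_1^\ast \ne 0$ the contradiction is reached precisely from these two facts (since the relevant $\lambda$ satisfies $\lambda \ge \lambda_{\max}(Q)$ as $M \succeq Q$). So, fixing $t \in \R^2_{0+}$ and writing $Q(t) = t_0 A_0 + t_1 A_1$, it suffices to verify these two conditions for $Q(t)$. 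The first is immediate from part~1 of \Cref{def:well-positioned}: $Q(t)$ is a nonzero element of the $1$-weakly noncrossing subspace $\Span\{A_0, A_1\}$, so its top eigenvalue is nondegenerate. Let $v(t)$ denote the corresponding unit eigenvector; the crux is to show $w^{\intercal} v(t) \ne 0$.

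Next I would dispatch the two ``coordinate-axis'' rays. If $t_1 = 0$ then $t_0 > 0$ and $Q(t) = t_0 A_0$, so $v(t) = w$ and $w^{\intercal} v(t) = 1$. If $t_0 = 0$ then $t_1 \ne 0$ and $v(t)$ is an eigenvector of $\pm A_1$, hence of $A_1$; part~2 of \Cref{def:well-positioned} forbids $A_1$ from having an eigenvector orthogonal to $w$, so $w^{\intercal} v(t) \ne 0$. This leaves the generic rays, which after rescaling I would parametrize as $t = (1, s)$ with $s \ne 0$.

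For these, I would argue by contradiction, assuming $w^{\intercal} v(t) = 0$. Pairing the eigenvector equation for $v(t)$ against $w$ and using $A_0 w = \mu w$ (with $\mu$ the largest eigenvalue of $A_0$) yields $s\,(A_1 w)^{\intercal} v(t) = 0$, hence $(A_1 w)^{\intercal} v(t) = 0$; thus $v(t)$ is orthogonal to both $w$ and $A_1 w$. Since part~2 of \Cref{def:well-positioned} guarantees $A_1 w \notin \R w$, the vectors $w$ and $A_1 w$ span a plane in $\R^3$, and so $v(t)$ is forced to be (up to sign) the vector $z^\ast$ appearing in part~3 of \Cref{def:well-positioned}. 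Consequently $z^\ast$ is the top eigenvector of $A_0 + s A_1$. I would then show that $z^\ast$ can be an eigenvector of $A_0 + s A_1$ for at most one value of $s$: writing the eigenvector equation in an orthonormal frame adapted to $z^\ast$, one component vanishes automatically (because $w^{\intercal} A_0 z^\ast = \mu(w^{\intercal} z^\ast) = 0$ and $w^{\intercal} A_1 z^\ast = 0$), and the single remaining component is linear in $s$ with nonzero coefficient of $s$ --- again by part~2, since otherwise $z^\ast$ would be a $w$-orthogonal eigenvector of $A_1$. That unique value of $s$, together with its eigenvalue, is exactly the pair $(t^\ast, \lambda^\ast)$ of part~3. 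But part~3 asserts that $A_0 + t^\ast A_1 - \lambda^\ast I$ has a positive eigenvalue, i.e. $z^\ast$ is \emph{not} the top eigenvector of $A_0 + t^\ast A_1$, contradicting $v(t) = \pm z^\ast$. Hence $w^{\intercal} v(t) \ne 0$ on every ray, and \Cref{lem:inhom_quad} (in the form noted above) applies throughout.

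I expect the main obstacle to be exactly the linear-algebra bookkeeping of the last paragraph: verifying that part~2 of the well-positioned hypothesis simultaneously forces every $w$-orthogonal eigenvector of $Q(t)$ to coincide with $z^\ast$ and forces $z^\ast$ to be an eigenvector of $A_0 + s A_1$ for at most one scalar $s$, so that part~3 can be brought to bear to produce the contradiction. The reduction to \Cref{lem:inhom_quad} and the treatment of the coordinate rays are routine by comparison.
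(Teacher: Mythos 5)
Your proof is correct and follows essentially the same route as the paper: reduce to checking the two hypotheses of \Cref{lem:inhom_quad} for $Q(t)$, dispatch the boundary cases, and for interior rays show that a $w$-orthogonal top eigenvector of $Q(t)$ is forced to equal $z^*$, at which point part~3 of \Cref{def:well-positioned} provides the contradiction. The one step you rederive rather than cite is the uniqueness of $(t^*, \lambda^*)$ given $z^*$, which the paper instead offloads to the remark following \Cref{def:well-positioned}; the content is the same.

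Two small places where your write-up is actually more careful than the paper's: (i) you note explicitly that \Cref{lem:inhom_quad} only ever uses simplicity of the \emph{largest} eigenvalue, which is exactly what the $1$-weakly noncrossing hypothesis (part~1 of \Cref{def:well-positioned}) supplies --- the paper's proof asserts that $Q(t)$ ``has no repeated eigenvalues,'' which overstates what part~1 gives; and (ii) you treat the ray $t_0 = 0$ separately, whereas the paper normalizes ``up to scaling, $t_0 = 1$'' without remarking that the case $t_0 = 0$ (which is in $\R^2_{0+}$) is ruled out directly by part~2. Neither of these is a real gap in the paper, but your version tightens them.
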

\begin{proof}

    We argue that if $(A_0, A_1)$ are well-positioned, then $Q(t)$ has no repeated eigenvalues for all $t \in \R^2_{0+}$, and $w$ is not orthogonal to the eigenvector of $Q(t)$ with maximum eigenvalue. This implies by \Cref{lem:inhom_quad} that for all $t \in \R^2_{0+}$, the function $\ell_t$ has a unique maximizer on the sphere. Note that the matrices $Q(t)$ have no repeated eigenvalues, since $A_0$ and $A_1$ span a noncrossing subspace.

    We will first consider the set of $t \in \R^2$ so that $w$ is orthogonal to some eigenvector of $Q(t)$. This is equivalent to the condition that there exists $z \in \PP^2$ and $\lambda \in \R$ so that
    \begin{align}
    \label{eq:eigenvec_ortho}
        Q(t) z  = \lambda z\\
        w^{\intercal} z = 0.\notag
    \end{align}
    Note that this in particular implies that $z$ solves the equation
    \[
        w^{\intercal} Q(t) z  = t_0w^{\intercal}A_0z + t_1w^{\intercal}A_1 z = t_1w^{\intercal}A_1 z = 0.
    \]
    For the second equality, we used the fact that $w$ is an eigenvector of $A_0$ to conclude that $w^{\intercal}A_0z = w^{\intercal}z = 0$.
    Hence, we see that  either $t_1 = 0$, or $w^{\intercal}A_1 z = 0$. In case $t_1 = 0$, we see that $Q(t) = t_0 A_0$, where $t_0 > 0$. Since $w$ is the eigenvector associated to the largest eigenvalue of $A_0$, we see in particular that $w$ is not orthogonal to the largest eigenvector of $t_0A_0$.

    If $w^{\intercal}A_1z = w^{\intercal}z = 0$, then this implies that $z$ is a multiple of $z^*$ appearing in the definition of `well-positioned' (\Cref{def:well-positioned}.) Equations~\eqref{eq:eigenvec_ortho} thus are equivalent to
    \begin{align*}
        (t_0 A_0 + t_1 A_1 - \lambda I) z^* = 0.
    \end{align*}
    Up to scaling, we may assume $t_0 = 1$, which then implies $t_1 = t^*$ and $\lambda = \lambda^*$ in~\Cref{def:well-positioned}.
    We then see from this definition that $Q(t) - \lambda I$ has a positive eigenvalue, implying that $\lambda$ is not the largest eigenvalue of $Q(t)$.
    Thus, for all $t \in \R^2_{0+}$, the conditions of \cref{lem:inhom_quad} hold.
    
    We conclude that for all $t \in \R^2_{0+}$, $\ell_t$ has a unique maximizer, as desired.
\end{proof}

\subsection{A Grassmannian Optimization Problem}
We consider the Grassmannian to be the set of symmetric orthogonal projectors of rank $m$ in $\R^{n\times n}_{sym}$:
\[
    \Gr^{n,m} = \{Z \in \R^{n \times n}_{sym} : Z^2 = Z,\, \tr(Z) = m\}.
\]
Fix $A \in \R^{n\times n}_{sym}$ and $B \in \R^{n \times m}$.
Consider the function
\begin{equation}\label{eq:f-tick-goal}
\hat{f}:\Gr^{n,m}\to\mathbb{R},\quad
    Z \mapsto (\|B^{\intercal}Z\|_1, \langle A, Z\rangle),
\end{equation}
where $\|\cdot\|_1$ 
denotes the trace norm (the sum of a matrix's singular values).

Our aim is to show 
the function in \eqref{eq:f-tick-goal}
is uniquely maximized when $(n,m) = (3,2)$ and $(-BB^{\intercal}, A)$ satisfies the conditions of \cref{thm:UPP32}.
To do this, we will apply \cref{lem:concave_composition} to the following function, which we show is uniquely maximized on $\Gr^{3,2}$:
\begin{equation}
\label{eq:def_tilde_f}
    \tilde{f}(Z) = (\|B^{\intercal}Z\|_1^2, \langle A, Z\rangle).
\end{equation}

Before we prove these results, we will need some algebraic lemmas. 
\begin{lemma}\label{lemma:norm-BZ-n-l}
    Suppose that $B \in \R^{n \times 2}$, and that $\Pi_B \in \R^{n \times 2}$ is the matrix of left singular vectors of $B$.
    If $Z \in \Gr^{n,2}$, then 
    \begin{equation}\label{eq:norm-BZ-n-2}
        \|B^{\intercal}Z\|_1 = \sqrt{\tr(BB^{\intercal}Z) + 2\sigma_1(B) \sigma_2(B)\sqrt{\det(\Pi_B^{\intercal}Z\Pi_B)}}.
     \end{equation}
\end{lemma}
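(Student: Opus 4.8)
The plan is to compute $\|B^\intercal Z\|_1$ for $Z \in \Gr^{n,2}$ by reducing to a $2 \times 2$ situation. Write $B = \Pi_B \Sigma R^\intercal$ for the compact SVD, where $\Pi_B \in \R^{n\times 2}$ has orthonormal columns (the left singular vectors), $\Sigma = \diag(\sigma_1(B), \sigma_2(B))$, and $R \in \R^{2\times 2}$ is orthogonal. Since the trace norm is invariant under right multiplication by an orthogonal matrix, $\|B^\intercal Z\|_1 = \|R \Sigma \Pi_B^\intercal Z\|_1 = \|\Sigma \Pi_B^\intercal Z\|_1$. The matrix $\Sigma \Pi_B^\intercal Z$ has rank at most $2$, so its trace norm is the sum of its two singular values, which are the square roots of the eigenvalues of the $2\times 2$ matrix $M := \Sigma \Pi_B^\intercal Z Z^\intercal \Pi_B \Sigma = \Sigma (\Pi_B^\intercal Z \Pi_B) \Sigma$, using $Z^2 = Z = Z^\intercal$.

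Next I would invoke the elementary identity that for a $2\times 2$ positive semidefinite matrix $M$, the sum of the square roots of its eigenvalues satisfies $(\sqrt{\mu_1} + \sqrt{\mu_2})^2 = \mu_1 + \mu_2 + 2\sqrt{\mu_1 \mu_2} = \tr(M) + 2\sqrt{\det(M)}$. Applying this with $M = \Sigma(\Pi_B^\intercal Z\Pi_B)\Sigma$ gives
\[
\|B^\intercal Z\|_1^2 = \tr\!\big(\Sigma \Pi_B^\intercal Z \Pi_B \Sigma\big) + 2\sqrt{\det\!\big(\Sigma \Pi_B^\intercal Z \Pi_B \Sigma\big)}.
\]
For the first term, $\tr(\Sigma \Pi_B^\intercal Z \Pi_B \Sigma) = \tr(\Pi_B \Sigma^2 \Pi_B^\intercal Z) = \tr(BB^\intercal Z)$, since $BB^\intercal = \Pi_B \Sigma R^\intercal R \Sigma \Pi_B^\intercal = \Pi_B \Sigma^2 \Pi_B^\intercal$. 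For the second term, $\det(\Sigma \Pi_B^\intercal Z \Pi_B \Sigma) = \det(\Sigma)^2 \det(\Pi_B^\intercal Z \Pi_B) = (\sigma_1(B)\sigma_2(B))^2 \det(\Pi_B^\intercal Z \Pi_B)$, and taking the square root yields $\sigma_1(B)\sigma_2(B)\sqrt{\det(\Pi_B^\intercal Z\Pi_B)}$ (note $\Pi_B^\intercal Z \Pi_B$ is PSD, being a compression of the PSD matrix $Z$, so its determinant is nonnegative and the square root is real). Substituting these and taking square roots gives exactly \eqref{eq:norm-BZ-n-2}.

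The only mild subtlety — and the step I would be most careful about — is the rank bookkeeping: one must check that $\Sigma \Pi_B^\intercal Z$ genuinely has at most two nonzero singular values (immediate, since it is a product in which $\Sigma$ is $2\times 2$) so that its trace norm really equals $\sqrt{\mu_1} + \sqrt{\mu_2}$ with $\mu_1, \mu_2$ the eigenvalues of the $2\times 2$ Gram matrix $M$, including the degenerate cases where one or both $\mu_i$ vanish (there the formula $(\sqrt{\mu_1}+\sqrt{\mu_2})^2 = \tr M + 2\sqrt{\det M}$ still holds, with $\det M = 0$). Everything else is a routine manipulation of traces and determinants of small matrices together with orthogonal invariance of the trace norm.
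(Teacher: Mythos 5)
Your proof is correct and takes essentially the same approach as the paper: both write $\|B^\intercal Z\|_1 = \sigma_1 + \sigma_2$, square it to produce the trace of a $2\times 2$ Gram matrix plus twice the square root of its determinant, and use the SVD of $B$ to simplify the determinant term. The only difference is cosmetic---you substitute the SVD at the outset to work with $\Sigma\Pi_B^\intercal Z$ and its Gram matrix $\Sigma(\Pi_B^\intercal Z\Pi_B)\Sigma$, while the paper works with $B^\intercal Z$ and the orthogonally similar Gram matrix $B^\intercal Z B$ directly, invoking the SVD only at the end---and there is a tiny wording slip (the factor $R$ you cancel multiplies $\Sigma\Pi_B^\intercal Z$ on the left, so you are using left rather than right unitary invariance of the trace norm, which is harmless).
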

\begin{proof}
    Throughout the proof, for a $n\times 2$ matrix $M$, we let $\sigma_1(M)$ and $\sigma_2(M)$ denote the first and second singular values of $M$.
    By definition, 
    \[
        \|B^{\intercal}Z\|_1 = \sigma_1(B^{\intercal}Z) + \sigma_2(B^{\intercal}Z).
    \]
    Squaring each side, we get
    \[
        \|B^{\intercal}Z\|_1^2 = \sigma_1(B^{\intercal}Z)^2 + \sigma_2(B^{\intercal}Z)^2 + 2\sigma_1(B^{\intercal}Z)\sigma_2(B^{\intercal}Z).
    \]
    We note that 
    \[
        \sigma_1(B^{\intercal}Z)^2 + \sigma_2(B^{\intercal}Z)^2 = \|B^{\intercal}Z\|_2^2 = \tr(B^{\intercal}ZB),
    \]
    where we 
    used the fact that $Z^2 = Z$.

    We also note that
    \[
        \sigma_1(B^{\intercal}Z)\sigma_2(B^{\intercal}Z) = \sqrt{\det(B^{\intercal}ZB)}.
    \]

    Finally, we apply singular value decomposition of $B$ in order to write
    \[
        B = \Pi_B \begin{pmatrix}\sigma_1(B) & \\ & \sigma_2(B)\end{pmatrix} V^{\intercal},
    \]
    where $V \in O(2)$ and note using the multiplicative property of determinants that
    \[
        \det(B^{\intercal}ZB) =  \sigma_1(B)^2\sigma_2(B)^2\det(V)^2\det(\Pi_B^{\intercal}Z\Pi_B).
    \]
    The result follows.
\end{proof}

\begin{lemma}\label{lemma:norm-BZ-n-2}
    Suppose that $\Pi \in \R^{n \times (n-1)}$ is a matrix such that
    $\det(\Pi^{\intercal}\Pi) = 1$.
    If $z\in S^{n-1}$,  then
    \[
    \det(\Pi^{\intercal}(I-zz^{\intercal})\Pi) = z^{\intercal}(I-\Pi\Pi^{\intercal})z.
     \]
\end{lemma}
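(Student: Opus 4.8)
The identity to establish is $\det(\Pi^{\intercal}(I - zz^{\intercal})\Pi) = z^{\intercal}(I - \Pi\Pi^{\intercal})z$ for $\Pi \in \R^{n\times(n-1)}$ with $\det(\Pi^{\intercal}\Pi) = 1$ and $z \in S^{n-1}$. My plan is to expand the left-hand determinant using the matrix determinant lemma (rank-one update formula). Writing $M = \Pi^{\intercal}\Pi$, which is invertible since $\det M = 1$, and $v = \Pi^{\intercal}z \in \R^{n-1}$, the left side becomes
\[
\det(\Pi^{\intercal}\Pi - \Pi^{\intercal}zz^{\intercal}\Pi) = \det(M - vv^{\intercal}) = \det(M)\left(1 - v^{\intercal}M^{-1}v\right) = 1 - z^{\intercal}\Pi(\Pi^{\intercal}\Pi)^{-1}\Pi^{\intercal}z.
\]
Here I used $\det M = 1$ in the last step. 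So it remains to identify $1 - z^{\intercal}\Pi(\Pi^{\intercal}\Pi)^{-1}\Pi^{\intercal}z$ with $z^{\intercal}(I - \Pi\Pi^{\intercal})z$.

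The key observation is that $P := \Pi(\Pi^{\intercal}\Pi)^{-1}\Pi^{\intercal}$ is the orthogonal projector onto the column space of $\Pi$, a subspace of dimension $n-1$ (the columns of $\Pi$ are linearly independent because $\Pi^{\intercal}\Pi$ is invertible). Its orthogonal complement is a line, so $I - P = uu^{\intercal}$ for some unit vector $u$ spanning that line. Since $z \in S^{n-1}$, we get $z^{\intercal}(I - P)z = (u^{\intercal}z)^2$ and $1 - z^{\intercal}Pz = z^{\intercal}z - z^{\intercal}Pz = z^{\intercal}(I-P)z = (u^{\intercal}z)^2$, so the left side equals $(u^{\intercal}z)^2$.

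For the right side, I need $z^{\intercal}(I - \Pi\Pi^{\intercal})z$ to also equal $(u^{\intercal}z)^2$, which requires $(I - \Pi\Pi^{\intercal})z = (u^{\intercal}z)u$, i.e. it requires that $\Pi\Pi^{\intercal}$ act on $z$ the same way $P = \Pi(\Pi^{\intercal}\Pi)^{-1}\Pi^{\intercal}$ does. This is exactly where the hypothesis $\det(\Pi^{\intercal}\Pi) = 1$ must be used more carefully: in general $\Pi\Pi^{\intercal} \neq P$. The cleanest route is to reduce to the case $\Pi^{\intercal}\Pi = I$ (columns orthonormal) by a change of basis: write $\Pi = \Pi_0 R$ where $\Pi_0$ has orthonormal columns and $R \in \R^{(n-1)\times(n-1)}$ with $\det(R)^2 = \det(\Pi^{\intercal}\Pi) = 1$; then $\Pi^{\intercal}(I - zz^{\intercal})\Pi = R^{\intercal}\Pi_0^{\intercal}(I-zz^{\intercal})\Pi_0 R$, whose determinant is $\det(R)^2 \det(\Pi_0^{\intercal}(I-zz^{\intercal})\Pi_0) = \det(\Pi_0^{\intercal}(I-zz^{\intercal})\Pi_0)$, and simultaneously $\Pi\Pi^{\intercal}$ need not equal $\Pi_0\Pi_0^{\intercal}$, so that substitution does \emph{not} obviously preserve the right-hand side. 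Thus the main obstacle is handling the right-hand side: I expect the intended reading is that $\Pi$ itself has orthonormal columns (so $\Pi\Pi^{\intercal} = P$ and the identity is immediate), or else that the statement is applied in \Cref{lemma:norm-BZ-n-l} only with $\Pi = \Pi_B$ a matrix of orthonormal singular vectors. Under the assumption $\Pi^{\intercal}\Pi = I$, both sides equal $(u^{\intercal}z)^2$ where $u$ spans $(\operatorname{col}\Pi)^{\perp}$, and the proof is complete; I would state the argument in that form, noting the determinant-lemma computation as the substantive step and the projector identity $I - \Pi\Pi^{\intercal} = uu^{\intercal}$ as the geometric one.
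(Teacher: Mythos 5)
You are right to be suspicious: the lemma is false as stated, and your proposed fix is the intended reading. For a concrete counterexample take $n=3$, let $\Pi$ be the $3\times 2$ matrix with rows $(2,0)$, $(0,\tfrac12)$, $(0,0)$, and let $z = e_1$. Then $\Pi^{\intercal}\Pi = \diag(4,\tfrac14)$ has determinant $1$, but the left side is $\det\bigl(\diag(0,\tfrac14)\bigr) = 0$ while the right side is $1-4=-3$. The paper's own proof commits the error you anticipated: it asserts that the Matrix Determinant Lemma gives
\[
\det\bigl(\Pi^{\intercal}(I-zz^{\intercal})\Pi\bigr) = \det(\Pi^{\intercal}\Pi)\bigl(1- z^{\intercal}\Pi\Pi^{\intercal}z\bigr),
\]
but the lemma actually gives $\det(\Pi^{\intercal}\Pi)\bigl(1- z^{\intercal}\Pi(\Pi^{\intercal}\Pi)^{-1}\Pi^{\intercal}z\bigr)$; the factor $(\Pi^{\intercal}\Pi)^{-1}$ has been dropped, and the two expressions agree only when $\Pi^{\intercal}\Pi = I$. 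That stronger hypothesis---orthonormal columns, not merely $\det(\Pi^{\intercal}\Pi)=1$---is in fact what holds in every downstream use: in~\Cref{lemma:norm-BZ-n-l} the matrix $\Pi_B$ consists of orthonormal left singular vectors, and in~\Cref{lem:f_tilde_um} the product $\Pi_B\Pi_B^{\intercal}$ is treated as an orthogonal projector. Under $\Pi^{\intercal}\Pi = I$, your argument and the paper's intended one coincide and reduce to a one-liner: with $v = \Pi^{\intercal}z$, the Matrix Determinant Lemma gives $\det(I-vv^{\intercal}) = 1 - v^{\intercal}v = 1 - z^{\intercal}\Pi\Pi^{\intercal}z$, which equals $z^{\intercal}(I-\Pi\Pi^{\intercal})z$ since $z^{\intercal}z=1$. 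The projector-and-unit-vector digression in your writeup is correct but unnecessary once the hypothesis is corrected.
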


\begin{proof}
    This follows from the Matrix Determinant lemma, which implies that
    \[
    \det(\Pi^{\intercal}(I-zz^{\intercal})\Pi) = \det(\Pi^{\intercal}\Pi)(1- z^{\intercal}\Pi\Pi^{\intercal}z) = z^{\intercal}(I-\Pi\Pi^{\intercal})z.
     \]
\end{proof}

\begin{lemma}
\label{lem:f_tilde_um}
    Suppose that $A \in \R^{3 \times 3}_{sym}$ and $B \in \R^{3 \times 2}$ satisfy the property that $(-BB^{\intercal}, A)$ is well-positioned. Then the function $\tilde{f}$ defined in \eqref{eq:def_tilde_f} is uniquely maximized.
\end{lemma}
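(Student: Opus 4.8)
The plan is to transport the problem from $\Gr^{3,2}$ to the real projective plane $\PP^2$ and recognize the resulting objective as an instance of the pencil of inhomogeneous quadratic optimization problems governed by \Cref{lem:inhom_quad_pencil}. Since the orthogonal complement of a two-dimensional subspace of $\R^3$ is a line, every $Z \in \Gr^{3,2}$ has the form $Z = I - zz^{\intercal}$ for a $z \in \PP^2$ that is unique up to sign, so $z \mapsto I - zz^{\intercal}$ is a bijection $\PP^2 \to \Gr^{3,2}$. Hence it suffices to show that for each $t \in \R^2_{0+}$ the function $z \mapsto \langle t, \tilde f(I - zz^{\intercal})\rangle$ has a unique maximizer on $\PP^2$.

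I would then make this objective explicit. Write $A_0 = -BB^{\intercal}$ and $A_1 = A$, the pair assumed well-positioned. From $\langle A, I - zz^{\intercal}\rangle = \tr A - z^{\intercal}Az$, and from \Cref{lemma:norm-BZ-n-l} combined with \Cref{lemma:norm-BZ-n-2} applied with $\Pi = \Pi_B$ (noting $\Pi_B^{\intercal}\Pi_B = I_2$, and that $I - \Pi_B\Pi_B^{\intercal}$ is the rank-one projector $ww^{\intercal}$ onto $\ker(BB^{\intercal})$), one gets
\[
    \|B^{\intercal}(I - zz^{\intercal})\|_1^2 = \tr(BB^{\intercal}) - z^{\intercal}BB^{\intercal}z + 2\,\sigma_1(B)\,\sigma_2(B)\,|w^{\intercal}z|,
\]
and therefore, absorbing the $z$-independent terms into a constant $C_t$,
\[
    \langle t, \tilde f(I - zz^{\intercal})\rangle = C_t + z^{\intercal}\bigl(t_0 A_0 + (-t_1)A_1\bigr)z + 2\,t_0\,\sigma_1(B)\,\sigma_2(B)\,|w^{\intercal}z|.
\]
It remains to match $w$ with the distinguished vector of \Cref{def:well-positioned}. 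Since $A_0,A_1$ span a two-dimensional subspace, $BB^{\intercal}\neq 0$; since that subspace is $1$-weakly noncrossing, $A_0=-BB^{\intercal}$ has a simple largest eigenvalue, which forces $\rank B = 2$. Consequently $\ker(BB^{\intercal})$ is one-dimensional (so $w$ above is well-defined up to sign), $\sigma_1(B)\ge\sigma_2(B)>0$, and the simple largest eigenvalue of $A_0$ is $0$, with eigenvector $w$ — precisely the $w$ featuring in \Cref{def:well-positioned} and \Cref{lem:inhom_quad_pencil} for $(A_0,A_1)$.

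The argument then splits on $t_0$. If $t_0 > 0$, then $2t_0\sigma_1(B)\sigma_2(B) > 0$, so dividing the objective by this quantity — which does not change its set of maximizers — puts it in the form $z^{\intercal}Q(s)z + |w^{\intercal}z|$, where $Q(s) = s_0 A_0 + s_1 A_1$ is the pencil of \Cref{lem:inhom_quad_pencil} and $s = \bigl(\tfrac{1}{2\sigma_1(B)\sigma_2(B)},\, \tfrac{-t_1}{2t_0\sigma_1(B)\sigma_2(B)}\bigr)$, whose first coordinate is positive, so $s \in \R^2_{0+}$. \Cref{lem:inhom_quad_pencil} then yields a unique maximizer on $\PP^2$. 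If $t_0 = 0$, then $t_1 \neq 0$ and, modulo the constant, the objective is $-t_1\,z^{\intercal}Az$, whose maximum over $\PP^2$ is attained uniquely if and only if the largest eigenvalue of $-t_1 A$ is nondegenerate; as $A$ and $-A$ are both nonzero members of the $1$-weakly noncrossing span $\Span\{A_0, A_1\}$, their largest eigenvalues are nondegenerate, covering both signs of $t_1$. In every case the maximizer on $\PP^2$ is unique, hence so is the maximizer of $\langle t, \tilde f(\cdot)\rangle$ on $\Gr^{3,2}$, which is the assertion of \Cref{lem:f_tilde_um}.

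I expect the only genuinely delicate part to be the middle step: verifying that the vector $w$ produced by the determinant identity of \Cref{lemma:norm-BZ-n-2} coincides with the vector $w$ — the top eigenvector of $A_0=-BB^{\intercal}$ — in terms of which \Cref{def:well-positioned} and \Cref{lem:inhom_quad_pencil} are phrased, together with the elementary but easy-to-mismanage sign reversal $t_1 \mapsto -t_1$ that keeps the rescaled parameter inside $\R^2_{0+}$. Once this matching is in place, both cases are immediate consequences of \Cref{lem:inhom_quad_pencil} and of the nondegeneracy of the extreme eigenvalues of $\pm A$ afforded by $1$-weak noncrossing.
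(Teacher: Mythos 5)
Your proof follows the same route as the paper's: pass to $\PP^2$ via $Z = I - zz^{\intercal}$, expand the objective using \Cref{lemma:norm-BZ-n-l} and \Cref{lemma:norm-BZ-n-2}, and invoke \Cref{lem:inhom_quad_pencil}. You are in fact somewhat more careful than the paper at the step of matching the resulting objective to the form $z^{\intercal}Q(s)z + |w^{\intercal}z|$ that \Cref{lem:inhom_quad_pencil} requires: you explicitly rescale by $2t_0\sigma_1(B)\sigma_2(B)$ and check that the new parameter $s$ (which involves the sign flip $t_1 \mapsto -t_1$) still lies in $\R^2_{0+}$, and you handle $t_0 = 0$ separately — where the rescaling degenerates — by appealing directly to the $1$-weak noncrossing of $\Span\{A_0,A_1\}$ to get a simple top eigenvalue for $\pm A$. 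The paper leaves both of those points implicit. Your intermediate verification that $\rank B = 2$ (so that $\sigma_1(B)\geq\sigma_2(B)>0$ and the determinant-lemma vector $w$ coincides with the top eigenvector of $A_0 = -BB^{\intercal}$ appearing in \Cref{def:well-positioned}) is also correct.
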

\begin{proof}
    Because $k = 1$ in this setting, it suffices to show that for any $t \in \R^{2}_{0+}$, we have that the function
    \[
        \ell_t(Z) = \langle t, \tilde f(Z) \rangle = t_0\|B^{\intercal}Z\|_1^2 + t_1\langle A, Z\rangle
    \]
    has a unique maximizer in $\Gr^{3,2}$.
    
    Note that any matrix $Z \in \Gr^{3,2}$ can be uniquely represented as $I - zz^{\intercal}$ for $z \in \PP^2$.
    Combining \Cref{lemma:norm-BZ-n-2,lemma:norm-BZ-n-l}, we obtain that 
    \begin{align*}
        \ell_t(I-zz^{\intercal}) 
        &= t_0\|B^{\intercal} (I-zz^{\intercal})\|_1^2 + t_1\langle A, I-zz^{\intercal}\rangle\\
        &= \langle t_0BB^{\intercal}+t_1A, (I-zz^{\intercal})\rangle + 2\sigma_1(B)\sigma_2(B)t_0\sqrt{z^{\intercal}(I-\Pi_B\Pi_B^{\intercal})z}
    \end{align*}
    Note that because $\Pi_B\Pi_B^{\intercal}$ is a rank 2 orthogonal projector, we have $I - \Pi_B\Pi_B^{\intercal} = ww^{\intercal}$ for some $w \in \R^3$, where $w$ is orthogonal to the image of $B$.
    
    If we let $Q(t) = -t_0BB^{\intercal} - t_1A$, then the maximizers of $\ell_t(I-zz^{\intercal})$ are in 1-1 correspondence with the maximizers of the function
    \[
        z^{\intercal}Q(t)z + 2\sigma_1(B)\sigma_2(B)t_0|w^{\intercal}z|.
    \]
    \Cref{lem:inhom_quad_pencil} implies that the $\ell_t$ has a unique maximizer for all $t \in \R^2_{0+}$. This shows that $f$ is uniquely maximized, as desired.
\end{proof}
\begin{lemma}
    If $A \in \R^{3\times 3}_{sym}$ and $B \in \R^{3\times 2}$ satisfy the property that $(-BB^{\intercal}, A)$ is well positioned, then the function $\hat{f}(Z) = (\|B^{\intercal}Z\|_1, \langle A, Z\rangle)$ is uniquely maximized over the Grassmannian.
\end{lemma}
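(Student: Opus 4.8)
The plan is to deduce the uniquely-maximized property for $\hat f(Z) = (\|B^{\intercal}Z\|_1, \langle A, Z\rangle)$ from the already-established fact (\Cref{lem:f_tilde_um}) that $\tilde f(Z) = (\|B^{\intercal}Z\|_1^2, \langle A, Z\rangle)$ is uniquely maximized, by invoking \Cref{lem:concave_composition}. The two functions differ only in the first coordinate, where $\hat f_0 = \sqrt{\tilde f_0}$ and the remaining coordinate is unchanged. Since $\|B^{\intercal}Z\|_1 \ge 0$ on the compact domain $\Gr^{3,2}$, composing with $g(u) = \sqrt{u}$, which is monotonically increasing and strictly concave on $\R_{\ge 0}$, recovers $\hat f$ from $\tilde f$.

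First I would note that $\Gr^{3,2}$ is compact (it is a closed bounded subset of $\R^{3\times 3}_{sym}$, being cut out by the polynomial equations $Z^2 = Z$, $\tr(Z) = 2$), so that the hypotheses needed to apply the lemmas in \Cref{sec:unique_maximization} are in force. Second, I would apply \Cref{lem:f_tilde_um} under the well-positionedness hypothesis on $(-BB^{\intercal}, A)$ to conclude that $\tilde f$ is uniquely maximized. Third, I would observe that $g(u) = \sqrt u$ is monotonically increasing and strictly concave, and that $\tilde f_0(Z) = \|B^{\intercal}Z\|_1^2 \ge 0$ always, so $g(\tilde f_0(Z)) = \|B^{\intercal}Z\|_1 = \hat f_0(Z)$ and $\hat f = (g(\tilde f_0), \tilde f_1)$. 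Applying \Cref{lem:concave_composition} with $\tilde f$ in the role of the uniquely-maximized map and $g = \sqrt{\cdot}$ then yields that $\hat f$ is uniquely maximized over $\Gr^{3,2}$.

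There is essentially no obstacle here: the content was already done in \Cref{lem:f_tilde_um} (which in turn reduced, via the algebraic identities of \Cref{lemma:norm-BZ-n-l,lemma:norm-BZ-n-2}, to the quadratic pencil analysis of \Cref{lem:inhom_quad_pencil}), and this final lemma is a one-line corollary via the composition lemma. The only thing to be slightly careful about is the domain of $g$: \Cref{lem:concave_composition} is stated for $g : \R \to \R$, so strictly speaking I would extend $\sqrt{\cdot}$ to a monotonically increasing strictly concave function on all of $\R$ (for instance by any smooth strictly concave increasing extension to the negative reals), noting that the extension's values on $(-\infty, 0)$ are irrelevant since $\tilde f_0$ is nonnegative; alternatively one observes that the proof of \Cref{lem:concave_composition} only ever evaluates $g$ on the range of $\tilde f_0$, so the nonnegativity of $\|B^{\intercal}Z\|_1^2$ suffices. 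Either way the argument is immediate.
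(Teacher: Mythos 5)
Your proof is correct and takes essentially the same route as the paper: rewrite $\hat f_0 = \sqrt{\tilde f_0}$, invoke \Cref{lem:f_tilde_um} for $\tilde f$, and conclude via \Cref{lem:concave_composition} with $g = \sqrt{\cdot}$. Your remark about extending $\sqrt{\cdot}$ to a monotone strictly concave function on all of $\R$ (or noting the proof only evaluates $g$ on the nonnegative range of $\tilde f_0$) is a worthwhile small bit of care that the paper glosses over.
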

\begin{proof}
    Note that
    \[
        (\|B^{\intercal}Z\|_1, \langle A, Z\rangle) = 
        (\sqrt{\|B^{\intercal}Z\|_1^2}, \langle A, Z\rangle),
    \]
    and so since $\sqrt{\cdot}$ is a strictly concave continuous function, \cref{lem:concave_composition} yields that $\hat{f}(Z)$ is uniquely maximized as long as $\tilde{f}(Z)$ is. The fact that $\tilde{f}(Z)$ is uniquely maximized is precisely \cref{lem:f_tilde_um}.
\end{proof}
\subsection{Proof of Unique Maximization when $(n,m) = (3,2)$}
\label{subsec:UPP32}
When $A$ and $B$ satisfy the conditions of \cref{thm:UPP32}, we will show that the function $f(X) = (\langle B, X\rangle, \langle A, XX^{\intercal})$ is uniquely maximized over $\St^{3,2}.$

We will first note that there is an action of the group $O(2)$ on $\St^{3,2}$ by taking $(X,U) \in \St^{3,2} \times O(2)$ to $X U$.
We can characterize the orbits of this action using the Grassmannian $\Gr^{3,2}$: the relation $X = X'U$ for some $U \in O(2)$ is equivalent to the claim that $XX^{\intercal} = X'X'^{\intercal}$.
This action preserves the expression $\langle A, XX^{\intercal}\rangle$, so maximizing $f(X U)$ and $\langle B, XU\rangle $ over $U \in O(2)$ are equivalent.
\begin{lemma}
    \label{lem:stiefel_to_grassmannian}
    For any $B \in \R^{3 \times 2}$ and $X \in \St^{3,2}$, let  $Z = XX^{\intercal}$. We have
\begin{equation}\label{eq:stiefel_to_grassmannian}
    \max_{U \in O(2)} \langle B, XU\rangle = \|B^{\intercal}Z\|_1.
    \end{equation}    
    If further, $B = \begin{pmatrix} \sigma_1 & 0 \\ 0 & \sigma_2 \\ 0 & 0\end{pmatrix}$ (i.e. we are the singular vector basis for $B$), then there is a unique $U$ achieving this maximum value as long as $Z$ is not of the form
    \[
        I -
        \begin{pmatrix}
        \pm \frac{\sqrt{\sigma_1^2 - \sigma_2^2}}{\sigma_1}\\ 0\\\pm \frac{\sigma_2}{\sigma_1}
        \end{pmatrix}
        \begin{pmatrix}
        \pm \frac{\sqrt{\sigma_1^2 - \sigma_2^2}}{\sigma_1}\\ 0\\\pm \frac{\sigma_2}{\sigma_1}
        \end{pmatrix}^{\intercal}.
    \]
\end{lemma}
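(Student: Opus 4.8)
The plan is to establish the identity \eqref{eq:stiefel_to_grassmannian} and the uniqueness assertion separately; both reduce to the orthogonal Procrustes problem together with the determinant identities already proved in \Cref{lemma:norm-BZ-n-l,lemma:norm-BZ-n-2}.

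For the identity, I would first rewrite $\langle B, XU\rangle = \tr\!\big((XU)^{\intercal}B\big) = \tr\!\big(U^{\intercal}X^{\intercal}B\big) = \langle U, X^{\intercal}B\rangle$, so that $\max_{U\in O(2)}\langle B, XU\rangle$ is the maximum of a fixed linear functional over $O(2)$. A standard diagonalization argument (carried out explicitly in the next paragraph) shows this maximum equals the nuclear norm $\|X^{\intercal}B\|_{1}$, attained exactly at the orthogonal polar factors of $X^{\intercal}B$. To identify $\|X^{\intercal}B\|_{1}$ with $\|B^{\intercal}Z\|_{1}$, I would use $Z = Z^{\intercal}$, $Z^{2}=Z$ and $Z = XX^{\intercal}$ to get
\[
(B^{\intercal}Z)(B^{\intercal}Z)^{\intercal} = B^{\intercal}Z^{2}B = B^{\intercal}XX^{\intercal}B = (B^{\intercal}X)(B^{\intercal}X)^{\intercal},
\]
so the $2\times 3$ matrix $B^{\intercal}Z$ and the $2\times 2$ matrix $B^{\intercal}X$ have the same Gram matrix, hence the same multiset of nonzero singular values; together with $\|X^{\intercal}B\|_{1}=\|B^{\intercal}X\|_{1}$ this gives \eqref{eq:stiefel_to_grassmannian}.

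For the uniqueness claim, the maximizers of $\langle U, M\rangle$ over $O(2)$, with $M := X^{\intercal}B$, are exactly the orthogonal polar factors of $M$: writing an SVD $M = P\Sigma Q^{\intercal}$, $\Sigma = \diag(s_1,s_2)$, $s_1\ge s_2\ge 0$, a maximizer $U$ must satisfy $(P^{\intercal}UQ)_{ii}=1$ whenever $s_i>0$. When $s_2>0$ this forces $P^{\intercal}UQ = I$, giving a unique $U$; when $s_2=0<s_1$ there are exactly two admissible $U$; and $M=0$ is impossible when $\sigma_2>0$, since the $2$-plane $\mathrm{col}(B)$ cannot lie in the line $\mathrm{col}(X)^{\perp}$. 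So the maximizer is unique if and only if $\det M\neq 0$.

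It remains to turn $\det M\neq 0$ into an explicit condition on $Z$ and to match the resulting exceptional set with the one displayed. Since $(\det M)^{2}=\det(MM^{\intercal})=\det(B^{\intercal}XX^{\intercal}B)=\det(B^{\intercal}ZB)$, non-uniqueness occurs exactly when $\det(B^{\intercal}ZB)=0$. For $B=\begin{pmatrix}\sigma_1&0\\0&\sigma_2\\0&0\end{pmatrix}$ the left singular vectors are $e_1,e_2$, so $\Pi_B=(e_1\ e_2)$ and $I-\Pi_B\Pi_B^{\intercal}=e_3e_3^{\intercal}$, and the determinant computation underlying \Cref{lemma:norm-BZ-n-l,lemma:norm-BZ-n-2} gives, for $Z=I-zz^{\intercal}$ with $z\in S^{2}$,
\[
\det(B^{\intercal}ZB)=\sigma_1^{2}\sigma_2^{2}\det(\Pi_B^{\intercal}Z\Pi_B)=\sigma_1^{2}\sigma_2^{2}\,z^{\intercal}(I-\Pi_B\Pi_B^{\intercal})z=\sigma_1^{2}\sigma_2^{2}\,(e_3^{\intercal}z)^{2}.
\]
Reading off which rank-$2$ projectors annihilate this quantity, treating the degenerate case $\sigma_2=0$ (where $B$ has rank $1$ and one instead asks when $X^{\intercal}B$ is nonzero) separately, and then verifying---using how the optimal Grassmannian point produced by \Cref{lem:f_tilde_um} sits relative to this locus in the setting of \Cref{thm:UPP32}---that the projectors which actually obstruct uniqueness are precisely those of the displayed exceptional form, is the step I expect to require the most care; everything before it is routine bookkeeping with the trace inequality and the matrix determinant lemma.
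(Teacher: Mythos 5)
Your handling of the equality \eqref{eq:stiefel_to_grassmannian} is essentially the paper's: reduce to orthogonal Procrustes via $\langle B, XU\rangle = \langle X^{\intercal}B, U\rangle$, apply Fan's theorem, and use $Z^{2}=Z$ to identify the singular values of $B^{\intercal}Z$ with those of $X^{\intercal}B$. The substantive divergence is your uniqueness criterion, and the uneasiness you voice in your last paragraph is justified, though not for the reason you suggest. You correctly argue that the maximizer of $\langle U, M\rangle$ over $O(2)$ is unique if and only if $\det M \neq 0$: writing $M = P\Sigma Q^{\intercal}$ with $s_{1}\ge s_{2}\ge 0$, both singular values positive forces $U = PQ^{\intercal}$; $s_{1}>s_{2}=0$ gives exactly the two maximizers $P\diag(1,\pm 1)Q^{\intercal}$; and equal nonzero singular values do \emph{not} spoil uniqueness (take $M=I$, whose unique maximizer over $O(2)$ is $U=I$). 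The paper's proof instead asserts uniqueness ``as long as the singular values of $X^{\intercal}B$ are distinct,'' which is the wrong criterion for $O(2)$, and the displayed exceptional set is precisely where $B^{\intercal}ZB$ is a \emph{nonzero} scalar matrix --- a case in which uniqueness in fact holds.

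Your computation $\det(B^{\intercal}ZB)=\sigma_{1}^{2}\sigma_{2}^{2}z_{3}^{2}$ is correct, so for $\sigma_{1}>\sigma_{2}>0$ the genuine non-uniqueness locus is the circle $\{I-zz^{\intercal} : z\in S^{2},\, z_{3}=0\}$, not the displayed finite set of projectors. As a concrete check: take $\sigma_{1}=2$, $\sigma_{2}=1$, $z=e_{1}$, and $X$ with columns $e_{2},e_{3}$. Then $Z=\diag(0,1,1)$ is not of the displayed form, yet $X^{\intercal}B$ has singular values $1$ and $0$, so two distinct $U\in O(2)$ attain the maximum and the lemma's conclusion fails. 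You should therefore not try to reconcile your exceptional set with the displayed one by invoking \Cref{lem:f_tilde_um} or the hypotheses of \Cref{thm:UPP32}; the lemma is a freestanding statement about $O(2)$-Procrustes on $\St^{3,2}$, and your criterion is the right one while the stated one is not. The appropriate repair is to replace the displayed exceptional set with $\{I-zz^{\intercal}:z_{3}=0\}$ (and to note that when $\sigma_{2}=0$ non-uniqueness is unavoidable), and then to re-examine how this propagates into the statement of \Cref{thm:UPP32}.
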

\begin{proof}
    We can once again apply Fan's theorem to state 
    \[
        \max_{U \in O(2)}\langle B, XU\rangle = 
        \max_{U \in O(2)}\langle X^{\intercal}B, U\rangle =
        \sigma_1(X^{\intercal}B) + \sigma_2(X^{\intercal}B).
    \]
    Note that because 
    \[
    (X^{\intercal}B)^{\intercal}X^{\intercal}B = 
    B^{\intercal}ZB = 
    (ZB^{\intercal})^{\intercal}ZB,
    \]
    we have that the singular values of $X^{\intercal}B$ are the same as those of $B^{\intercal}Z$, giving~\eqref{eq:stiefel_to_grassmannian}.

    Fan's theorem also implies that the optimal $U$ in~\eqref{eq:stiefel_to_grassmannian} is unique as long as the singular values of $X^{\intercal}B$ are distinct, which is true if and only if the eigenvalues of $B^{\intercal}ZB$ are distinct. Since $B^{\intercal}ZB$ is a $2\times 2$ matrix, the eigenvalues of this matrix are not distinct if and only if $B^{\intercal}ZB = cI$  for some $c \in \R$.
    Assume this holds.

    We can write $Z = I - zz^{\intercal}$ for a unique $z \in \PP^2$, from which it follows that
    \[
        \begin{pmatrix}\sigma_1 z_1\\ \sigma_2z_2\end{pmatrix}
        \begin{pmatrix}\sigma_1 z_1\\ \sigma_2z_2\end{pmatrix}^{\intercal} =
        \begin{pmatrix}
            \sigma_1^2 - c & 0 \\ 0 & \sigma_2^2 - c
        \end{pmatrix}.
    \]
    Since the left hand side is a rank one PSD matrix, the right hand side would need to be as well, i.e. $c = \sigma_2^2$, and we see that
    $z_1 = \pm \frac{\sqrt{\sigma_1^2 - \sigma_2^2}}{\sigma_1}$, $z_2 = 0$, and $z_3 = \pm \frac{\sigma_2}{\sigma_1}$.
    The conclusion of the theorem follows.
\end{proof}

\begin{proof}[Proof of \cref{thm:UPP32}]
    We have seen that if $(-BB^{\intercal},A)$ is well-positioned, then the function $f'(Z) = (\|B^{\intercal} Z\|_1, \langle A, Z\rangle)$ is uniquely maximized, i.e. for any $t \in \R$, the function
    \[
        \ell'_t(Z) = \|B^{\intercal} Z\|_1+t\langle A, Z\rangle
    \]
    has a unique maximizer $Z(t)$. 

    For any fixed $t$, if $X$ maximizes $\langle t, f(X)\rangle = t_0\langle B, X\rangle + t_1\langle A, XX^{\intercal}\rangle$, then \Cref{lem:stiefel_to_grassmannian} implies that $Z = XX^{\intercal}$ maximizes the linear function $t_0\|B^{\intercal}Z\|_1 + t_1\langle A, Z\rangle$. The uniqueness of the maximizer of this function implies that any maximizer of $\langle t, f(X)\rangle$ must satisfy $XX^{\intercal} = Z(t)$ and that 
    $X \in \argmax \{\langle B, X\rangle : XX^{\intercal} = Z(t)\}$. Thus, if we can show that $Z(t)$ is not of the form 
    \[
        I -
        \begin{pmatrix}
        \pm \frac{\sqrt{\sigma_1^2 - \sigma_2^2}}{\sigma_1}\\ 0\\\pm \frac{\sigma_2}{\sigma_1}
        \end{pmatrix}
        \begin{pmatrix}
        \pm \frac{\sqrt{\sigma_1^2 - \sigma_2^2}}{\sigma_1}\\ 0\\\pm \frac{\sigma_2}{\sigma_1}
        \end{pmatrix}^{\intercal}
    \]
    for any $t \in \R^2_{0+}$, then the theorem follows by \cref{lem:stiefel_to_grassmannian}.

    We note that we may write $Z(t) = I-z(t)z(t)^{\intercal}$ for each $t$, and so we equivalently want to show that $z(t) \neq \begin{pmatrix} \pm \frac{\sqrt{\sigma_1^2 - \sigma_2^2}}{\sigma_1}\\ 0\\\pm \frac{\sigma_2}{\sigma_1} \end{pmatrix}$ for any $t \in \R^2_{0+}$.
    
    We have seen that each $z(t)$ is a maximizer of the function $z^{\intercal}(t_0'BB^{\intercal} + t_1' A)z + 2\sigma_1\sigma_2|w^{\intercal}z|$ on $\PP^{n-1}$, for some $t' \in \R^2_{0+}$, and where $w$ is orthogonal to the image of $B$, and $\sigma_1, \sigma_2$ denote the singular values of $B$.

    To conclude, in order for this value of $z(t)$ to be a critical point for the function $z^{\intercal}(-BB^{\intercal} + t'_1A)z + 2\sigma_1\sigma_2|w^{\intercal}z|$, we have seen that there would need to be a solution to one of the two linear systems of equations
    \[
        (BB^{\intercal} + t' A + \lambda I)\begin{pmatrix} \pm \frac{\sqrt{\sigma_1^2 - \sigma_2^2}}{\sigma_1}\\ 0\\ \frac{\sigma_2}{\sigma_1} \end{pmatrix} + 2\sigma_1\sigma_2w.
    \]
    Thus, our assumption that there is no solution to these equations translates directly to the fact that for every $t$, there is a unique $X$ satisfying the conclusion of the theorem.
\end{proof}

\section{Algorithms for Riemannian Manifolds}\label{sec:alg-in-Riemannian-case}
Given a topological space $\cM$ and a function $f : \cM\rightarrow \R^{k+1}$ that
has a Lagrangian dual section $D(t)$, two algorithmic questions immediately arise from our study. First, we may ask whether it is possible to maximize the Lagrangian  $\langle t, f(x)\rangle$ for arbitrary $t \in \tDomain$. Second, we may ask whether it is possible to solve the constrained optimization problem \eqref{eq:constrained}, given an oracle for maximizing Lagrangians for arbitrary choices of $t$. 

For the first question, let us assume that we have access to the value of $D(t^{(0)})$ for some fixed $t^{(0)} \in \tDomain$, 
and that we want to find $D(t^{(1)})$ for some $t^{(1)} \neq t^{(0)}$. Given that $D(t)$ is a continuous function, it is natural to attempt a `path tracking' approach to solving this problem, in which we slowly adjust the parameter $t$ from $t_0$ to $t_1$, while repeatedly finding $D(t)$ using some local search procedure.

For the second question, let us assume that $f :\cM \rightarrow \R^{k+1}$ is sectioned and that we can find $D(t) \in \cM$ maximizing $\ell_t(x)$ for any $t \in \tDomain$. We want to find some $x^* \in \cM$ satisfying $f_1(x^*) = c_1, f_2(x^*) = c_2, \dots, f_k(x^*) = c_k$ and which maximizes $f_0(x^*)$, i.e. we want to find an $x^*$ which solves \ref{eq:constrained}. 
Indeed, we will see that it is at least possible to compute the value of $\Voptf(c)$ to arbitrary precision using oracle access to $D(t)$ for arbitrary values of $t$, using the ellipsoid algorithm. However, it is unclear whether we can recover the optimizer $x^*$ which witnesses the value of $\Voptf(c)$ in the same way, as we are only able to solve a `dual' problem, in a sense we will make precise in \Cref{subsec:ellipsoid}.

In order to get quantitative approximation guarantees for both the algorithmic tasks of finding $D(t)$ for $t \in \tDomain$ and solving \eqref{eq:constrained} given access to the value of $D(t)$,  
it will be necessary to be able to bound how quickly $D(t)$ changes depending on its input. The speed at which $D(t)$ changes depending on $t \in \tDomain$ will control how quickly we can make progress in a path tracking algorithm for computing $D(t)$, and also how closely we can approximate a solution to the constrained optimization problem \eqref{eq:constrained} using the ellipsoid algorithm.
To obtain such quantitative bounds, we will assume that the domain $\cM$ carries not merely the structure of a topological/metric space, but that of a \emph{Riemannian manifold.} This will allow us to control the Lipschitz parameter of the function $D(t)$ using calculus.

We will first provide the basic tools we will need from Riemannian geometry and optimization, based on the reference~\cite{boumal}, and in particular, we will state a result bounding the Lipschitz constant of $D(t)$ given control of certain condition numbers of the Riemannian Hessian of the Lagrangians.
In \Cref{subsec:CHORD}, we demonstrate an algorithm, which we call CHORD (recall that this is Continuous Homotopy Optimization with Riemannian Descent), that implements a path tracking methodology to solve Lagrangian dual problems.
In \Cref{subsec:ellipsoid}, we show how to make use of the ellipsoid algorithm to solve constrained problems of the form \eqref{eq:constrained} given access to an oracle solving the Lagrangian problems.
The CHORD method improves upon standard ideas from Riemannian optimization because it uses properties of the Lagrangian dual section to ensure that the output is close to a global optimizer.
By contrast, Riemannian gradient descent may become trapped in a local minimum with suboptimal objective value. 

Informally, CHORD is able to get global optimality guarantees because the Lagrangian dual section ensures that the global optima for nearby settings of the Lagrange multipliers are nearby. This implies if RGD run on an objective defined by $\langle t, f(x)\rangle$, and it is initialized at $D(t')$ for $t'$ close to $t$, then it will converge to $D(t)$. Formalizing this notion will require a manifold structure on $\cM$ and control over the Lipschitz parameter on $D(t)$.

\subsection{Background on Riemannian Optimization}
\label{subsec:riemannian}

In~\Cref{subsec:ellipsoid,subsec:CHORD}, we analyze two  algorithms that allow us to solve optimization problems with Lagrangian dual sections.
A shared requirement for these results is the need to quantify the continuity of the Lagrangian dual section $D$, i.e. its Lipschitz constant.

Recall that a \demph{smooth embedded submanifold} $\cM \subseteq \R^N$ of dimension $d$ may be defined as follows~\cite[Def 3.10]{boumal}: for every $x \in \cM$, there exists a neighborhood $U$ and a smooth function $g : \R^N \rightarrow \R^{N-d}$ so that $\cM \cap U = U \cap \{g(x) = 0\}$, and in such a way that the derivatives of coordinate functions $\nabla g_1(x), \dots, \nabla g_{N-d}(x)$ are linearly independent on $U$. When this holds, we denote by $T_x\cM$ the \demph{tangent space} of $\cM$ at $x$, defined as the $d$ dimensional vector space $\{y \in \R^N :\forall i = 1,\dots ,N-d,\; \langle \nabla g_i(x), y\rangle = 0\}$. 
Equipping each tangent space with the restriction of the standard inner product on $\mathbb{R}^N$ gives $\cM$ the structure of a \demph{Riemannian manifold.}
Our key examples $\cM_\lambda , \cM_\sigma ,$ and $\St^{n,m}$ are all embedded Riemannian submanifolds of Euclidean matrix spaces.
Some of the results that follow could be generalized to the setting of abstract Riemannian manifolds, e.g.~by invoking the famous Nash embedding theorem~\cite[Theorem 1, p52]{Nash1956-imbedding-problem}. 
Working with embedded manifolds affords us simpler descriptions of some technical notions from Riemannian geometry, such as the Riemannian Hessian defined below.

We will also assume that our function $f : \cM \rightarrow \R^{k+1}$ arises by restricting some smooth extension map $\overline{f}:\R^N \rightarrow \R^{k+1}$ to $\cM$. We may therefore also consider a smooth extension of the Lagrangian of $f$, $\overline{\cL} : \tDomainSmooth \times \R^N \rightarrow \R$ where $\overline{\cL}(t, x) = \langle t, \overline{f}(x)\rangle$. Finally, we will let $\overline{\ell_t}(x) = \overline{\cL}(t,x)$.

We may define the \demph{Riemannian gradient} of $\ell_t$,
\begin{align*}
\nabla_\cM \ell_t : \cM &\to T \cM , \\
\nabla_\cM \ell_t (x) &= P_x \nabla \overline{\ell_t} (x),
\end{align*}
a smooth section of the tangent bundle $T\cM = \sqcup_{x\in \cM} T_x \cM $ (i.e.~a \demph{vector field} on $\cM$). 
Here, the symbol $\nabla $ denotes the usual (Euclidean) gradient in $\R^n,$ and $P_x$ denotes orthogonal projection onto the tangent space $T_x \cM.$
As a $n\times n$ matrix,
\begin{align*}
P_x= I - \nabla g(x)^T \left( \nabla g(x)^T \nabla g (x) \right)^{-1} \nabla g (x),
\end{align*}
where $\nabla g (x): \R^n \to \R^s$ is usual (Euclidean) Jacobian of the local defining function $g$ at $x.$

The Riemannian gradient $\nabla_\cM \ell_t$ 
admits a \demph{smooth extension} $\overline{\nabla_\cM \ell_t} $ to a $N$-dimensional neighborhood $\cN \subset \R^N$ of $\cM$, i.e.~a smooth vector field $\overline{\nabla_\cM \ell_t}: \cN \to T \cN$ with $\overline{\nabla_\cM \ell_t} |_{\cM} = \nabla_\cM$.
By~\cite[Corollary 5.16]{boumal}, we have the \demph{Riemannian Hessian},
\begin{align}
\nabla_\cM^2 \ell_t (x)  : T_x \cM &\to T_x \cM \nonumber \\
v &\mapsto  P_x \cdot \nabla \overline{\nabla_\cM \ell _t} (x)  \cdot v.
\label{eq:riemannian-hessian}
\end{align}

For fixed $t,$ if $\ell_t$ is maximized at $x$, then we have a first derivative test~\cite[Prop.~4.5]{boumal},
\begin{equation}\label{eq:crit-points-t}
\nabla_\cM \ell_t (x) = 0,
\end{equation}
and a second derivative test~\cite[Prop.~6.3]{boumal},
\begin{equation*}\label{eq:crit-points-t2}
    \nabla_\cM^2 \ell_t (x) \preceq 0.
\end{equation*}

Now suppose $D$ is a Lagrangian dual section, and let $t^{(0)}, t^{(1)} \in \tDomain$. 
For simplicity, we will restrict our attention to the line segment joining $t^{(0)}$ to $t^{(1)}$, parameterized by $t(\lambda) = \lambda t^{(0)} + (1-\lambda)t^{(1)}$ for $\lambda \in \R$.

Abusing notation, let us write $D (\lambda ) = D\circ t (\lambda ).$
By the first derivative test, the following equations hold for all $\lambda \in [0,1]$ when $t^{(0)}$ and $t^{(1)}$ are sufficiently close:
\[
    \begin{pmatrix}
        \overline{\nabla_{\cM} \ell}_t(D(\lambda )\\
        g(D(\lambda ))
    \end{pmatrix} = 0.
\]
We now differentiate with respect to the scalar $\lambda $: writing $\ell_\lambda (x)= \ell_{t(\lambda )} (x),$
\[
    \begin{pmatrix}
        \nabla \overline{\nabla_{\cM} \ell_\lambda } \cdot D'(\lambda ) + \overline{\nabla_{\cM} \ell_\lambda }' (D(\lambda )) \\
        \nabla g  (D(\lambda )) \cdot D '(\lambda )
    \end{pmatrix} = 0,
\]
where the second equation is equivalent to $D'(\lambda ) \in T_{D(\lambda )} \cM.$

So long as the Riemannian Hessian is invertible, we have
\begin{equation}\label{eq:davidenko-manifold}
     D'(\lambda ) = -(\nabla_{\cM}^2 \ell_\lambda(D(\lambda )))^{-1}\cdot \nabla_{\cM} \ell_\lambda ' (D(\lambda )).
\end{equation}
Thus, $D(\lambda )$ in this case is (locally) the solution of a differential equation on $\cM $.
We use this observation to quantify the continuity of $D(t).$

\begin{lemma}
\label{lem:mean_value}
    With notation as above, let $s,t, \in \tDomainSmooth$, and $t(\lambda) = \lambda s + (1-\lambda) t$ for $\lambda \in [0,1]$.
    Supposing that for all $\lambda \in [0,1]$ we have both 
    \begin{equation}\label{eq:bound-hessian}
        \nabla_{\cM}^2 \ell_\lambda (D(\lambda )) \preceq \mu I, 
    \end{equation}
    and    \begin{equation}\label{eq:bound-gradient-derivative}
        \|\nabla_{\cM} \ell_\lambda  '(D(\lambda ))\| \le M,
    \end{equation}
   we then have 
    \begin{equation}\label{eq:bound-lagrangian-section-continuity}
        \|D(t) - D(s)\| \le \frac{M}{\mu}\|t-s\|.
    \end{equation}
\end{lemma}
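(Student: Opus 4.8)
The plan is to integrate the Davidenko-type differential equation \eqref{eq:davidenko-manifold} along the segment $\lambda \mapsto t(\lambda)$ and estimate the integral using the two hypotheses. Writing $D(\lambda) = D(t(\lambda))$ as in the discussion preceding the lemma, the first and main task is to promote the \emph{local} validity of \eqref{eq:davidenko-manifold} to the statement that $D$ is $C^1$ on all of $[0,1]$ --- recall we are only assuming $D$ is continuous. To this end, fix $\lambda_0 \in [0,1]$. Since $D(\lambda_0)$ maximizes $\ell_{\lambda_0}$, the second-order optimality test gives $\nabla_\cM^2 \ell_{\lambda_0}(D(\lambda_0)) \preceq 0$, and together with \eqref{eq:bound-hessian} this Hessian is negative definite, hence invertible, with $\bigl\|\bigl(\nabla_\cM^2 \ell_{\lambda_0}(D(\lambda_0))\bigr)^{-1}\bigr\|_{\op} \le 1/\mu$. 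The implicit function theorem applied to the critical-point equations $\nabla_\cM \ell_\lambda(x) = 0$, $g(x) = 0$ then yields a neighborhood of $\lambda_0$ and a unique $C^1$ branch $\lambda \mapsto x(\lambda)$ of solutions with $x(\lambda_0) = D(\lambda_0)$. Since $D$ is continuous and every $D(\lambda)$ is a global maximizer --- hence a critical point --- of $\ell_\lambda$, for $\lambda$ near $\lambda_0$ the point $D(\lambda)$ lies in the region where $x(\lambda)$ is the unique solution, so $D(\lambda) = x(\lambda)$ there. As $\lambda_0 \in [0,1]$ was arbitrary, $D \in C^1([0,1])$ and \eqref{eq:davidenko-manifold} holds on the whole interval.

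With $D$ now $C^1$ on $[0,1]$, I would conclude by the fundamental theorem of calculus. Since $t(0) = t$ and $t(1) = s$,
\[
D(s) - D(t) = \int_0^1 D'(\lambda)\, d\lambda, \qquad \text{hence} \qquad \|D(t) - D(s)\| \le \int_0^1 \|D'(\lambda)\|\, d\lambda .
\]
From \eqref{eq:davidenko-manifold},
\[
\|D'(\lambda)\| \;\le\; \bigl\|\bigl(\nabla_\cM^2 \ell_\lambda(D(\lambda))\bigr)^{-1}\bigr\|_{\op}\,\bigl\|\nabla_\cM \ell_\lambda'(D(\lambda))\bigr\| ,
\]
where the first factor is bounded by $1/\mu$ as above, and the second factor --- the $\lambda$-derivative of the Lagrangian gradient, which is linear in $t$ and so acquires the chain-rule factor $t'(\lambda) = s - t$ --- is at most $M\|t-s\|$ by \eqref{eq:bound-gradient-derivative}. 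Thus $\|D'(\lambda)\| \le \tfrac{M}{\mu}\|t-s\|$ uniformly in $\lambda$, and integrating over $[0,1]$ gives \eqref{eq:bound-lagrangian-section-continuity}. No Gr\"onwall-type argument is needed, since the hypotheses bound the relevant quantities uniformly along the known trajectory $D([0,1])$.

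I expect the genuine difficulty to lie entirely in the first paragraph: converting the local, informal ODE description of $D$ into a rigorous global $C^1$ statement without assuming more than continuity of $D$. The key point is the interplay between the implicit function theorem --- which needs invertibility of the Riemannian Hessian, supplied by \eqref{eq:bound-hessian} together with second-order optimality --- and the continuity of $D$, which is what selects the correct IFT branch. The remaining steps (the fundamental theorem of calculus and the operator-norm bookkeeping, including the placement of the $\|t-s\|$ factor coming from $t'(\lambda) = s-t$) are routine. One should also check that the smooth extensions $\overline{\ell_t}$ and $\overline{\nabla_\cM \ell_t}$ entering the definition of the Riemannian Hessian are defined on a fixed neighborhood of $\cM$ containing the curve $D([0,1])$, which is automatic since $D([0,1]) \subseteq \cM$.
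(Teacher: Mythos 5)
Your proof takes essentially the same approach as the paper's: differentiate the critical-point system along $t(\lambda)$ to obtain the Davidenko-type identity \eqref{eq:davidenko-manifold}, bound $\|D'(\lambda)\|$ by the product of the Hessian-inverse operator norm and the gradient-derivative norm, and integrate (you via the fundamental theorem of calculus, the paper via the mean value inequality --- equivalent here). The genuine added value of your write-up is the first paragraph: the paper's proof implicitly assumes $D$ is differentiable along the segment and that \eqref{eq:davidenko-manifold} holds globally on $[0,1]$, whereas you make this rigorous by invoking second-order optimality ($\nabla_\cM^2 \ell_\lambda(D(\lambda)) \preceq 0$) to get invertibility, then applying the implicit function theorem and using the continuity of $D$ to select the correct $C^1$ branch. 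That is exactly the right way to fill the gap the paper leaves informal (``$D(\lambda)$ in this case is (locally) the solution of a differential equation'').

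One point of bookkeeping to be aware of, which you inherited from the paper rather than introduced: as literally written, hypothesis \eqref{eq:bound-gradient-derivative} already bounds the $\lambda$-derivative $\nabla_\cM \ell_\lambda'(D(\lambda))$, which (being linear in $t'(\lambda) = s-t$) already absorbs the $\|t-s\|$ factor; so the clean FTC estimate would yield $\|D(t)-D(s)\| \le M/\mu$ with no extra $\|t-s\|$. The stated conclusion \eqref{eq:bound-lagrangian-section-continuity} carries an explicit $\|t-s\|$, which matches the paper's intent only under the reading that $M$ is a bound \emph{before} applying the chain-rule factor $s-t$ (i.e., $M$ bounds $\partial_t \nabla_\cM \ell_t(D(t))$ in operator norm). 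You adopt this second reading explicitly (``acquires the chain-rule factor $\ldots$ is at most $M\|t-s\|$''); the paper instead inserts $\|t-s\|$ in the mean-value step. Both are internally reconcilable, but neither matches the literal statement of \eqref{eq:bound-gradient-derivative}; it would be worth flagging this as an inconsistency in the hypotheses rather than silently re-interpreting $M$. Similarly, \eqref{eq:bound-hessian} as written ($\preceq \mu I$) does not, together with $\preceq 0$, give negative definiteness --- the intended hypothesis must be $\preceq -\mu I$, and you should state that explicitly rather than assert it follows ``together with'' the bound as given.
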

\begin{proof}
We apply the mean value inequality in Euclidean space and~\eqref{eq:davidenko-manifold}, 
\begin{align*}
\| D (s) - D(t) \| &\le   \|t-s\| \cdot \max_{\lambda \in [0,1]} \|D ' (\lambda ) \|\\
&\le  \|t-s\| \cdot  \displaystyle\max_{\lambda \in [0,1]} \| ( \nabla_\cM^2 \ell_\lambda (D(\lambda))
)^{-1}\|_{\text{op}} \, 
\|
\nabla_\cM \ell_\lambda ' (D(\lambda)) \|,
\end{align*}
where 
$\| \bullet \|_{\text{op}}$ denotes the operator norm on the appropriate tangent space.
    
\end{proof}
\subsection{A Path Tracking Algorithm for Solving Lagrangian Optimization Problems}
\label{subsec:CHORD}

Given $\hat{x}_t \approx D(t)$ and a sufficiently small $\epsilon >0,$ the method of Riemannian Gradient descent (RGD) can be used to refine $\hat{x}_t$ as an approximation of $D(s)$ when $\| s - t \| < \epsilon .$
We explain how to make this effective.
In general, RGD depends on the choice of a \emph{retraction} on $\cM$: this is a smooth map
\begin{align*}
R : T\cM &\to \cM \\
(x,v ) &\mapsto R_x (v) 
\end{align*}
such that every curve $c(\lambda ) = R_x (\lambda v)$ satisfies $c(0)=x$ and $c'(0)=v.$
More generally, $R$ may be defined on a neighborhood of $\cM$ in $T \cM .$

Fix a retraction $R$ on $\cM$. We may define the \emph{$R$-injectivity radius} $\operatorname{inj}_R (x)$ at a point $x\in \cM$ is the supremum over all radii $r>0$ such that $R_x$ restricts to a diffeomorphism on 
\begin{equation}\label{eq:ball-Bxr}
B(x , r) = \{ v \in T_x \cM \mid \| v \| < r \}.
\end{equation}
The inverse function theorem implies $\operatorname{inj}_R (x) > 0$---see~\cite[Corollary 4.17]{boumal}.

Fix $x\in \cM$, $r < \operatorname{inj}_R (x)$, and consider the neighborhood 
\[
U := R_x (B(x,r)) \ni x.
\]
For $i=1,2,$  take $x_i \in U,$ and 
$v_i = R_x^{-1} (x_i)$. 
A convexity condition holds:
    \begin{equation}
    \label{eq:Rx-convex-curve}
    R_x ((1-\lambda) v_1 + \lambda  v_2) \in U 
    \quad 
    \forall \lambda \in [0,1].
    \end{equation}
When $R$ is the \emph{exponential map} on $\cM,$ the quantity $\operatorname{inj}_R (x)$ reduces to the standard notion of the injectivity radius, simply denoted $\operatorname{inj} (x)$
The condition~\eqref{eq:Rx-convex-curve} is equivalent to the statement that $U$ is a \emph{geodesically convex set} (\cite[Definition 11.2]{boumal}.)
Furthermore, our lower bound on the Riemannian Hessian implies that $\ell_t$ restricts to a \emph{geodesically convex function} on $U$ (cf.~\cite[Theorem 11.23]{boumal}.)

Let $L$ be a Lipschitz constant for the Riemannian gradient (\cite[Corollary 10.47]{boumal}),
\begin{equation}\label{eq:Lipschitz-riemannian-gradient-constant}
L = 
\displaystyle\max_{\lambda \in [0,1]} \| \nabla_\cM^2 \ell_\lambda  (D(\lambda)) \|_{op} . 
\end{equation}
As long as we have
\begin{equation}\label{eq:epsilon-inj-bound-exponential}
    \epsilon < 
(1/2) \max_{\lambda \in [0,1]} \operatorname{inj} (D(\lambda )),
    \end{equation}
   RGD with the exponential retraction and a fixed step size $(1/L)$ converges at an explicit rate given in~\cite[Theorem 11.29]{boumal}. 
   Taking $x^{(0)}=\hat{x}_t$ with subsequent RGD iterates $x^{(1)}, \ldots , x^{(k)},$ it is enough that we have
   \[
   \operatorname{dist} (x^{(k)}, D(s)) 
\le 
\dist (x^{(0)}, D(s))
\sqrt{\kappa (1 - 1/\kappa )^k } 
\le 
 2 \epsilon
\sqrt{\kappa (1 - 1/\kappa )^k } 
< \epsilon ,
\]
where $\operatorname{dist}$ is the Riemannian distance on $\cM ,$
and $\kappa = L / \mu > 1$ is the Hessian condition number.
This bound holds provided that
\begin{equation}\label{eq:rgd-iterations-bound-exponential}
k >
\displaystyle\frac{\log (1 / \kappa  )}{\log (1 - 1/\kappa )}
>
\displaystyle\frac{\log (1 / (4 \kappa ) )}{\log (1 - 1/\kappa )}
   \end{equation}
RGD iterations.
We summarize this discussion in the following theorem.
\begin{theorem}\label{thm:homotopy-alg}
Let $D: \mathbb{R} \to \cM$ be the restriction to the segment $t(\lambda) \in \tDomainSmooth,$ $\lambda \in [0,1]$, of a Lagrangian dual section for a family of smooth objective functions $(\ell_t )_{t\in \tDomainSmooth}$ on a smooth submanifold $\cM\subset \R^n.$
The CHORD algorithm~\ref{alg:chord}, with the given hypotheses, produces an $\epsilon$-approximation of $D(1)$ with 
\begin{equation}\label{eq:total-gd-iters-exponential}
\left\lceil 
\displaystyle\frac{\mu M}{\epsilon} \cdot \left( 1 + 
\displaystyle\frac{ \log (\mu / L)}{\log (1 - \mu / L )} \right)
\right\rceil 
\end{equation}
RGD steps when the accuracy parameter $\epsilon $ satisfies ~\eqref{eq:epsilon-inj-bound-exponential}.
\end{theorem}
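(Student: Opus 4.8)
The plan is to read CHORD (Algorithm~\ref{alg:chord}) as a discretized homotopy / predictor--corrector method along the segment $t(\lambda)$: the ``predictor'' is reuse of the previous iterate, and the ``corrector'' is a warm-started run of Riemannian gradient descent (RGD). Two ingredients, both already assembled above, drive the analysis. First, \Cref{lem:mean_value}, which bounds the displacement of the section, $\|D(t)-D(s)\|\le (M/\mu)\|t-s\|$, uniformly along the segment. Second, the discussion preceding the theorem, which shows that near $D(\lambda)$ the function $\ell_{t(\lambda)}$ restricts to a geodesically strongly convex function on a neighborhood $U = R_{D(\lambda)}(B(D(\lambda),r))$ with $r$ below the injectivity radius, so that RGD with the exponential retraction and step size $1/L$ (with $L$ as in~\eqref{eq:Lipschitz-riemannian-gradient-constant}) converges geometrically with rate controlled by the Hessian condition number $\kappa = L/\mu$, and $k$ iterations suffice as soon as $k$ satisfies~\eqref{eq:rgd-iterations-bound-exponential}.

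Concretely I would proceed in three steps. \emph{Step 1 (Discretization).} CHORD partitions $[0,1]$ into $N$ equal subintervals $0=\lambda_0<\lambda_1<\cdots<\lambda_N=1$, where $N$ is chosen just large enough that \Cref{lem:mean_value} forces $\|D(\lambda_{j+1})-D(\lambda_j)\|\le\epsilon$ for every $j$; this determines $N$, hence the first factor in~\eqref{eq:total-gd-iters-exponential}. \emph{Step 2 (One corrector step, by induction on $j$).} Assume the current iterate $\hat x_j$ satisfies $\dist(\hat x_j, D(\lambda_j))\le\epsilon$. By Step 1 and the triangle inequality, $\dist(\hat x_j, D(\lambda_{j+1}))\le 2\epsilon$, and since $\epsilon$ obeys~\eqref{eq:epsilon-inj-bound-exponential} this puts $\hat x_j$ inside a geodesically convex neighborhood of $D(\lambda_{j+1})$ on which the bounds~\eqref{eq:bound-hessian} and~\eqref{eq:bound-gradient-derivative} are in force. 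Running RGD from $x^{(0)}=\hat x_j$ for the number of iterations prescribed by~\eqref{eq:rgd-iterations-bound-exponential} then gives $x^{(k)}$ with $\dist(x^{(k)},D(\lambda_{j+1}))\le 2\epsilon\sqrt{\kappa(1-1/\kappa)^k}<\epsilon$; put $\hat x_{j+1}=x^{(k)}$, closing the induction. \emph{Step 3 (Aggregate).} The base case is the initial approximation $\hat x_0$ of $D(0)$ supplied to CHORD; after $N$ corrector steps $\hat x_N$ is an $\epsilon$-approximation of $D(1)$. The total number of RGD steps is $N(k+1)$, where $k$ is the per-step iteration count from~\eqref{eq:rgd-iterations-bound-exponential}; substituting $\kappa = L/\mu$ into $\tfrac{\log(1/\kappa)}{\log(1-1/\kappa)} = \tfrac{\log(\mu/L)}{\log(1-\mu/L)}$ and the value of $N$ from Step 1 yields exactly~\eqref{eq:total-gd-iters-exponential}.

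The main obstacle is Step 2: one must verify that the warm start really lands in a region where the RGD convergence theory of~\cite[Theorem 11.29]{boumal} applies, i.e.\ that $2\epsilon$ is at once below the injectivity radius at $D(\lambda_{j+1})$ and small enough that the uniform bounds~\eqref{eq:bound-hessian}--\eqref{eq:bound-gradient-derivative} persist throughout the neighborhood $U$ (not merely at the point $D(\lambda_{j+1})$), and that the $\epsilon$-error does not compound across the $N$ steps. The hypothesis~\eqref{eq:epsilon-inj-bound-exponential} on $\epsilon$, together with the fact that $\mu$, $M$, $L$ are bounds valid along the entire segment $t(\lambda)$, is exactly what makes both of these points go through; the remaining verifications --- geodesic convexity of $\ell_{t(\lambda)}$ on $U$, the geometric decay of the RGD error, and the rounding in the iteration count --- are routine given the cited results and~\eqref{eq:davidenko-manifold}.
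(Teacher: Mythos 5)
Your proposal reconstructs exactly the argument the paper gives (the material from \eqref{eq:davidenko-manifold} and \Cref{lem:mean_value} through \eqref{eq:rgd-iterations-bound-exponential}, of which the theorem is stated as a summary): subdivide $[0,1]$ so the mean-value bound $\|D(t)-D(s)\|\le (M/\mu)\|t-s\|$ keeps consecutive section values within $\epsilon$, then for each subinterval warm-start RGD (with exponential retraction and step $1/L$) and invoke the geometric rate $\dist(x^{(k)},D(s))\le 2\epsilon\sqrt{\kappa(1-1/\kappa)^k}$ under the injectivity-radius condition \eqref{eq:epsilon-inj-bound-exponential}, so that \eqref{eq:rgd-iterations-bound-exponential} iterations restore the $\epsilon$-accuracy; multiplying the two counts gives \eqref{eq:total-gd-iters-exponential}. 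The only difference is presentational: you package the paper's running discussion as an explicit induction on the partition, which is a helpful reorganization but not a different proof.
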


\begin{remark}
As  $\kappa $ tends to infinity, Taylor approximation gives
\[
\displaystyle\frac{\log ( 1 / \kappa )}{\log (1- 1/\kappa ) } 
\approx 
\kappa \log (\kappa ), 
\quad 
\kappa \to \infty .
\]
Thus, the number of RGD steps in~\Cref{thm:homotopy-alg} is superlinear in the condition number.
\end{remark}

At a high-level,~\Cref{thm:homotopy-alg} shows that the global optimum of any sectioned problem on a Riemannian manifold can be computed algorithmically---at least when certain constants are known and when the exponential retraction is used.
\Cref{ex:upp-bunny} below shows that this approach also has practical benefits.

\begin{algorithm}
\caption{Continuous Homotopy Optimization with Riemannian Descent }\label{alg:chord}
\begin{algorithmic}
\Require \\
\begin{enumerate}
\item $D(\lambda )$, Lagrangian dual section for $\ell_t$ restricted to a segment $t(\lambda ),$ $\lambda \in [0,1]$
\item $\hat{x}_0\in \cM$ with $\operatorname{dist} (\hat{x}_0, D(0)) < \epsilon $
\item $\epsilon >0$ such that~\eqref{eq:epsilon-inj-bound-exponential} holds
\item $R$, the exponential retraction on $\cM$ 
\item $\mu ,$ $M$, and $L$ constants satisfying~\eqref{eq:bound-hessian},~\eqref{eq:bound-gradient-derivative},~\eqref{eq:Lipschitz-riemannian-gradient-constant}
\end{enumerate}
\Ensure $\hat{x}^{(1)}\in \cM$ with $\operatorname{dist} (\hat{x}^{(1)}, D(1)) < \epsilon $
 \State $\lambda \gets 0$
\While{$\lambda < 1$}
\State $k\gets 0$
\State $x^{(k)} \gets \hat{x}_{\lambda }$
\State $\lambda \gets \min ( \lambda + \epsilon / (\mu M), 1)$
\While{$k < 1 + \left\lceil 
\displaystyle\frac{ \log (4 (L / \mu ) )}{ \log (1 - \mu / L )}\right\rceil $ }
\State $k\gets k+1$
\State $x^{(k)} \gets R_{x^{(k)}} ( - (1/L) \nabla_{\cM}\ell_\lambda (x^{(k)}) ) $
\EndWhile
\State $\hat{x}_\lambda \gets x^{(k)}$
\EndWhile
\end{algorithmic}
\end{algorithm}

\begin{example}\label{ex:upp-bunny}
A natural test case for the CHORD algorithm is the UPP, which we have analyzed in detail for the $(n,m) = (3,2)$ case in~\Cref{sec:UPP}.

To recall, the UPP asks us to find, for a set of points represented by the columns of a matrix $U \in \R^{d \times n}$ and a corresponding set of points represented by $V \in \R^{d \times m}$, a rotation and projection which best maps $U$ onto $V$. In practice, $V$ is often a noisy version of a true projection of $U$ and the goal is to recover the rotation + projection matrix. Formally, we want to minimize $\|UX - V\|^2$ for $X \in \St^{n,m}$. 

In the case $(n,m) = (3,2)$, this can be visualized naturally; the columns of $U$ and $V$ can be plotted in $\R^3$ and $\R^2$ respectively, and we can see the result of a particular projection graphically. Pictorially, the goal is to find an orthogonal projection of the 3D object which best matches the 2D projection.

We can put this problem into our framework by noting that the minimizers of $\|UX - V\|^2$ are identical to the maximizers of $\langle A, XX^{\intercal}\rangle + \langle B, X\rangle$, where $A$ and $B$ are defined in \cref{lem:UPP_equivalent}. We can then define the parametric family of optimization problems $t\langle A, XX^{\intercal}\rangle + \langle B, X\rangle$, and think of this as being the Lagrangian associated to the function $f(X) = (\langle B, X\rangle, \langle A, XX^{\intercal}\rangle)$. Note that if $t = 0$, then the Lagrangian optimization problem is simply to maximize $\langle B, X\rangle$, which is achieved when $X= UV^{\intercal}$, where $B = U\Sigma V^{\intercal}$ is the singular value decomposition of $B$. Thus, in the case that $f$ is sectioned, we can apply our CHORD algorithm with $\hat{x}_0 = UV^{\intercal}$ with appropriate choices of the Lipschitz parameters.

In \cref{fig:CHORDExample}, we illustrate the output of our algorithm and compare that to the natural alternative of simply applying RGD directly to the objective. Note that in this example, RGD has found a critical point, which is not the global minimizer of the objective. By contrast, CHORD finds a projection which is a good approximation of the ground truth. Repeating this experiment with 100 random choices for the projection direction and the noise shows that in roughly 50\% of cases, RGD finds such a spurious critical point. 

\begin{figure}[!htb]
   \centering
   \includegraphics[width=1\linewidth]{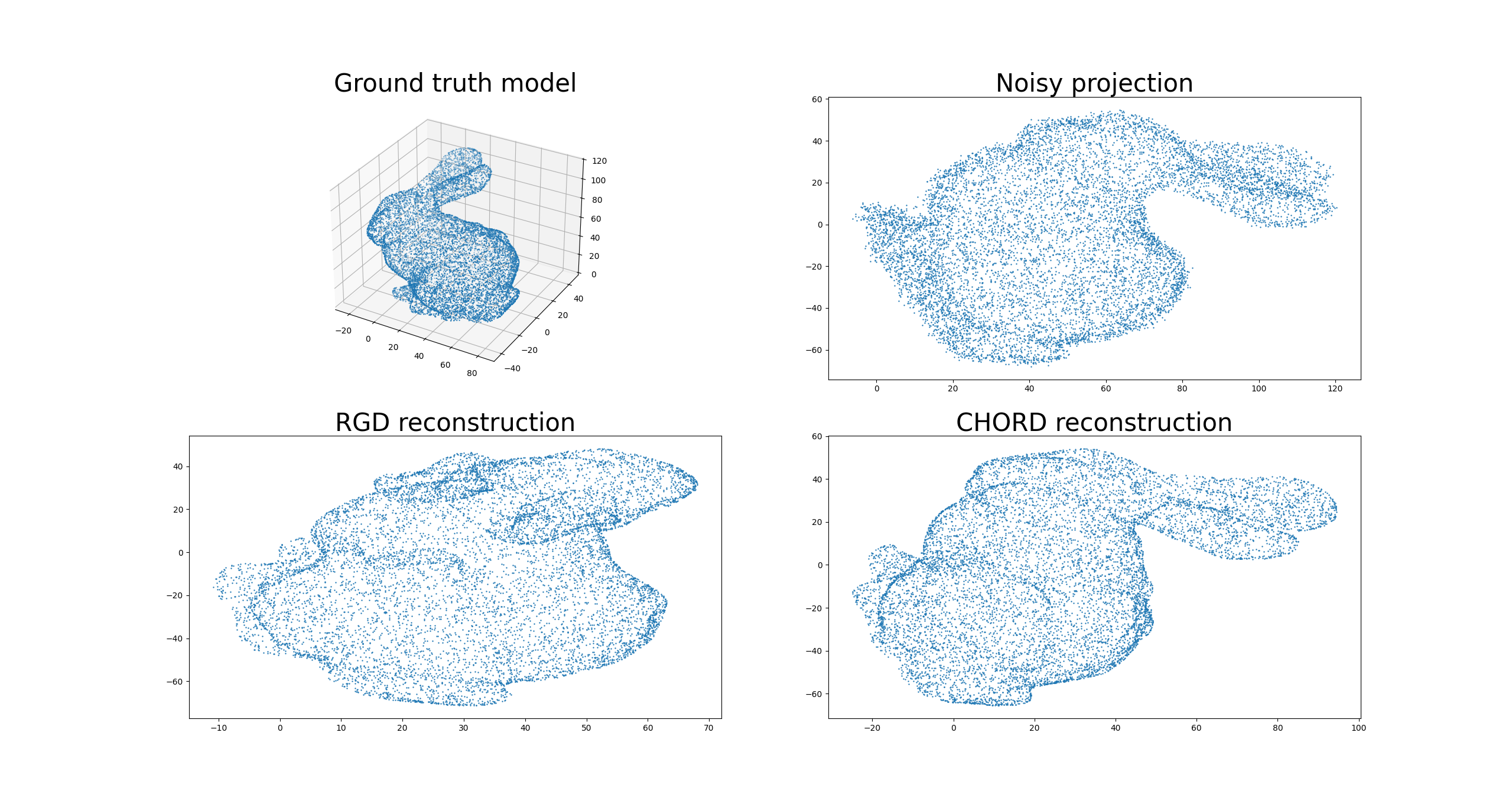}
   \caption{An example run of the CHORD algorithm. The top left shows a 3D data set, and the top right depicts a 2D projection of that data set where we have also added standard Gaussian noise. On the bottom left, we have a local minimum of the objective recovered by RGD with random initialization, and on the bottom right, we have the output of CHORD (using conservative estimates for the parameters). The 3D model is the Stanford bunny. For implementation of RGD, we used the package Pymanopt package~\cite{townsend2016pymanopt}, which uses a retraction based on $QR$ decomposition (a first-order approximation to the exponential retraction used in~\Cref{alg:chord}). The execution time of the CHORD algorithm with well tuned parameters is typically within a factor of 2 of the RGD algorithm.}
   \label{fig:CHORDExample}
\end{figure} 

\end{example}
\subsection{From Lagrangians to Constrained Optimization via the Ellipsoid Algorithm}
\label{subsec:ellipsoid}
Suppose that $f : \cM \rightarrow \R^{k+1}$ is sectioned, and let $\cC = \conv f(\cM)$. 
We have seen that the constrained optimization problem~\eqref{eq:constrained} may be reformulated as the convex optimization problem
\begin{equation}
\label{eq:primal}\tag{cr-Main}
    \max \{ y_0 : y_1 = c_1, \dots, y_k = c_k, y \in \cC\}.
\end{equation}
We will give a brief exposition about how to use the ellipsoid algorithm to solve a `dual' version of this problem, which only requires the ability to compute the value of $D(t)$ for some Lagrangian dual section of $f$; in particular, to solve the dual problem, no manifold assumption on $\cM$ is needed. It is only when we go to recover the actual primal solution to the optimization that we will need to assume that $\cM$ is a manifold.

If $0\in \cC$, then define the polar of $\cC$ as
\[
\cC^o := \{t \in \R^{k+1} : \forall y \in \cC, \;\langle t, y\rangle \le 1\}.
\]
Similarly, we define the \emph{dual optimization problem} to be 
\begin{equation}
\label{eq:dual_prob}\tag{opt-polar}
    \min \left\{\frac{1 - \sum_{i=1}^{k} c_i t_i}{t_0} : t \in \cC^o, t_0 > 0 \right\}.
\end{equation}

A consequence of
the bipolar theorem~\cite[Ch.~IV, (1.2)]{barvinok}
is that we have equality between \eqref{eq:primal} and \eqref{eq:dual_prob}, namely
\[
    \max \{y_0 : y_1 = c_1, \dots, y_k = c_k, y \in \cC\} = \min \left\{\frac{1 - \sum_{i=1}^{k} c_i t_i}{t_0} : t \in \cC^o, t_0 > 0\right\}.
\]
A \emph{separation} oracle for set $\cC^o$ is an algorithm, which on an input $t$ outputs either the fact that $t \in \cC^o$ or produces some $y \in \cC$ so that $\langle t, y \rangle > 1$.
We can see that having access to an oracle for computing $D(t)$ is enough to produce a separation oracle for $\cC^o$; on an input $t$, either output $t \in \cC^o$ if $\langle t, f(D(t)) \rangle \le 1$, and output $f(D(t))$ otherwise.
Given a separation oracle for a convex set, it is possible to approximately minimize an arbitrary convex function on that set in polynomial time using the ellipsoid algorithm, under some additional mild technical assumptions that we will not enumerate here---see e.g.~\cite[\S 5.2]{ben2001lectures}.

We now turn to the primal problem of finding $x \in \cM$ satisfying $f(x) = \begin{pmatrix} V_f(c) \\ c\end{pmatrix}$.
Indeed, if we could find an exact minimizer $t^*$ minimizing $\frac{1 - \sum_{i=1}^{k} c_i t_i}{t_0}$ over $\{t \in \cC^o : t_0 > 0\}$, then we have seen that $x^* = D(t^*)$ solves \eqref{eq:constrained}. However, in practice, we are not able to find an exact solution $t^*$, but rather some point $\hat{t}$ so that $\|t^* - \hat{t}\| \le \epsilon$ for an $\epsilon$ of our choosing. We would like to find a bound on $\epsilon$ such that the inequality $\|t^* - \hat{t}\| \le \epsilon$ implies that $\|D(t^*) - D(\hat{t})\| \le \delta$. The following lemma shows that when $\cM$ is a Riemannian manifold and $\ell_t$ satisfies some derivative bounds, we can obtain such a bound on $\|D(t^*) - D(\hat{t})\|$.

\begin{lemma}
    Suppose that $\cM \subseteq \R^N$ is a Riemannian manifold, and that $f : \R^N \rightarrow \R^{k+1}$ is smooth.
    Suppose that we are able to find $\hat{t} \in \Rplus^{k+1}$ satisfying $\|\hat{t} - t^*\| \le \epsilon$.
    Further suppose for all $t \in \Rplus^{k+1}$ with $\|t - \hat{t}\| \le \epsilon$,  that
    \[
        \nabla_{\cM}^2 \ell_t'(D(t)) \preceq \mu I,
    \]
    and
    \[
        \|\nabla_{\cM} \ell_t'(D(t))\| \le M.
    \]
    We then have that $\|D(\hat{t}) - D(t^*)\|\le \frac{M}{\mu}\epsilon$.
\end{lemma}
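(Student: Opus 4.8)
The plan is to obtain this lemma as an essentially immediate consequence of \Cref{lem:mean_value}, applied along the line segment joining $t^*$ to $\hat t$. I would set $s = \hat t$ and $t = t^*$ in that lemma and parametrize the segment by $t(\lambda) = \lambda \hat t + (1-\lambda) t^*$ for $\lambda \in [0,1]$. The whole content of the argument is then to check that the derivative bounds assumed here transfer to every point of this segment, so that hypotheses \eqref{eq:bound-hessian} and \eqref{eq:bound-gradient-derivative} of \Cref{lem:mean_value} are in force.

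First I would record the elementary containment: for every $\lambda \in [0,1]$,
\[
\|t(\lambda) - \hat t\| = (1-\lambda)\,\|t^* - \hat t\| \le \|t^* - \hat t\| \le \epsilon,
\]
so the entire segment lies in the $\epsilon$-ball about $\hat t$ on which the Hessian and gradient-derivative bounds are postulated; moreover, since $t^*_0 > 0$ and $t(\lambda) \ne 0$ along the segment, it stays inside the domain $\tDomain$ on which $D$ is defined and on which the implicit formula \eqref{eq:davidenko-manifold} is meaningful. Consequently the assumed bounds hold at each $t(\lambda)$, which are exactly the hypotheses of \Cref{lem:mean_value} with $s = \hat t$ and $t = t^*$. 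That lemma then yields \eqref{eq:bound-lagrangian-section-continuity}, namely
\[
\|D(\hat t) - D(t^*)\| \le \frac{M}{\mu}\,\|\hat t - t^*\|,
\]
and combining this with $\|\hat t - t^*\| \le \epsilon$ (and $M/\mu > 0$) gives $\|D(\hat t) - D(t^*)\| \le \frac{M}{\mu}\epsilon$, as claimed.

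I do not expect any genuine obstacle: the statement is a repackaging of \Cref{lem:mean_value} into the form needed for the ellipsoid-algorithm discussion, trading the abstract endpoints $s,t$ for the computed dual point $\hat t$ and the true dual optimizer $t^*$. The only points worth spelling out are the one-line segment containment above, and the fact that the differentiability of $D$ along the segment underlying \Cref{lem:mean_value}---in particular invertibility of the relevant Riemannian Hessian---persists here; this follows because the assumed bound \eqref{eq:bound-hessian}, i.e.\ $\nabla_{\cM}^2 \ell_{t(\lambda)}(D(t(\lambda))) \preceq \mu I$ with $\mu \ne 0$, holds all along the segment, so the derivation leading to \eqref{eq:davidenko-manifold} and the mean value inequality in the proof of \Cref{lem:mean_value} apply verbatim.
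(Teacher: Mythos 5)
Your proof is correct and takes essentially the same approach as the paper, which simply asserts that the lemma is an immediate consequence of \Cref{lem:mean_value}. You have made explicit the one thing worth checking---that the segment from $\hat t$ to $t^*$ lies entirely inside the $\epsilon$-ball around $\hat t$ where the derivative bounds are assumed---and then invoked \Cref{lem:mean_value} exactly as the paper intends.
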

\begin{proof}
    This is an immediate consequence of \cref{lem:mean_value}.
\end{proof}

\section{Discussion and Future Directions}
We have given a general topological framework for analyzing whether a (possibly nonconvex) optimization problem has a natural convex reformulation. Using this framework, we showed broad connections between algebraic topology and convexity, and demonstrated the effectiveness of our notions in the context of some canonical nonconvex optimization problems. In particular, we showed connections between some classic optimization problems, such as QCQPs and Stiefel manifold optimization, and algebraic concepts such as noncrossing subspaces and Lie group theory. We also give some new results on the unbalanced orthogonal Procrustes problem, and some novel algorithmic ideas for tackling that problem.

In the future, we would like to extend the applicability of these results.

\paragraph{Approximate convexity} Our results in this paper are only concerned with whether certain problems have exact convex reparametrizations. 
We do not expect to find exact convex reparametrizations for all applications of interest.
Thus, a future direction is to obtain
\emph{quantitative results} which control the \emph{approximation quality} of a convex relaxation.
For example, 
a first step would be 
to show how \cref{thm:hidden_cvx} can be generalized to the setting where the continuous function 
$D : \tDomain \rightarrow \cM$  only approximately maximizes the Lagrangian. Further steps can be taken by understanding classic approximation results for convex relaxations through the Lagrangian dual section~lens.

\paragraph{Convex hierarchies} A classic tool for strengthening a convex relaxation is to consider lifting the problem to a larger number of variables and then imposing more constraints. For example, the sum-of-squares hierarchy or the Sherali-Adams hierarchy are well-known approaches for this task. A natural question is to ask whether our approach has any implications for how to choose such an extension, and whether or not these topological notions are helpful for making such a choice.
\section{Acknowledgments}
Research by Rodriguez was partially supported by the Alfred P. Sloan Foundation.
Research by Chandrasekaran and Shu was partially supported by AFOSR grant FA9550-23-1-0070.

\bibliographystyle{plain}
\bibliography{references}
\appendix
\section{Proofs of Standard Lemmas}
\label{app:proofs}
Here, we will prove some lemmas whose proofs are well known in their respective fields, but may be unfamiliar to some readers.

\begin{proof}[Proof of \cref{cor:convex_reform}]
By \cref{thm:hidden_cvx}, it suffices to show that if $V_f$ is concave, then 
\[
    V_f(c) = \max \{y_0 : y_1 = c_1, \dots, y_k = c_k, x \in \conv f(\cM)\}.
\]

    Fix $c \in \R^k$.
    It is clear that since $f(\cM) \subseteq \conv(f(\cM))$, 
    \begin{align*}
        V_f(c) &= \max \{f_0(x) : f_1(x) = c_1, \dots, f_k(x) = c_k, x \in \cM\}\\
        &\le
        \max \{y_0 : y_1 = c_1, \dots, y_k = c_k, y \in \conv(f(\cM))\},
    \end{align*}
    and so to show that these are equal, it suffices to prove the reverse inequality.

    It is clear from the definition of $V_f$ that
    \[
        f(\cM) \subseteq \{y \in \R^{k+1}: y_0 \le V_f(y_1, \dots, y_k)\},
    \]
    where this second set is convex by the concavity of $V_f(c)$.
    Standard properties of the convex hull imply
    \begin{equation}
    \label{eq:convex_containment}
        \conv f(\cM) \subseteq \{y \in \R^{k+1}: y_0 \le V_f(y_1, \dots, y_k)\},
    \end{equation}
    and, since $\barf$ is a linear function,
    \[
        \conv(f(\cM)) = \barf(\conv(\cM)).
    \]
    This last equality together with \eqref{eq:convex_containment} implies that, for any $x \in \conv(\cM)$,
    \[
        \barf_0(x) \le V_f(\barf_1(x), \dots, \barf_k(x)).
    \]
    Thus, we conclude that
    \begin{align*}
        \max \{y_0 : y_1 = c_1, \dots, y_k = c_k, y \in \conv(f(\cM))\} \le
        V_f(c).
    \end{align*}

\end{proof}
\begin{proof}[Proof of \cref{lem:homotopy_fact}]
We will use some calculations at the level of homology. We will refer to \cite{MR1867354} broadly for a definition of homology, and \cite[Cor.~2.15]{MR1867354} specifically, which carries out an identical computation.

Suppose that $\phi$ is not surjective, and that $y \in B^k$ specifically is not in the image of $\phi$. We would then have the following diagram of continuous maps commutes:
\[\xymatrix{
        B^k \ar[r]^{\phi}
& B^k \setminus y\ar@<.5ex>@{.>}[d] \\
        S^{k-1}\ar[u] \ar@<.5ex>@{->}[r]^{\partial \phi} &
    S^{k-1} \ar@<.5ex>@{->}[u] \ar@<.5ex>@{.>}[l]
}\]
Here the dotted arrows indicate the existence of a homotopy inverse. This induces a diagram of the homology groups $H_{k-1}$:
\[\xymatrix{
        H_{k-1}(B^k) \ar[r]^{\phi_*}
& H_{k-1}(B^k \setminus y)\ar@<.5ex>@{.>}[d] \\
        H_{k-1}(S^{k-1})\ar[u] \ar@<.5ex>@{->}[r]^{\partial \phi_*} &
    H_{k-1}(S^{k-1}) \ar@<.5ex>@{->}[u] \ar@<.5ex>@{.>}[l]
}
\]
The dotted arrows corresponding to homotopy equivalences induce isomorphisms of homology groups.
Simplifying this diagram, we obtain
\[\xymatrix{
        0 \ar[r]^{\phi_*}
& \Z\ar@<.5ex>@{.>}[d] \\
        \Z\ar[u] \ar@<.5ex>@{->}[r]^{\partial \phi_*} &
    \Z \ar@<.5ex>@{->}[u] \ar@<.5ex>@{.>}[l].
}\]
If we look at the map induced from the bottom left corner to the top right by going clockwise around the square, we obtain the zero map, while going counterclockwise, we obtain an isomorphism, which is a contradiction.

\end{proof}
\end{document}